\documentclass[12pt,a4paper,reqno]{amsart}

\usepackage[top=35mm, bottom=35mm, left=30mm, right=30mm]{geometry}
 
\usepackage{array}
\usepackage{booktabs}
\usepackage{microtype}
\usepackage[T1]{fontenc}
\usepackage[hypertexnames=false,colorlinks=true,linkcolor=blue,citecolor=blue,urlcolor=blue]{hyperref}

\usepackage{amsfonts}
\usepackage{amssymb}
\usepackage{amsmath}
\usepackage{amsthm}
\usepackage{mathtools}
\usepackage{enumitem}
\usepackage{mathptmx}
\usepackage{eucal}

\newtheorem{thm}{Theorem}[section]
\newtheorem{coro}[thm]{Corollary}
\newtheorem{prop}[thm]{Proposition}
\newtheorem{lem}[thm]{Lemma}
\newtheorem{defn}[thm]{Definition}
\newtheorem{ques}[thm]{Question}
\theoremstyle{remark}
\newtheorem{rem}[thm]{Remark}
\newtheorem{ex}[thm]{Example}
\numberwithin{equation}{section}

\newtheorem{innercustomthm}{ }
\newenvironment{customthm}[1]
  {\renewcommand\theinnercustomthm{#1}\innercustomthm}
  {\endinnercustomthm}

\newcommand{\N}{\mathbb N}
\newcommand{\Z}{\mathbb Z}
\newcommand{\Q}{\mathbb Q}
\newcommand{\R}{\mathbb R}
\newcommand{\B}{\mathcal B}
\newcommand{\I}{\mathcal I}

\newcommand{\1}{\mathbf 1}
\newcommand{\e}{\mathrm e}
\newcommand{\dd}{\,\mathrm d}
\newcommand{\RME}{\mathrm{RME}}

\title[Sharp Orlicz Endpoints for Spatial-Temporal  Ergodic Averaging]{Sharp Orlicz Endpoints for Spatial-Temporal \\ Ergodic Averaging}

\author[J.~Li]{Jie Li}
\address[Jie Li]{School of Mathematics and Statistics, Jiangsu Normal University, Xuzhou, Jiangsu, 221116, P.R. China}
\email{jiel0516@mail.ustc.edu.cn}

\date{\today}

\subjclass[2020]{37A30, 42B25, 46E30}
\keywords{Ergodic averages along subsequences, spatial--temporal averaging, Orlicz endpoints, restricted maximal estimates, local $\infty$-sweeping out.}

\begin{document}

\begin{abstract}
We study the composition of temporal ergodic averaging with spatial averaging over shrinking metric balls, and determine sharp Orlicz endpoints for the corresponding  unrestricted joint limit. For consecutive Birkhoff averages normalized by $N\Lambda_q(N)$ ($q\ge 0$), the sharp Orlicz endpoint is $L\log_{q+1}L$.  
In particular, the ordinary case $q=0$ yields an $L\log L$ local joint convergence theorem under the Lebesgue differentiation property alone, while $L^1$ fails even on
the Euclidean interval, answering two questions of Young.  The same $L\log_{q+1}L$ endpoint holds for prime averages for every $q\ge 1$.
For arbitrary time sequences, $L\log_qL$ always suffices at the same normalization $N\Lambda_q(N)$ ($q\ge 1$), and this endpoint is sharp in the Orlicz
sense for fixed-base exponential sequences $k^n$ ($k\ge 2$) and for sequences with polynomial ratio separation, including $n!$.
 
The positive results rest on a local stability principle: under the ball Lebesgue differentiation property, pointwise temporal convergence lifts to the local joint limit whenever the associated temporal maximal function admits an $L^1$ majorant. The additional logarithm in the regular-time case comes from lifting restricted logarithmic maximal estimates from sets to general functions. The arbitrary-sequence theorem uses a dyadic decomposition instead. The lower bounds are local $\infty$-sweeping out constructions, obtained by weighted local sweeping out for polynomial-growth regular times, by residue constructions for polynomially ratio-separated sequences, and by digit
constructions for fixed-base exponentials. 
\end{abstract}

\maketitle

\section{Introduction}

Throughout the paper a \emph{metric probability space} means a triple $(X,d,\mu)$, where $(X,d)$ is a metric space and $\mu$ is a Borel probability measure.  
We write $\B_X$ for the  Borel $\sigma$-algebra of $X$. 
 A \emph{measure-preserving system} over $(X,d,\mu)$ is a quadruple $(X,\B_X,\mu,T)$, where $T:X\to X$ is Borel measurable and satisfies $\mu(T^{-1}A)=\mu(A)$ for every $A\in\B_X$.  

\subsection{The local observation problem and motivations}
Let $(X,\B_X,\mu,T)$ be a measure-preserving system. For $h\in L^1(\mu)$, write the spatial average
\[
        \mathcal L_rh(x)
        =
        \frac{1}{\mu(B(x,r))}\int_{B(x,r)}h\,\dd\mu.
\]
Only balls of positive measure are considered, and this convention will not be repeated. 
For the Birkhoff averages
\[
        A_Nf(x)=\frac{1}N\sum_{j=0}^{N-1}f(T^jx),
\]
the pointwise ergodic theorem describes the limiting behaviour of $A_Nf(x)$ at a fixed  point. 
However, in applications one typically observes not an isolated point but a small neighbourhood of it.  
Thus it is natural to consider the compositional behavior of the temporal average observed within a small spatial neighbourhood:
\begin{equation*}\label{eq:intro-local-observation}
        \mathcal L_r(A_Nf)(x) = \frac{1}{\mu(B(x,r))} \int_{B(x,r)} A_Nf(y)\dd \mu(y).
\end{equation*}
The local observation problem asks whether the temporal limit $N\to \infty$ and the spatial differentiation limit $r\downarrow 0$ are compatible in a joint sense, with no relation imposed between the temporal scale and the spatial radius. 

This local observation problem connects three closely related lines of research. The first pertains to the work of Assani and Young \cite{AssaniYoung22-1}, who introduced the so-called ``spatial--temporal differentiation'' problem for temporal averages integrated over shrinking measurable spatial sets. 
Young later considered related questions for locally compact group actions and other spatial families \cite{Young23-1, Young23-2}.  More recently, Young \cite{Young25} proved a ball version for $L^p$ ($p>1$) functions under a weak type $(1,1)$ Hardy--Littlewood maximal inequality for the metric balls, and asked
whether this hypothesis can be replaced by the Lebesgue differentiation property and whether the result extends to $L^1$, see \cite[Questions~9 and~10]{Young25}. 
The ordinary Birkhoff part of our \ref{thm:regular-endpoints} answers both these questions: the Lebesgue differentiation property is sufficient on $L\log L$, while $L^1$ is not sufficient even on $[0,1]$ with its Euclidean metric.

The second  concerns  the rate problem for ergodic sums along prescribed times.    
Quas and Wierdl~\cite{QuasWierdl} studied iterated-logarithmic denominators as the natural scale for universal pointwise growth of ergodic sums along arbitrary time sequences, and obtained sharp results at this scale.
These results are pointwise. Under local observation in the present paper, the same denominators lead to additional difficulties on both sides of the problem.
On the positive side, almost everywhere control of the normalized sums is not enough. To pass through the spatial averaging step,  one needs an integrable majorant for the corresponding temporal supremum. 
This is the role of the restricted logarithmic maximal estimates introduced below.
On the negative side, the pointwise examples of Quas and Wierdl~\cite{QuasWierdl} do not directly give local counterexamples, since large values along one orbit may disappear after averaging over shrinking balls.   Sharpness under local observation therefore requires bad behaviour to persist at arbitrarily small spatial scales. For fixed-base exponentials $k^n$, the cyclic-word construction of Quas and Wierdl \cite{QuasWierdl} serves as the pointwise model, but the local result requires a separate spatial realization.

The third relates the Orlicz endpoint problem for subsequence ergodic averages.   
Perturbation methods of Bellow~\cite{Bellow1989}, Reinhold-Larsson~\cite{ReinholdLarsson1994}, and Wierdl~\cite{Wierld98}
provide a flexible way of constructing subsequences with prescribed convergence and divergence behavior. Parrish~\cite{Parrish2011} adapted this method to treat endpoints in  Orlicz classes, constructing sequences which are good on a prescribed Orlicz class but sweeping out in larger spaces.  
On the negative side, strong sweeping out constructions form the standard endpoint obstruction, and particularly the strong sweeping out property for lacunary sequences and related operators was developed systematically by Akcoglu, Bellow, Jones, Losert, Reinhold--Larsson and Wierdl~\cite{ABJLRW}.
The present paper addresses the Orlicz endpoint problem under local observation. The lower bounds are in the same Orlicz endpoint spirit, but the form of failure is different: 
it is local $\infty$-sweeping out rather than a purely temporal sweeping out statement, and large values must still be seen after the temporal sums have been averaged over shrinking balls.

Prime averages provide a useful Orlicz endpoint test case for the positive theory.  Wierdl \cite{Wierdl88} established pointwise convergence of prime averages for $L^p$ ($p>1$) functions, while LaVictoire \cite{LaVictoire} proved that the primes are universally $L^1$-bad.  
Trojan  \cite{Trojan} later obtained Orlicz endpoint estimates for the prime maximal function.  Motivated by this, we introduce below a restricted logarithmic maximal estimate adapted to the iterated-logarithmic denominators.  
The restricted indicator form of Trojan's estimate implies this condition for the primes.  After logarithmic lifting from indicators to general functions, it gives the $L^1$ temporal maximal majorant needed for local observation, although the ordinary prime maximal operator is not uniform weak type $(1,1)$ over  measure-preserving systems.

\subsection{Main results}

For $q\in\N $ and $t\ge 0$, define the regularized iterated logarithms by
\[
L_1(t)=\log(\e+t),\quad L_{q+1}(t)=\log\bigl(\e+L_q(t)\bigr),
\]
and set
\[
\Lambda_0(t)=    1, \quad \Lambda_q(t)=\prod_{j=1}^{q}L_j(t).
\]
Given a finite-valued Young function $\Phi$, we write $L^\Phi$ for the corresponding Luxemburg Orlicz space. Define
\[
        \Psi_q(t)=\int_0^t (1+L_q(u))\dd  u ,
\]
and write $ L\log_qL=L^{\Psi_q}.$ Then $L\log_1L$ is the usual Zygmund class $L\log L$, and $\Psi_q(t)$ is comparable to $t(1+L_q(t))$.

Let $(X,d,\mu)$ be a metric probability space. 
We say that  $(X,d,\mu)$ satisfies the \emph{Lebesgue differentiation property} (LDP), if for every $h\in L^1(\mu)$,
\[
        \lim_{r\downarrow 0} \mathcal L_r(h)(x)=h(x), \quad  a.e. \ x\in X.
\]
Fix a nonnegative integer sequence $\mathbf a=(a_n)_{n\ge 1}$ and a positive normalizing sequence $\mathbf D=(D_N)_{N\ge 1}$. For a given
 measure-preserving system $(X,\B_X,\mu,T)$, a  Young function $\Phi$ and an $f\in L^\Phi $, define 
\[
         A_N^{\mathbf a,\mathbf D}f(x)= \frac{1}{D_N}\sum_{n=1}^N f(T^{a_n}x)
\]
for $x\in X$.  We say that the averages $ A_N^{\mathbf a,\mathbf D}f$ converge \emph{locally stably} if there is $f^* \in L^1(\mu)$ such that 
\[ 
\lim_{\substack{r\downarrow  0, N\to \infty}} \mathcal L_r( A_N^{\mathbf a,\mathbf D}f)(x)=f^*(x),  \quad  a.e. \ x\in X.
\]
Here and below, all limits involving $r\downarrow 0$ and $N\to \infty$ are understood as joint limits.
When the underlying metric probability space has the LDP, local stable convergence implies ordinary pointwise convergence to the same limit 
(Lemma~\ref{lem:local-stability-consequences}).

We say that the pair $(\mathbf a,\mathbf D)$ admits an \emph{$L^\Phi$ local $\infty$-sweeping out} counterexample on  $[0,1]$ if there exist an
invertible ergodic Lebesgue measure-preserving transformation $T$ of $[0,1]$, a nonnegative $f\in L^\Phi([0,1])$, and a measurable set
$Y\subset[0,1]$ with $\lambda(Y)>0$, such that
\begin{equation*}\label{eq:local-infty-sweeping out} 
  \limsup_{\substack{r\downarrow 0, N\to \infty}} \mathcal L_r\bigl( A_N^{\mathbf a,\mathbf D}f\bigr)(x) =\infty, \quad \text{  a.e. }x\in Y.
\end{equation*}
In fact, the lower-bound theorems in the present paper are established in the following stronger almost-full-measure form: for every fixed $\Phi$ and $0<\varepsilon<1$, the objects  $T,f,Y$ in the counterexample can be chosen so that $\lambda(Y)>1-\varepsilon$.
By the Lebesgue differentiation theorem, the interval $[0,1]$ equipped with the standard Euclidean metric and Lebesgue measure satisfies the LDP, see \cite[Chapter I, Section 1]{Stein}.  Thus, these counterexamples occur within the same class of metric probability spaces as the positive results.

Let $s\in\N$. We call $L\log_sL$ the \emph{sharp Orlicz endpoint for the local observation problem associated with $(\mathbf a,\mathbf D)$} if the following two
conditions hold:
\begin{enumerate}
\item[(a)]
For every measure-preserving system whose underlying metric probability space has the LDP, and every $f\in L\log_sL$, the averages $ A_N^{\mathbf a,\mathbf D}f$ converge locally stably.

\item[(b)]
For every  Young function $\Phi$ satisfying
\[
        \Phi(t)=o\bigl(tL_s(t)\bigr) \quad (t\to \infty),
\]
the pair $(\mathbf a,\mathbf D)$ admits a local $\infty$-sweeping out counterexample in $L^\Phi$ on the Lebesgue interval.
\end{enumerate}

For $q\in\N_0$, define
\[
        \mathbf D^{(q)}= (N\Lambda_q(N))_{N\ge 1}
\]
The present paper focuses primarily on this family of normalizations.

Our first main result addresses the consecutive and prime time sequences.
 
\begin{customthm}{\bf Theorem A} \label{thm:regular-endpoints}
{\it
\begin{enumerate}[label=\textup{(\roman*)}, widest=(ii), leftmargin=0.2cm, align=left, labelsep=3pt]
\item \label{thm:regular-endpoints:1}
For consecutive times $a_n=n-1$ and every $q\in\N_0$,  $L\log_{q+1}L$ is the sharp Orlicz endpoint for the local observation problem 
associated with $(\mathbf a,\mathbf D^{(q)})$.

The locally stable limit is $\mathbb E(f\mid\I)$ when $q=0$, and $0$ when $q\ge 1$,
where $\I$ denotes the $T$-invariant $\sigma$-algebra.

\item\label{thm:regular-endpoints:2}
For prime times $a_n=p_n$ and every $q\in\N$, $ L\log_{q+1}L$ is the sharp Orlicz endpoint for the local observation problem associated with $(\mathbf a,\mathbf D^{(q)})$, and the locally stable limit is 0.
\end{enumerate}
}
\end{customthm}

\ref{thm:regular-endpoints} rests on two general principles developed in this paper.
The positive direction is given by a general local stability principle of Theorem~\ref{thm:local-stability-principle}, which shows that under the LDP, pointwise convergence becomes locally stable once the temporal maximal function admits an integrable majorant.
To verify this condition we introduce a restricted maximal estimate of level $q$ (RME$_q$), a logarithmic analogue of weak type $(1,1)$ (Definition \ref{def:RME}).
For consecutive averages, RME$_q$ follows  from the classical weak type $(1,1)$ inequality (Proposition \ref{prop:weak-RME}).
For prime averages, it is derived from Trojan's restricted endpoint estimate (Proposition \ref{prop:polylog-RME}). A critical logarithmic lifting argument in Proposition \ref{prop:RME-lift} then yields the required maximal majorant.

The negative direction is governed by the weighted local sweeping-out principle of Theorem~\ref{thm:weighted-sweeping}, which provides a general principle for constructing local $\infty$-sweeping-out counterexamples.  For polynomial-growth regular times, it yields lower bounds matching the positive endpoint theorem
described above.  The two directions together identify the sharp Orlicz endpoints in  \ref{thm:regular-endpoints}.

\medskip
The second main result gives the universal positive theorem and the sharp Orlicz endpoint for particular sparse sequences.

\begin{customthm}{\bf Theorem B}\label{thm:universal-sparse}
{\it 
Let $q\in\N$.
\begin{enumerate}[label=\textup{(\roman*)},leftmargin=*]
\item \label{thm:universal-sparse:1}
For every sequence of nonnegative integers $(a_n)$, every measure-preserving system  $(X,\B_X,\mu,T)$  and every $f\in L\log_{q}L$,
we have
\[
        \frac{1}{N\Lambda_q(N)}\sum_{n=1}^{N} f(T^{a_n}x) \to 0 \quad\text{ a.e. } x\in X.
\]
Moreover, if the underlying metric probability space has the LDP, then this convergence is locally stable.

\item \label{thm:universal-sparse:2}
For denominator $N\Lambda_q(N)$, $ L\log_{q}L$ is the sharp Orlicz endpoint for the local observation problem, for each of the following families of time sequences:
\begin{enumerate}[label=\textup{(\alph*)},leftmargin=*]
\item \label{thm:universal-sparse:2:1} sequences satisfying
\[
        \frac{a_{n+1}}{a_n}\ge n^\eta
\]
for all sufficiently large $n$ and for some $\eta>0$, such as
$a_n=n!$;

\item \label{thm:universal-sparse:2:2} fixed-base exponential sequences $a_n=k^n$ for $k\ge 2$.
\end{enumerate}
\end{enumerate}
}
\end{customthm}

Via the local stability principle, once the relevant almost-everywhere temporal convergence is known, the spatial step in the positive parts of
~\ref{thm:regular-endpoints} and \ref{thm:universal-sparse} is reduced to proving $L^1$-integrability of the corresponding temporal maximal function.  The
local stability principle itself introduces no logarithmic loss.  The one-logarithm difference between the two endpoint scales enters at the stage
where this maximal integrability is established.

For the regular-time results, restricted logarithmic maximal estimates are lifted from indicator functions to general functions, and this lifting produces the
$L\log_{q+1}L$ scale (Proposition \ref{prop:RME-lift}).  For arbitrary time sequences, the normalization
already supplies the required logarithmic summability.  A weighted dyadic truncation argument gives the maximal $L^1$ bound directly (Lemma \ref{lem:positive-weighted-dyadic}), without passing through the restricted lifting step, and leads instead to the $L\log_qL$ scale (Theorem \ref{thm:universal-positive}).

The lower-bound theory for~\ref{thm:universal-sparse} has a different structure.  
The weighted local sweeping-out principle provides a common framework for passing from suitable finite-stage configurations to local $\infty$-sweeping out (Theorem \ref{thm:finite-stage-lower}), but it does not by itself produce the required finite configurations. 
Therefore,  a key step is to formulate a finite-stage hypothesis which separates the local dynamical realization from the combinatorics of the time sequence (Definition \ref{def:finite-stage-hypotheses}).  Polynomially ratio-separated sequences satisfy this hypothesis through residue constructions (Proposition \ref{prop:poly-ratio-stages}), whereas fixed-base exponential sequences require digit constructions adapted to the underlying base (Proposition \ref{prop:exponential-stages}).  Once these
sequence-specific finite models have been obtained, they are embedded into the weighted local sweeping-out framework.


\medskip
The paper is organized as follows. Section~\ref{sec:preliminaries} collects the preliminary materials, including the interval-realization tools and the local stability principle. Section~\ref{sec:Llogqp1} proves the sharp $L\log_{q+1}L$ Orlicz endpoint results for consecutive and prime averages, together with the polynomial-growth lower bounds. Section~\ref{sec:Llogq} proves the universal $L\log_qL$ Orlicz endpoint theorem for arbitrary time sequences and the matching lower results for polynomially ratio-separated and fixed-base exponential sequences.

\subsection*{Acknowledgments} 
The research of Jie Li is supported by NNSF of China (Grant No. 12031019).

\section{Notation and preliminary facts}\label{sec:preliminaries}

Throughout this paper,  $\N$, $\N_0$, $\Q$ and $\mathbb{R}$ represent the sets of  positive integers, nonnegative integers, rational numbers and real numbers, respectively. We use $\log$  to denote the natural logarithm. 

For two nonnegative quantities $A$ and $B$, the notation $A\lesssim B$ means that $A\le CB$ for a positive absolute
constant $C$.  If the implicit constant may depend on a fixed parameter $q$, we write $A\lesssim_q B$.  If both
$A\lesssim_q B$ and $B\lesssim_q A$ hold, we denote $A\asymp_q B$.

\subsection{Measure-theoretic conventions}\label{subsect:lebesgue}

In this paper, all equalities between measurable sets, all invariance statements, and all equalities between measurable maps are understood modulo null sets. 
 
Let $(X,\B_X,\mu)$ and $(Y,\B_Y,\nu)$ be probability spaces.  A \emph{measure isomorphism} from $(X,\B_X,\mu)$ to
$(Y,\B_Y,\nu)$ is a bi-measurable bijection
\[
        \phi:X_0 \to  Y_0
\]
between full-measure measurable sets $X_0\subset X$ and $Y_0\subset Y$, equipped with the restricted $\sigma$-algebras, such that
\[
        \phi_*(\mu|_{X_0})=\nu|_{Y_0}.
\]
If such a map exists, the two probability spaces are said to be \emph{measure-isomorphic}.

A measurable set $A$ with $\mu(A)>0$ is an \emph{atom} if every measurable $B\subset A$ has measure $0$ or $\mu(A)$.  The measure $\mu$ is
\emph{nonatomic} if it has no atoms.  A \emph{Lebesgue probability space} is a complete probability space which is measure-isomorphic to the disjoint union
of an interval with Lebesgue measure and at most countably many atoms. We shall use the following standard classification of nonatomic Lebesgue
probability spaces: every nonatomic Lebesgue probability space is measure-isomorphic to $([0,1],\B^\lambda_{[0,1]},\lambda)$, 
where $\B^\lambda_{[0,1]}$ denotes the Lebesgue completion of the Borel $\sigma$-algebra, see \cite[Theorem~17.41]{Kechris1995}.
Consequently, if $E\subset[0,1]$ is a Borel subset and $\lambda(E)>0$,
then $\left(E,\B^\lambda_{[0,1]}|_E,  \frac{\lambda|_E}{\lambda(E)}\right)$ is a nonatomic Lebesgue probability space, where $\B^\lambda_{[0,1]}|_E=\{B\cap E:B\in\B^\lambda_{[0,1]}\}$,  and hence is  measure-isomorphic to $ ([0,1],\B^\lambda_{[0,1]},\lambda)$.

Recall that a \emph{measure-preserving system} is a quadruple $(X,\B_X,\mu,T)$, where $(X,\B_X,\mu)$ is a probability
space and $T:X\to X$ is measurable with $T_*\mu=\mu$.  We say that the system is \emph{ergodic} if every $A\in\B_X$ satisfying
$T^{-1}A=A$ has $\mu(A)\in\{0,1\}$.  Two measure-preserving systems $(X,\B_X,\mu,T)$ and $(Y,\B_Y,\nu,S)$ are
\emph{isomorphic} if there are invariant full-measure sets $X_0\subset X$ and $Y_0\subset Y$, and a measure isomorphism $\phi:X_0\to  Y_0$ such that
\[
        \phi\circ T=S\circ \phi \quad\text{on } X_0.
\]
Finally, the system $(X,\B_X,\mu,T)$ is \emph{invertible} if $T$ is a measure-preserving automorphism of $(X,\B_X,\mu)$, 
and in this case we write $T^{-1}$ for its inverse.

\subsection{Orlicz classes and rearrangements}
The backgrounds for Orlicz spaces, Luxemburg norms and rearrangements are classical, see \cite{BennettSharpley} for more details.

By a \emph{Young function} we mean a function
\[
        \Phi(t)=\int_0^t\phi(u)\,\dd u, \quad t\ge 0,
\]
where $\phi:[0,\infty)\to[0,\infty)$ is finite-valued, increasing and
left-continuous, and $\Phi(t)\to \infty$ as $t\to \infty$.  Thus
$\Phi$ is finite-valued, convex and increasing, with $\Phi(0)=0$.

For a metric probability  space $(X,\B_X,\mu)$ and any measurable function $f$, we use the Luxemburg norm 
\[
   \|f\|_{\Phi}=\inf\left\{\lambda>0:\int_X \Phi(|f|/\lambda)\dd \mu\le 1\right\}.
\]
The Orlicz space $L^\Phi$ consists of all measurable $f$  such that 
\[
    \int_X \Phi(|f|/\lambda)\dd \mu<\infty
\]
for some $\lambda>0$. 

For $s\in\N$, recall the regularized iterated logarithms
\[
        L_1(t)=\log(\mathrm{e}+t),\quad L_{s+1}(t)=\log(\mathrm{e}+L_s(t)).
\]
Define the logarithmic Young function
\[
        \Psi_s(t)=\int_0^t \bigl(1+L_s(u)\bigr)\dd u.
\]
Since $L_s$ is finite and increasing, $\Psi_s$ is a Young function. We use the notation
\[
      L\log_sL=L^{\Psi_s}.
\]

The simpler expression $t(1+L_s(t))$ will be used only as an equivalent growth function.
Indeed, by monotonicity of $L_s$ and the fact that $1+L_s(t/2)\asymp_s1+L_s(t)$, one has
\begin{equation}\label{eq:Psi-comparison}
        c_s t\bigl(1+L_s(t)\bigr)\le \Psi_s(t)   \le t\bigl(1+L_s(t)\bigr),   \quad t\ge 0.
\end{equation}
for some $c_s>0$.  Thus $ L\log_1L$ is the usual Zygmund space $L\log L$, see \cite[Chapter~4, Sections~6 and~8]{BennettSharpley}.

\begin{lem} \label{lem:Psi-estimates} 
For every fixed $s\in\N$, the following hold. 
\begin{enumerate}[label=\textup{(\alph*)}] 
\item\label{lem:Psi-estimates:1}  For every $A\ge 1$ there is $C_{A,s}<\infty$ such that
 \[ 
 \Psi_s(At)\le C_{A,s}\Psi_s(t), \quad t\ge 0. 
 \] 
\item\label{lem:Psi-estimates:2}
 On a metric probability space $(X,\B_X,\mu)$, 
 \[ 
 f\in L^{\Psi_s} \ \Longleftrightarrow\ \int_X |f|\bigl(1+L_s(|f|)\bigr)\dd \mu<\infty. 
 \]
  
 \item\label{lem:Psi-estimates:3}
 For every fixed $A,\alpha>0$, there is $C_{A,\alpha,s}<\infty$ such that 
 \[ 
 L_s(A(1+t)^\alpha)\le C_{A,\alpha,s}\bigl(1+L_s(\mathrm{e}+t)\bigr), \quad t\ge 0. 
 \] 
 In particular, there is $C_s>0$ such that 
 \[ 
 L_s(t^2)\le C_s\bigl(1+L_s(\mathrm{e}+t)\bigr). 
 \] 
\end{enumerate} 
\end{lem}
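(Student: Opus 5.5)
The three parts are elementary consequences of the structure of $L_s$. The basic tool is the subadditivity-type estimate
\[
L_1(ab)\le L_1(a)+L_1(b), \qquad a,b\ge 0,
\]
which follows from $\e+ab\le(\e+a)(\e+b)$. For higher $s$ it propagates inductively: $\log(\e+x+y)\le\log(\e+x)+\log(\e+y)$ for $x,y\ge0$, again because $\e+x+y\le(\e+x)(\e+y)$. Hence $L_s(ab)\le L_s(a)+L_s(b)$ for every $s\in\N$, by induction on $s$ using monotonicity of $\log(\e+\cdot)$.

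For \ref{lem:Psi-estimates:1}, I would use the substitution $u=Av$:
\[
\Psi_s(At)=A\int_0^t\bigl(1+L_s(Av)\bigr)\dd v\le A\int_0^t\bigl(1+L_s(A)+L_s(v)\bigr)\dd v\le A\bigl(1+L_s(A)\bigr)\Psi_s(t).
\]
The last step uses $1+L_s(A)+L_s(v)\le(1+L_s(A))(1+L_s(v))$. This gives $C_{A,s}=A(1+L_s(A))$.

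For \ref{lem:Psi-estimates:2}, the forward direction goes as follows. Suppose $\int\Psi_s(|f|/\lambda)\,\dd\mu<\infty$ for some $\lambda>0$, and take $A=\max(\lambda,1)$. By monotonicity and \ref{lem:Psi-estimates:1}, $\Psi_s(|f|)\le\Psi_s(A|f|/\lambda)\le C_{A,s}\Psi_s(|f|/\lambda)$, so this is integrable. Then \eqref{eq:Psi-comparison} gives $|f|(1+L_s(|f|))\le c_s^{-1}\Psi_s(|f|)$, which is integrable. Conversely, \eqref{eq:Psi-comparison} gives $\Psi_s(|f|)\le|f|(1+L_s(|f|))$, so membership holds with $\lambda=1$.

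For \ref{lem:Psi-estimates:3}, I would apply the product estimate twice:
\[
L_s\bigl(A(1+t)^\alpha\bigr)\le L_s(A)+L_s\bigl((1+t)^\alpha\bigr).
\]
The second term needs one more bound. Since $(1+t)^\alpha\le(\e+t)^{m}$ with $m=\lceil\alpha\rceil$, and the product estimate iterated $m$ times gives $L_s(x^m)\le mL_s(x)$, we get $L_s((1+t)^\alpha)\le mL_s(\e+t)$. Thus $C_{A,\alpha,s}=\max(L_s(A),m)$ works. The special case follows by taking $A=1$ and $\alpha=2$, using $t^2\le(1+t)^2$ and monotonicity.

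The only point needing care is the inductive product inequality for $L_s$, and that is a one-line check.
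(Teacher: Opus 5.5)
Your proof is correct. Part (b) is exactly the paper's argument: rescale to $\lambda=1$ using monotonicity or (a), then apply \eqref{eq:Psi-comparison}.

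Parts (a) and (c) reach the same conclusions by a somewhat different route.

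\textbf{(a).} The paper passes through the comparison function,
$\Psi_s(At)\le At(1+L_s(At))\le C_{A,s}\,t(1+L_s(t))\le C'_{A,s}\Psi_s(t)$.
This uses both halves of \eqref{eq:Psi-comparison}, and the middle step is justified only as ``by induction from the regularized logarithms.'' You instead change variables $u=Av$ inside the integral. This bypasses \eqref{eq:Psi-comparison} entirely and gives the explicit constant $A(1+L_s(A))$.

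\textbf{(c).} The paper proves an $L_1$ estimate and then inducts over the iterates without writing out the step. You iterate a single inequality and get the constant $\max\{L_s(A),\lceil\alpha\rceil\}$.

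Both of your arguments rest on one exact inequality, $L_s(ab)\le L_s(a)+L_s(b)$ for all $a,b\ge 0$. It has constant one and no additive term, so it is sharper than Lemma~\ref{lem:log-calc}\ref{lem:log-calc:3}. Your proof of it is correct: the base case comes from $\mathrm e+ab\le(\mathrm e+a)(\mathrm e+b)$, and the induction step from $\mathrm e+x+y\le(\mathrm e+x)(\mathrm e+y)$ together with $L_s\ge 0$.

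The paper's version is shorter on the page because it leans on \eqref{eq:Psi-comparison} and on unstated inductions. Yours is self-contained, makes those inductions explicit in one line, and gives Lemma~\ref{lem:log-calc}\ref{lem:log-calc:3} with constant one as a byproduct.
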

                     
\begin{proof}
By \eqref{eq:Psi-comparison},
\[
        \Psi_s(At)\le At\bigl(1+L_s(At)\bigr)\le C_{A,s}t\bigl(1+L_s(t)\bigr)
        \le C'_{A,s}\Psi_s(t),
\]
where the middle estimate follows by induction from the regularized logarithms.
This proves (a).  

(b) follows immediately from (a) and \eqref{eq:Psi-comparison}: if $\int_X\Psi_s(|f|/\lambda)\dd \mu<\infty$ for some $\lambda>0$, then scaling back
to $\lambda=1$ is allowed by monotonicity when $\lambda\le 1$ and by (a) when $\lambda>1$.  
The converse is immediate from the upper bound in \eqref{eq:Psi-comparison}. 

 Finally, (c) is first the elementary estimate for
$L_1$,
\[
        \log(\mathrm{e}+A(1+t)^\alpha)\le C_{A,\alpha}(1+\log(\mathrm{e}+t)),
\]
and then induction over the iterates. 
The final estimate follows by taking $A=1$ and $\alpha=2$.
\end{proof}

Let $(X,\B_X,\mu)$ be a probability space and let $f\ge 0$ be measurable and real-valued.  Write
\[
        d_f(t)=\mu\{x:f(x)>t\}, \quad t\ge 0,
\]
and define the decreasing rearrangement by
\[
        f^*(u)=\inf\{t>0:d_f(t)\le u\}, \quad 0<u<1.
\]
The generalized-inverse relation gives, for every $t>0$ and almost every
$u\in(0,1)$,
\[
        u<d_f(t) \quad\Longleftrightarrow\quad  t<f^*(u).
\]
Consequently,
\begin{equation}\label{eq:rearrangement-section}
        \int_0^\infty\mathbf1_{\{u<d_f(t)\}}\,\dd t=f^*(u) \quad\text{for a.e. }u\in(0,1),
\end{equation}
and $f$ and $f^*$ are equimeasurable:
\[
        d_f(t)  =  |\{u\in(0,1):f^*(u)>t\} |.
\]
Thus, for every nonnegative Borel function $\Theta$,
\begin{equation}\label{eq:equimeasurability}
        \int_X\Theta(f)\,\dd\mu = \int_0^1\Theta(f^*(u))\,\dd u.
\end{equation}
In particular, the layer-cake formula yields
\[
        \int_X f\,\dd\mu  = \int_0^\infty d_f(t)\,\dd t=\int_0^1f^*(u)\,\dd u.
\]
These standard formulas of rearrangement-invariant spaces can be found in \cite[Chapter~2, Sections~1--2]{BennettSharpley}.

\begin{lem}\label{lem:dist-bound}
For every $s\in\N$, there is $C_s>0$ such that every
nonnegative real-valued measurable $f$ on a probability space satisfies
\[
        \int_0^\infty d_f(t)\left(1+L_s\left(\frac{1}{d_f(t)}\right)\right) \dd t \le C_s\left( 1+\int_Xf\bigl(1+L_s(\mathrm e+f)\bigr)\,\dd\mu\right),
\]
where the integrand on the left is interpreted as zero when $d_f(t)=0$.
\end{lem}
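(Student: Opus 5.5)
We prove Lemma~\ref{lem:dist-bound} by passing to the decreasing rearrangement and splitting the $t$-integral according to the size of $d_f(t)$ relative to $t$.

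\emph{Step 1 (easy regime).} Since $\mu$ is a probability measure, $d_f(t)\le 1$. The region $t\le 1$ contributes at most $1+L_s(1/d)$ times $d$, and $d(1+L_s(1/d))$ is bounded on $(0,1]$, because $L_s(1/d)\le L_1(1/d)$ and $d\log(\mathrm e+1/d)$ is bounded. So $\int_0^1$ is $\lesssim_s 1$, and we may restrict to $t\ge1$.

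\emph{Step 2 (split at $d_f(t)=t^{-2}$).} For $t\ge1$ we split into two sets.
\begin{itemize}
\item[(i)] $A=\{t\ge1: d_f(t)\le t^{-2}\}$. Here we bound $d(1+L_s(1/d))\lesssim_s d^{1/2}$, using $L_s(u)\le L_1(u)\lesssim u^{1/2}$. Since $d^{1/2}\le t^{-1}$ is too weak to integrate, we instead use
\[
d\,L_1(1/d)\lesssim d^{1-\delta}\le t^{-2(1-\delta)},
\]
which is integrable on $[1,\infty)$ for $\delta<1/2$. Hence this part is $\lesssim_s 1$.
\item[(ii)] $B=\{t\ge1: d_f(t)> t^{-2}\}$. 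Here $1/d_f(t)<t^2$, so by monotonicity and Lemma~\ref{lem:Psi-estimates}\ref{lem:Psi-estimates:3},
\[
L_s(1/d_f(t))\le L_s(t^2)\le C_s\bigl(1+L_s(\mathrm e+t)\bigr).
\]
Thus
\[
\int_B d_f(t)\bigl(1+L_s(1/d_f(t))\bigr)\dd t\lesssim_s\int_0^\infty d_f(t)\bigl(1+L_s(\mathrm e+t)\bigr)\dd t.
\]
\end{itemize}

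\emph{Step 3 (Fubini).} Write $d_f(t)=\int_X\mathbf 1_{\{f>t\}}\dd\mu$ and exchange the order of integration:
\[
\int_0^\infty d_f(t)\bigl(1+L_s(\mathrm e+t)\bigr)\dd t=\int_X\int_0^{f(x)}\bigl(1+L_s(\mathrm e+t)\bigr)\dd t\,\dd\mu(x).
\]
Since $L_s$ is increasing, the inner integral is at most $f(x)\bigl(1+L_s(\mathrm e+f(x))\bigr)$, which gives the right-hand side of the lemma.

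Collecting Steps 1--3 yields the bound with a constant $C_s$.

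The main obstacle is the sparse regime (i), where $1/d_f(t)$ is not controlled by $t$. It is handled by the crude bound $d\log(\mathrm e+1/d)\lesssim d^{1-\delta}$ together with the threshold $t^{-2}$, whose exponent $2$ is chosen so that $t^{-2(1-\delta)}$ is integrable on $[1,\infty)$. One should check that this threshold matches the $t^2$ form of Lemma~\ref{lem:Psi-estimates}\ref{lem:Psi-estimates:3}, which it does.
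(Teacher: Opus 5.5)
Your proof is correct, but it takes a somewhat different route from the paper's. The paper passes to the decreasing rearrangement first. By monotonicity of $L_s$, $d_f(t)L_s(1/d_f(t))\le\int_0^1\mathbf 1_{\{u<d_f(t)\}}L_s(1/u)\dd u$, and Tonelli turns the left-hand side into $\int_0^1 f^*(u)L_s(1/u)\dd u$. The paper then splits this in the variable $u$ at $f^*(u)=u^{-1/2}$: the sparse piece is absorbed by the integrability of $u^{-1/2}L_s(1/u)$ on $(0,1)$, and the dense piece uses Lemma~\ref{lem:Psi-estimates}\ref{lem:Psi-estimates:3} together with equimeasurability of $f$ and $f^*$. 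You instead split directly in $t$ at $d_f(t)=t^{-2}$, which is the same curve $u=t^{-2}$ seen from the other axis. You bound the integrand pointwise on each piece and then need only the layer-cake identity $\int_0^\infty d_f(t)\phi(t)\dd t=\int_X\int_0^{f}\phi\dd\mu$. This avoids the generalized inverse and equimeasurability entirely. The price is a separate treatment of $t\le 1$, since $t^{-2(1-\delta)}$ is not integrable at $0$, plus the auxiliary bound $d\log(\mathrm e+1/d)\lesssim_\delta d^{1-\delta}$. The paper's version packages the whole sparse contribution into the single integrable function $u^{-1/2}L_s(1/u)$, so no small-$t$ case arises.

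One small correction: $L_s\le L_1$ is not literally true for small arguments; for example, $L_2(1)=\log(\mathrm e+L_1(1))>L_1(1)$. However, $\log(\mathrm e+x)\le 1+x$ for $x\ge 0$ gives $L_{j+1}\le L_j+1$, hence $L_s\le L_1+s-1$, and that weaker bound is all your Steps 1 and 2(i) actually need.
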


\begin{proof}
The part without the logarithm is $\int_Xf\,\dd\mu$.  For the remaining part,  by Tonelli's theorem  and   \eqref{eq:rearrangement-section}  we have 
\[
\begin{aligned}
        \int_0^\infty d_f(t)L_s(1/d_f(t)) \dd t
        &=\int_0^\infty\int_0^1 \mathbf1_{\{u<d_f(t)\}}L_s(1/d_f(t)) \dd u \dd t\\
        &\le \int_0^\infty\int_0^1 \mathbf1_{\{u<d_f(t)\}}L_s(1/u) \dd u \dd t\\
        &=\int_0^1f^*(u)L_s(1/u) \dd u.
\end{aligned}
\]
Split $(0,1)$ into
\[
        E_1=\{u:f^*(u)\ge u^{-1/2}\} \   \text{and }\   E_2=(0,1)\setminus E_1.
\]
On $E_1$,  we have $1/u\le (f^*(u))^2$, and then by Lemma~\ref{lem:Psi-estimates}(c),  
\[
        L_s(1/u) \le C_s\bigl(1+L_s(\mathrm e+f^*(u))\bigr).
\]
Since $f$ and $f^*$ are equimeasurable,
\[
\begin{aligned}
        \int_{E_1}f^*(u)L_s(1/u)\,\dd u
        &\le C_s\int_0^1f^*(u) \bigl(1+L_s(\mathrm e+f^*(u))\bigr)\,\dd u\\
        &=  C_s\int_Xf\bigl(1+L_s(\mathrm e+f)\bigr)\,\dd\mu.
\end{aligned}
\]
On $E_2$, it is clear that
\[
        f^*(u)L_s(1/u)\le u^{-1/2}L_s(1/u),
\]
and the right-hand side is integrable on $(0,1)$.  
Combining the estimates on $E_1$ and $E_2$, the desired upper bound follows.
\end{proof}

\begin{lem} \label{lem:lux-tail}
If $f\in L^{\Psi_s}$ and $f_K=f\1_{\{|f|\le K\}}$, then
\[
        \|f-f_K\|_{\Psi_s}\to 0 \quad \text{as } ~K\to \infty.
\]
\end{lem}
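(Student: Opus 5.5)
To prove Lemma~\ref{lem:lux-tail}, I will reduce norm convergence to convergence of a modular integral, and then use the $\Delta_2$-type doubling of Lemma~\ref{lem:Psi-estimates}\ref{lem:Psi-estimates:1} to handle arbitrary scales. Since $f-f_K=f\1_{\{|f|>K\}}$, it suffices to show that for every fixed $\varepsilon>0$,
\[
        \int_X \Psi_s\bigl(|f|\1_{\{|f|>K\}}/\varepsilon\bigr)\dd\mu \le 1
\]
for all sufficiently large $K$. Indeed, by the definition of the Luxemburg norm this gives $\|f-f_K\|_{\Psi_s}\le\varepsilon$.

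\textbf{Step 1.} Since $\Psi_s(0)=0$, the integrand equals $\Psi_s(|f|/\varepsilon)\1_{\{|f|>K\}}$. It therefore suffices to know that $\Psi_s(|f|/\varepsilon)\in L^1(\mu)$, after which dominated convergence finishes the argument. To see the integrability, note that $f\in L^{\Psi_s}$ means $\int\Psi_s(|f|/\lambda_0)\dd\mu<\infty$ for some $\lambda_0>0$. If $\varepsilon\ge\lambda_0$, monotonicity of $\Psi_s$ gives the claim directly. If $\varepsilon<\lambda_0$, apply Lemma~\ref{lem:Psi-estimates}\ref{lem:Psi-estimates:1} with $A=\lambda_0/\varepsilon\ge1$ to get
\[
        \Psi_s(|f|/\varepsilon)\le C_{A,s}\Psi_s(|f|/\lambda_0).
\]
Alternatively, one can invoke Lemma~\ref{lem:Psi-estimates}\ref{lem:Psi-estimates:2}, which shows that membership in $L^{\Psi_s}$ is scale-free.

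\textbf{Step 2.} Since $f$ is finite a.e. (it is integrable, because $\Psi_s(t)\ge t$), we have $\1_{\{|f|>K\}}\to0$ a.e. as $K\to\infty$. Dominated convergence, with the integrable majorant $\Psi_s(|f|/\varepsilon)$ from Step 1, then gives that the modular tends to $0$. In particular, it is at most $1$ for large $K$, as required.

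The only nontrivial point is Step 1: the passage from a single scale $\lambda_0$ to every scale $\varepsilon$. This is exactly where the doubling estimate of Lemma~\ref{lem:Psi-estimates}\ref{lem:Psi-estimates:1} is needed. For a general Young function without $\Delta_2$ the lemma would fail, so this is the step that genuinely uses the logarithmic form of $\Psi_s$.
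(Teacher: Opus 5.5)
Your proof is correct and follows the paper's argument essentially step for step. You use Lemma~\ref{lem:Psi-estimates}\ref{lem:Psi-estimates:1}, or monotonicity when $\varepsilon\ge\lambda_0$, to get the integrable majorant $\Psi_s(|f|/\varepsilon)$, then apply dominated convergence to the modular of $f\1_{\{|f|>K\}}/\varepsilon$ to conclude $\|f-f_K\|_{\Psi_s}\le\varepsilon$ for large $K$; your remark that $f$ is finite a.e.\ because $\Psi_s(t)\ge t$ is a small point the paper leaves implicit.
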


\begin{proof}
Choose $\lambda_0>0$ such that
\[
        \int_X\Psi_s(|f|/\lambda_0)\,\dd\mu<\infty.
\]
By Lemma~\ref{lem:Psi-estimates}\ref{lem:Psi-estimates:1}, for every $A\ge 1$ there is a constant $C_{A,s}\ge 1$ such that
\[
        \Psi_s(At)\le C_{A,s}\Psi_s(t),   \quad t\ge 0.
\]
Fix $\varepsilon>0$.  If $\varepsilon<\lambda_0$, apply this estimate with
$A=\lambda_0/\varepsilon$. If $\varepsilon\ge\lambda_0$, use the
monotonicity of $\Psi_s$.  In either case, we have 
\[
        \Psi_s(|f|/\varepsilon)\le C_{\lambda_0,\varepsilon,s}\Psi_s(|f|/\lambda_0) \in L^1(\mu).
\]
Observe that
\[
        |f-f_K|=|f|\mathbf1_{\{|f|>K\}}\to 0   \quad\text{a.e.},
\]
and since $\Psi_s$ is increasing, 
\[
\Psi_s(|f-f_K|/\varepsilon) \le \Psi_s(|f|/\varepsilon) \in L^1,
\]
then by the dominated convergence theorem, 
\[
\int_X \Psi_s(|f-f_K|/\varepsilon)\,\dd\mu\to 0 \quad (K\to \infty).
\]
For all large $K$ this integral is at most $1$, hence $\|f-f_K\|_{\Psi_s}\le\varepsilon$.
Since $\varepsilon$ is arbitrary, the lemma follows.
\end{proof}

\subsection{Logarithmic calculus}

Recall the iterated-logarithm denominators defined by
\[
\Lambda_0 = 1, \quad  \Lambda_q(t) = \prod_{j=1}^q L_j(t), \quad q \in \mathbb{N}.
\]
We shall repeatedly use the following elementary estimates.

\begin{lem}\label{lem:log-calc}
Let $q\in\N_0$.
\begin{enumerate}[label=\textup{(\roman*)},leftmargin=*]
\item\label{lem:log-calc:1}
There are constants $c_q,C_q>0$ such that for every real $R\ge 1$,
\[
        c_q\bigl(1+L_{q+1}(R)\bigr)\le 1+\int_1^R\frac{\dd u}{u\Lambda_q(u)}\le C_q\bigl(1+L_{q+1}(R)\bigr).
\]

\item\label{lem:log-calc:2}
There are constants $c_q,C_q>0$ such that for every integer $R\ge 1$,
\[
        c_q\bigl(1+L_{q+1}(R)\bigr)\le1+\sum_{n=1}^R\frac{1}{n\Lambda_q(n)}\le C_q\bigl(1+L_{q+1}(R)\bigr).
\]
In particular,
\[
        \sum_{n=1}^{\infty}\frac{1}{n\Lambda_q(n)} =\infty.
\]

\item\label{lem:log-calc:3}
For every $j\ge 1$, there is a constant $C_j>0$ such that
\[
        L_j(AB)\le  C_j\bigl(1+L_j(A)+L_j(B)\bigr), \quad A,B\ge 0.
\]
\end{enumerate}
\end{lem}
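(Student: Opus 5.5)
The plan is to prove (i) from the chain rule for $L_{q+1}$, then obtain (ii) by comparing the sum with the integral, and prove (iii) by induction on $j$.

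\textbf{Part (i).} For $q=0$ the integral equals $\log R$. Since $\log R\le L_1(R)$ and $1+L_1(R)\le 2+\log R$ for $R\ge1$, the claim is immediate.

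For $q\ge1$ the key identity is
\[
\frac{\dd}{\dd u}L_{q+1}(u)=\prod_{j=1}^{q}\frac{1}{\e+L_j(u)}\cdot\frac{1}{\e+u}.
\]
This follows from $L_{j+1}'=L_j'/(\e+L_j)$ and $L_1'=1/(\e+u)$. For $u\ge1$ we have $u\le \e+u\le (1+\e)u$, and $L_j(u)\le \e+L_j(u)\le(1+\e)L_j(u)$ because $L_j\ge1$. Hence
\[
(1+\e)^{-(q+1)}\,\frac{1}{u\Lambda_q(u)}\le L_{q+1}'(u)\le \frac{1}{u\Lambda_q(u)}.
\]
Integrating over $[1,R]$ gives
\[
\int_1^R\frac{\dd u}{u\Lambda_q(u)}\asymp_q L_{q+1}(R)-L_{q+1}(1).
\]
Since $L_{q+1}(1)$ is a fixed constant and $L_{q+1}\ge1$, adding $1$ to both sides yields the two-sided bound with constants depending only on $q$.

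\textbf{Part (ii).} The function $u\mapsto 1/(u\Lambda_q(u))$ is decreasing, so for integer $R\ge1$
\[
\int_1^{R+1}\frac{\dd u}{u\Lambda_q(u)}\le\sum_{n=1}^R\frac{1}{n\Lambda_q(n)}\le 1+\int_1^R\frac{\dd u}{u\Lambda_q(u)}.
\]
The upper bound then follows from (i). For the lower bound, apply (i) at $R+1$ and use the monotonicity of $L_{q+1}$. Divergence of the full series follows because $L_{q+1}(R)\to\infty$.

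\textbf{Part (iii).} For $j=1$:
\[
\e+AB\le(\e+A)(\e+B),
\]
so $L_1(AB)\le L_1(A)+L_1(B)$.

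For the inductive step, assume the bound holds for $j$. Then
\[
L_{j+1}(AB)=\log\bigl(\e+L_j(AB)\bigr)\le\log\bigl(\e+C_j(1+L_j(A)+L_j(B))\bigr).
\]
Put $x=L_j(A)\ge1$ and $y=L_j(B)\ge1$. Then $\e+C_j(1+x+y)\le C'(\e+x)(\e+y)$, since $1+x+y\le(\e+x)(\e+y)$. Therefore
\[
L_{j+1}(AB)\le\log C'+L_{j+1}(A)+L_{j+1}(B),
\]
which is the desired bound for $j+1$.

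\textbf{Main obstacle.} There is none of substance. The only care needed is to track the regularizing constants $\e$ in (i), which is why the two-sided comparison $u\le\e+u\le(1+\e)u$ and $L_j\le\e+L_j\le(1+\e)L_j$ on $u\ge1$ is the step to get right.
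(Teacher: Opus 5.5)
Your proof is correct and is essentially the paper's argument: the paper also compares $L_{q+1}'(u)$ with $1/(u\Lambda_q(u))$ via $L_{q+1}'=L_q'/(\e+L_q)$, integrates over $[1,R]$ and absorbs the constant by adding $1$, compares the sum with the integral for (ii), and proves (iii) by induction on the iterates. Two of your choices are small streamlinings: you write out the explicit product formula for $L_{q+1}'$, and for the lower bound in (ii) you use only the monotonicity $L_{q+1}(R+1)\ge L_{q+1}(R)$. There is one tiny slip: $1+L_1(R)\le 2+\log R$ fails for $R$ near $1$ (for example $1+\log(\e+1)>2$). However, $1+L_1(R)\le 1+\log(1+\e)+\log R$ holds for all $R\ge 1$ and repairs the $q=0$ case with no other change.
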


\begin{proof}
 
An induction on $q$ gives
\begin{equation}\label{eq:iterated-log-derivative}
        L_{q+1}'(u) \asymp_q \frac{1}{u\Lambda_q(u)}, \quad u\ge 1.
\end{equation}
Indeed, the case $q=0$ is immediate, and the induction step follows from
\[
        L_{q+1}'(u) = \frac{L_q'(u)}{\mathrm e+L_q(u)}
\]
and $\mathrm e+L_q(u)\asymp_q L_q(u)$ on $[1,\infty)$.
Integrating \eqref{eq:iterated-log-derivative} over $[1,R]$, and then absorbing the fixed value $L_{q+1}(3)$ by adding $1$, we get \ref{lem:log-calc:1} for all real $R\ge 1$.

The function $u\mapsto(u\Lambda_q(u))^{-1}$ is decreasing.  Then for every integer $R\ge 1$,
\[
        \int_1^{R+1}\frac{\dd u}{u\Lambda_q(u)}\le  \sum_{n=1}^R\frac{1}{n\Lambda_q(n)}\le\frac{1}{\Lambda_q(1)}+\int_1^R\frac{\dd u}{u\Lambda_q(u)}.
\]
Hence \ref{lem:log-calc:2} follows from \ref{lem:log-calc:1} and $1+L_{q+1}(R+1)\asymp_q1+L_{q+1}(R)$.  The divergence is immediate.

Finally, \ref{lem:log-calc:3} follows from
\[
        \log(\mathrm e+AB)\le C\bigl(1+\log(\mathrm e+A)+\log(\mathrm e+B)\bigr)
\]
and induction over the iterates.
\end{proof}

\subsection{Interval partitions and tower embeddings} \label{subsec:sweeping-tools}

This subsection presents three elementary tools for the lower-bound constructions.  Lemma~\ref{lem:fine-filling} gives fine interval partitions with prescribed measure bounds.  It is used directly in the polynomial-growth case and also serves as the main ingredient in the finite allocation Lemma~\ref{lem:finite-allocation} needed in the sparse case.
Finally, Lemma~\ref{lem:tower-embedding} realizes, within a single invertible ergodic Lebesgue measure-preserving transformation of $[0,1]$, the tower
data required by the polynomial-growth and sparse constructions.

\begin{lem}\label{lem:fine-filling}
Let $R\subset[0,1]$ be a finite union of half-open intervals,  $n\ge 2$ and  $0<\ell_i\le u_i\in\R$ for $1\le i\le n$ with  
    \[    
        \sum_{i=1}^n\ell_i\le\lambda(R)\le\sum_{i=1}^n u_i.
\]
For every $\eta>0$, there exists a partition $R=\bigsqcup_{i=1}^n H_i$ such that each   $H_i$ is a finite union of half-open intervals, 
\[
        \ell_i\le\lambda(H_i)\le u_i,
\]
and every  component interval of each $H_i$ has length at most $\eta$.
\end{lem}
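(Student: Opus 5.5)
First choose target masses $m_i\in[\ell_i,u_i]$ with $\sum_{i=1}^n m_i=\lambda(R)$. The function $\theta\mapsto\sum_i\bigl(\ell_i+\theta(u_i-\ell_i)\bigr)$ is continuous and nondecreasing on $[0,1]$. It takes the value $\sum\ell_i\le\lambda(R)$ at $\theta=0$ and the value $\sum u_i\ge\lambda(R)$ at $\theta=1$. By the intermediate value theorem there is a $\theta$ with equality, and we set $m_i=\ell_i+\theta(u_i-\ell_i)$. Then $m_i\ge\ell_i>0$ for every $i$, and it suffices to partition $R$ into finite unions $H_i$ of half-open intervals with $\lambda(H_i)=m_i$ exactly and all component intervals of length at most $\eta$.

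Next refine $R$. Write $R$ as a finite disjoint union of half-open intervals $[\alpha,\beta)$, merging adjacent ones if desired. Subdivide each of these into finitely many half-open intervals of length at most $\eta$, and list all resulting pieces as $J_1,\dots,J_M$ in increasing order of left endpoints. The allocation is then a greedy cut along this list, using the cumulative-mass function
\[
F(x)=\lambda\bigl(R\cap[0,x)\bigr),\qquad x\in[0,1].
\]
The function $F$ is continuous and nondecreasing, with $F(0)=0$ and $F(1)=\lambda(R)$. Put $s_k=m_1+\dots+m_k$, so that $0=s_0<s_1<\dots<s_n=\lambda(R)$. For $1\le k<n$, choose $x_k$ with $F(x_k)=s_k$, taking $x_0=0$ and $x_n=1$. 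Then define
\[
H_k=R\cap[x_{k-1},x_k).
\]
These sets partition $R$ and satisfy $\lambda(H_k)=s_k-s_{k-1}=m_k$. Each $H_k$ is a finite union of half-open intervals. Its components are of the form $J_j\cap[x_{k-1},x_k)$, or unions of such pieces where adjacent $J_j$ abut.

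The main point to check is that cutting does not produce components longer than $\eta$, and that merging does not either. Intersecting a half-open interval $J_j$ with $[x_{k-1},x_k)$ yields a half-open interval contained in $J_j$, so its length is at most $\eta$. If one insists that the listed components of $H_k$ are the pieces $J_j\cap[x_{k-1},x_k)$ themselves rather than their maximal unions, the length bound is immediate. Empty intersections are simply discarded. Since the $J_j$ are finitely many and there are $n$ cuts, each $H_k$ consists of at most $M+n$ pieces.

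Nonemptiness of each $H_k$ follows from $m_k>0$. So the only genuinely delicate step is the choice of the $m_i$ in the first paragraph, and the intermediate value argument settles it. The proof closes by noting that $\ell_i\le\lambda(H_i)=m_i\le u_i$.
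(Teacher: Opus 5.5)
Your choice of the $m_i$ and the measure bookkeeping are fine, but there is a genuine gap at the step you yourself flag as ``the main point to check''. Here a component interval of $H_i$ means a maximal interval contained in $H_i$. The paper's proof works with maximal components and needs $n\ge2$ precisely to keep same-label pieces apart.

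Your contiguous cut $H_k=R\cap[x_{k-1},x_k)$ keeps whole stretches of $R$ together, so the refinement into the pieces $J_j$ is invisible at the level of the set. Abutting pieces merge back into one component, and the claim that merging does not create long components is false. For $R=[0,1)$, $n=2$, $\ell_i=u_i=\tfrac12$, $\eta=\tfrac1{10}$, you get $H_1=[0,\tfrac12)$, a single component of length $\tfrac12>\eta$.

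Switching to ``listed pieces'' instead of maximal components turns the mesh condition into a vacuous one, since any finite union of half-open intervals can be chopped into short pieces. A symptom is that your argument never uses $n\ge2$, whereas for $n=1$ and $R=[0,1)$ the lemma is false. The paper relies on the maximal reading later: it doubles the index set in Lemma~\ref{lem:finite-stage-interval} only to get $\#\I\ge2$.

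The missing idea is to interleave the labels instead of concatenating them. Your $F$, restricted to $R$, is the inverse of the paper's concatenation map, so you can do this directly:
\begin{itemize}
\item choose $M$ with $\max_i m_i/M<\eta$;
\item cut each block $[(j-1)\lambda(R)/M,\,j\lambda(R)/M)$, $1\le j\le M$, into $n$ consecutive half-open pieces of lengths $m_1/M,\dots,m_n/M$;
\item let $G_i$ be the union of the $M$ pieces labelled $i$, and put $H_i=\{x\in R:F(x)\in G_i\}$.
\end{itemize}
Since $F$ is a translation on each maximal component of $R$, each $H_i$ is a finite union of half-open intervals with $\lambda(H_i)=m_i$.

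Since $n\ge2$ and all $m_j>0$, two pieces with the same label are always separated by a piece of positive length carrying a different label. Maximal components of $R$ are separated by gaps, so a component of $H_i$ lies in one component of $R$. Its image under $F$ therefore lies in a single piece, and the component has length at most $m_i/M<\eta$. This is precisely the paper's argument.
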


\begin{proof}
Set $L=\lambda(R)$. Choose $v_i\in[\ell_i,u_i]$ such that $\sum_{i=1}^n v_i=L$.
Write $R$ as the disjoint union of its maximal half-open component
intervals,
\[
    R=\bigsqcup_{k=1}^K[a_k,b_k),\quad a_1<b_1<a_2<\cdots<a_K<b_K.
\]
Set $r_k=b_k-a_k$, $ s_0=0$ and $s_k=\sum_{m=1}^k r_m$. Then  $s_K=L$. Define $\Phi:[0,L)\to R$ by
\[
    \Phi(t)=a_k+t-s_{k-1}, \quad \forall \ t\in[s_{k-1},s_k).
\]
It is clear that $\Phi$ is a measure-preserving piecewise translation.

For fixed $\eta>0$, choose $M\in\N$ so large that
\[
    \max_{1\le i\le n}\frac{v_i}{M}<\eta.
\]
For $0\le m<M$ and $1\le i\le n$, set
\[
    I_{m,i} =\left[ \frac{mL+\sum_{j<i}v_j}{M}, \frac{mL+\sum_{j\le i}v_j}{M} \right).
\]
Since $\sum_i v_i=L$, the intervals $I_{m,i}$ partition
$[0,L)$. Put
\[
    G_i=\bigsqcup_{m=0}^{M-1}I_{m,i}, \quad \text{ and } \quad H_i=\Phi(G_i).
\]
Then the sets $G_i$ partition $R$, and
\[
    \lambda(H_i)=\lambda(G_i) =\sum_{m=0}^{M-1}\lambda(I_{m,i}) =v_i \in[\ell_i,u_i].
\]

It remains to check the lengths of the component intervals. Since $n\ge 2$ and $v_j>0$ for every $j$, distinct intervals
$I_{m,i}$ with the same label $i$ are separated by an interval of positive length. For each $k$, the restriction of $\Phi$ to
$[s_{k-1},s_k)$ is a translation. Thus every component interval of $ G_i\cap[s_{k-1},s_k)$
is contained in some $I_{m,i}$, and its image under $\Phi$ has length at most $\frac{v_i}{M}<\eta$.
For fixed $k$, translation preserves the gaps between distinct intervals. For different $k$, the images lie in different maximal
component intervals of $R$. Hence every component interval of $H_i$ has length less than $\eta$.
\end{proof}

\begin{lem}\label{lem:finite-allocation}
Let $C_1,\ldots,C_m$ be pairwise disjoint finite unions of half-open
intervals, and let $\gamma_1,\ldots,\gamma_\ell\ge 0$. If
\[
    \sum_{i=1}^m\lambda(C_i)
    \le
    \sum_{a=1}^{\ell}\gamma_a,
\]
then there are finite unions of half-open intervals $C_{i,a}$ such that
\[
    C_i=\bigsqcup_{a=1}^{\ell}C_{i,a}, \quad 1\le i\le m,
\]
and
\[
    \sum_{i=1}^m\lambda(C_{i,a})\le\gamma_a, \quad 1\le a\le\ell.
\]
\end{lem}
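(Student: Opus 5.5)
The plan is to reduce to Lemma~\ref{lem:fine-filling} by first merging all the $C_i$ into a single set, splitting that set according to the budgets $\gamma_a$, and then pulling the pieces back to each $C_i$. A more direct route, which I will actually follow, is to treat the problem as a greedy ``water-filling'' on a line. Set $C=\bigsqcup_{i=1}^m C_i$, which is a finite union of half-open intervals with $\lambda(C)=\sum_i\lambda(C_i)$. Concatenate the components of $C_1$, then those of $C_2$, and so on, via a measure-preserving piecewise translation $\Phi:[0,\lambda(C))\to C$, exactly as in the proof of Lemma~\ref{lem:fine-filling}. In this ordering each $C_i$ is the image of a half-open interval $J_i=[s_{i-1},s_i)$, where $s_i=\sum_{j\le i}\lambda(C_j)$.

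Next we cut the line $[0,\lambda(C))$ according to the budgets. Put $g_0=0$ and $g_a=\min\bigl(\sum_{b\le a}\gamma_b,\lambda(C)\bigr)$, and let $K_a=[g_{a-1},g_a)$, with the convention that $K_a=\emptyset$ if $g_{a-1}=g_a$. The hypothesis $\sum_i\lambda(C_i)\le\sum_a\gamma_a$ gives $g_\ell=\lambda(C)$. Hence the $K_a$ partition $[0,\lambda(C))$ into half-open intervals with $\lambda(K_a)\le\gamma_a$.

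Now define $C_{i,a}=\Phi(J_i\cap K_a)$. Since $J_i\cap K_a$ is a half-open interval (possibly empty) and $\Phi$ restricted to each $J_i$ is a piecewise translation with finitely many pieces, each $C_{i,a}$ is a finite union of half-open intervals. For fixed $i$ the sets $J_i\cap K_a$ partition $J_i$, so $C_i=\bigsqcup_a C_{i,a}$. Finally, because $\Phi$ is measure preserving and the $J_i$ are disjoint,
\[
\sum_i\lambda(C_{i,a})=\sum_i\lambda(J_i\cap K_a)=\lambda(K_a)\le\gamma_a .
\]

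There is no real obstacle in this argument. The only points needing care are bookkeeping ones: allowing empty sets $C_{i,a}$ (an empty finite union), handling budgets with $\gamma_a=0$, and checking that $\Phi(J_i)=C_i$ exactly, which holds because we ordered the components of $C$ block by block.
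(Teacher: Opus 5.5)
Your argument is correct and is essentially the paper's proof. The paper also concatenates the components of $C=\bigsqcup_i C_i$ by a measure-preserving piecewise translation $\Theta:[0,\lambda(C))\to C$, cuts $[0,\lambda(C))$ at the partial sums $\sum_{b\le a}\gamma_b$ truncated to $[0,\lambda(C))$, and sets $C_{i,a}=C_i\cap\Theta(J_a)$. Your block-by-block ordering, which makes $\Phi^{-1}(C_i)$ a single interval, is only a cosmetic variant, since the intersection $C_i\cap\Theta(J_a)$ works for any ordering of the components.
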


\begin{proof}
Let $C = \bigsqcup_{i=1}^m C_i$ and $L = \lambda(C)$. In the same spirit as Lemma \ref{lem:fine-filling}, 
concatenating the maximal half-open component intervals of $C$ we obtain a measure-preserving piecewise translation $\Theta:[0,L)\to  C.$

Set $\Gamma_0 = 0$ and $\Gamma_a = \sum_{b=1}^a \gamma_b$ for  $1 \le a \le \ell$.
By assumption $\Gamma_\ell \ge L$. For each  $1 \le a \le \ell$, define  
\[
    J_a=[\Gamma_{a-1},\Gamma_a)\cap[0,L).
\]
Then the intervals $J_a$  partition $[0, L)$, and $\lambda(J_a) \le \gamma_a$ for all $1 \le a \le \ell$. 
Define
\[
    C_{i,a}=C_i\cap\Theta(J_a).
\]
Then for each  $1 \le a \le \ell$ and each $1\le i\le m$, $C_{i,a}$ are finite unions of half-open intervals, 
\[
    C_i=\bigsqcup_{a=1}^{\ell}C_{i,a},
\]
and 
\[
    \sum_{i=1}^m\lambda(C_{i,a})=\lambda (\Theta(J_a) ) =\lambda(J_a) \le\gamma_a.
\]
\end{proof}

\begin{lem}\label{lem:tower-embedding}
Let $\ell_s\in\N_0$ and $\delta_s>0$, $s\ge 1$, and let $\{E_{s,t}:s\ge 1,\ 0\le t\le \ell_s\}$
be pairwise disjoint Borel subsets of $[0,1]$ satisfying $\lambda(E_{s,t})=\delta_s$.
Assume that $ q_0=\sum_{s=1}^\infty(\ell_s+1)\delta_s<1.$
For $0\le t<\ell_s$, let
\[
        \phi_{s,t}:E_{s,t}\to E_{s,t+1}
\]
be a measure-preserving isomorphism.  Then there exists an invertible ergodic Lebesgue measure-preserving transformation $T$ of
$[0,1]$ such that
\[
        T=\phi_{s,t}  \quad\text{a.e. on }E_{s,t}
\]
for every $s\ge 1$ and $0\le t<\ell_s$.
\end{lem}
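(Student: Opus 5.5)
We construct $T$ in two stages: first a partial map on the towers, then an ergodic extension to the rest of the interval.

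\emph{Stage 1: the partial map on the towers.} Put $E=\bigsqcup_{s\ge1}\bigsqcup_{t=0}^{\ell_s}E_{s,t}$, so $\lambda(E)=q_0<1$. On $E_{s,t}$ with $t<\ell_s$ we set $T_0=\phi_{s,t}$. This defines $T_0$ on
\[
\mathrm{Dom}=\bigsqcup_{s}\bigsqcup_{t<\ell_s}E_{s,t},
\]
as a measure-preserving bijection onto
\[
\mathrm{Im}=\bigsqcup_{s}\bigsqcup_{t\ge1}E_{s,t}.
\]
The leftover pieces $D=[0,1]\setminus \mathrm{Dom}$ and $I=[0,1]\setminus \mathrm{Im}$ have equal measure
\[
1-q_0+\sum_s\delta_s>0 .
\]

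\emph{Stage 2: gluing the towers into one ergodic system.} The task is to extend $T_0$ to a measure-preserving bijection $D\to I$ so that the global map is ergodic. Set $F=[0,1]\setminus E$, which has measure $1-q_0>0$. Then
\[
D=F\sqcup\bigsqcup_s E_{s,\ell_s},\qquad I=F\sqcup\bigsqcup_s E_{s,0}.
\]
We build a single ``return'' structure through $F$ as follows.

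\begin{enumerate}
\item Fix an invertible ergodic (for instance, weakly mixing) automorphism $S$ of the normalized space $F$. Such an $S$ exists because $F$ is a nonatomic Lebesgue space, hence isomorphic to $[0,1]$ by the classification recalled in Subsection~\ref{subsect:lebesgue}; an irrational rotation transported to $F$ suffices.
\item Choose pairwise disjoint Borel sets $B_s\subset F$ with $\lambda(B_s)=\delta_s$. This is possible once the $\delta_s$ are small relative to $\lambda(F)$. If they are not, we instead split each tower's top and bottom levels into pieces and use the finite-union allocation idea of Lemma~\ref{lem:finite-allocation}. The cleanest alternative avoids this issue entirely; see the paragraph on the main obstacle below.
\item Define the resulting transformation $T$ as the \emph{tower over $(F,S)$}: each $s$-tower is inserted into the $S$-orbit at $B_s$. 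Concretely, the point $x\in B_s$ goes to $E_{s,0}$ via a fixed isomorphism $\psi_s:B_s\to E_{s,0}$, climbs the tower by the $\phi_{s,t}$, and the top $E_{s,\ell_s}$ returns to $F$ at $S(x)$ via $S\circ\psi_s^{-1}\circ(\phi_{s,\ell_s-1}\circ\cdots\circ\phi_{s,0})^{-1}$. On $F\setminus\bigcup_s B_s$ we set $T=S$.
\end{enumerate}

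\emph{Verification.} The map $T$ is a bijection modulo null sets, preserves $\lambda$ piece by piece, and agrees with $\phi_{s,t}$ on each $E_{s,t}$ with $t<\ell_s$. The first return map of $T$ to $F$ is exactly $S$, which is ergodic. Every point of $E$ enters $F$ in finitely many steps, because it climbs to the top of its tower and then returns. Hence $T$ is ergodic: a $T$-invariant set $A$ has $A\cap F$ invariant under $S$, and $A$ is recovered from $A\cap F$ by the finitely many tower levels above it.

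\emph{Main obstacle.} The hard step is fitting the bases $B_s$, of total measure $\sum_s\delta_s$, into $F$, because $\sum_s\delta_s$ may exceed $1-q_0$. We resolve this by not requiring the base of the $s$-tower to be a subset of $F$ of full measure $\delta_s$. Instead we route the tower top back through $F$ via \emph{several} steps: thinking of a Kakutani-type construction, we take $S$ to be a map on $F$ and attach the towers as excursions above disjoint small subsets $B_{s}$ of measure $\delta_s\cdot\kappa_s$, with each tower level subdivided into $1/\kappa_s$ copies stacked. This is awkward.

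A simpler route is to take $D$ and $I$ directly, which are nonatomic Lebesgue spaces of equal measure. First choose any measure isomorphism $\theta:D\to I$, giving a global invertible $T_1$. Then, if $T_1$ is not ergodic, modify $\theta$ on the sub-piece $F$ by composing with a suitable cutting-and-stacking rearrangement that connects the ergodic components. Since the towers' sets are the only constraints and the free region $F$ has positive measure, I would try to realize this by choosing $\theta$ so that the induced map on $D\cap I\supset F$ is an ergodic rotation. Verifying ergodicity of the induced map is the delicate point.
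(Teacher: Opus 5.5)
Your skyscraper construction in items 1--3 is sound as far as it goes: the first-return map to $F$ is $S$, and your ergodicity argument is correct. The gap is that it needs pairwise disjoint $B_s\subset F$ with $\lambda(B_s)=\delta_s$, i.e.\ $\sum_s\delta_s\le 1-q_0$. The hypotheses do not give this: take $\ell_s=1$ for all $s$ and $\sum_s\delta_s=0.45$, so $q_0=0.9$ and $\lambda(F)=0.1$. You notice the problem, but neither repair is carried out. The stacking variant is left as a sketch. The ``simpler route'' (an arbitrary isomorphism $\theta\colon D\to I$, modified by ``a suitable cutting-and-stacking rearrangement'', with ergodicity of an induced map on $D\cap I$ left unverified) is not an argument. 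So as written you have proved the lemma only when $\sum_s\delta_s\le 1-q_0$. That case happens to cover the paper's applications, where $q_0<1/16$, but the lemma is stated without this restriction.

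The missing idea is to stop putting the tower bases inside $F$. Use as base the set $I=F\sqcup\bigsqcup_sE_{s,0}$, whose measure $1-q_0+\sum_s\delta_s>0$ you already computed.
\begin{enumerate}
\item Fix an ergodic automorphism $S$ of $I$ with its normalized Lebesgue measure.
\item Write $\Pi_s=\phi_{s,\ell_s-1}\circ\cdots\circ\phi_{s,0}\colon E_{s,0}\to E_{s,\ell_s}$, taken to be the identity if $\ell_s=0$.
\item Set $T=S$ on $F$, $T=\phi_{s,t}$ on $E_{s,t}$ for $t<\ell_s$, and $T=S\circ\Pi_s^{-1}$ on $E_{s,\ell_s}$.
\end{enumerate}
Then $T$ maps $D=F\sqcup\bigsqcup_sE_{s,\ell_s}$ measure-preservingly onto $S(I)=I$. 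The first-return map of $T$ to $I$ is exactly $S$, and every point enters $I$ in finitely many steps. Your ergodicity argument therefore applies verbatim with $I$ in place of $F$, and no capacity issue arises.

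This is the paper's proof in concrete form. The paper partitions the circle under an irrational rotation $R$ into $B_0,B_1,\ldots$ with measures proportional to $1-q_0$ and $\delta_s$. It forms the discrete suspension with height $1$ over $B_0$ and $\ell_s+1$ over $B_s$, and derives ergodicity from that of $R$. Finally it transports $B_0\times\{0\}$ and $B_s\times\{0\}$ onto $F$ and $E_{s,0}$.

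Your stacking idea can also be made rigorous. Choose integers $m_s$ with $\sum_s\delta_s/m_s<1-q_0$ and cut $E_{s,0}$ into $m_s$ pieces of equal measure. Then use the freedom of $T$ on $E_{s,\ell_s}$ to concatenate the $m_s$ sub-columns into one column whose base has measure $\delta_s/m_s$. This is more work for the same conclusion.
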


\begin{proof}
Let $(Z,\B_Z,\nu,R)$ be the irrational rotation on the circle with Lebesgue measure. It is know that $R$ is invertible and ergodic. 
Since  $q_0=\sum_{s=1}^{\infty}(\ell_s+1)\delta_s$, we have
$D=1-q_0+\sum_{s=1}^{\infty}\delta_s>0$.
Choose a Borel partition
\[
        Z=B_0\sqcup B_1\sqcup B_2\sqcup\cdots
\]
such that
\[
        \nu(B_0)=\frac{1-q_0}{D},\quad \text{ and }\quad \nu(B_s)=\frac{\delta_s}{D}\quad (s\ge 1).
\]
For example, the sets $B_s$ may be chosen as consecutive half-open
intervals on the circle.  Define $h:Z\to\N$ by
\[
        h|_{B_0}=1,\quad \text{ and }\quad  h|_{B_s}=\ell_s+1\quad (s\ge  1).
\]
We have 
\[
        \int_Z h\dd \nu= \frac{1-q_0}{D} + \sum_{s=1}^{\infty}(\ell_s+1)\frac{\delta_s}{D}= \frac{1}D.
\]

Consider the suspension
\[
        Z^h=\{(z,t):z\in Z,\ 0\le t<h(z)\},
\]
equipped with the normalized suspension measure
\[
        \nu^h=\frac{1}{\int_Z h\dd\nu} \bigl((\nu\times \#)\!|_{Z^h}\bigr),
\]
where $\#$ denotes counting measure on $\N_0$.  Define the
discrete suspension transformation
\[
        \widetilde R(z,t)=
        \begin{cases}
        (z,t+1), & t+1<h(z),\\
        (R \, z,0), & t+1=h(z).
        \end{cases}
\]
This is the standard discrete suspension construction, see \cite[Section~2.9]{EinsiedlerWard} for instance.  
Clearly, the map $\widetilde R$ is invertible and preserves $\nu^h$.  
Moreover, $\widetilde R$ is ergodic.  Indeed, let $G\in L^2(Z^h,\nu^h)$ satisfy $G\circ \widetilde R=G$.  Then  for every $i\ge 0$,
\[
        G(z,i+1)=G(z,i) \quad\text{for a.e. } z\in\{z\in Z: h(z)>i+1\}.
\]
Since there are only countably many levels, it follows that for a.e. $z$,
\[
        G(z,0)=G(z,1)=\cdots=G(z,h(z)-1).
\]
On the top level, $G\circ \widetilde R=G$ implies
\[
        G(R\, z,0)=G(z,h(z)-1)  
\]
for a.e. $z$. Hence
\[
        G(R\, z,0)=G(z,0) 
\]
for a.e. $z$.
Thus $g(z)=G(z,0)$ is $R$-invariant.  Since $R$ is ergodic, $g$ is constant a.e. Then the column relation above 
implies that $G$ is constant a.e. on $Z^h$. Hence $\widetilde R$ is ergodic.

By the classification theorem for non-atomic Lebesgue spaces as stated in Subsection \ref{subsect:lebesgue}, any two non-atomic Lebesgue spaces are measure-isomorphic.  Hence, for each $s\ge 1$, we may choose a measure-preserving isomorphism 
\[
        \Theta_{s,0}:B_s\times\{0\}\to E_{s,0}.
\]
For $1\le t\le\ell_s$, define
\[
        \Theta_{s,t}(z,t)
        =
        \phi_{s,t-1}\circ\phi_{s,t-2}\circ\cdots\circ\phi_{s,0}
        \bigl(\Theta_{s,0}(z,0)\bigr).
\]
Also choose a measure-preserving isomorphism 
\[
        \Theta_{0,0}:B_0\times\{0\}\to [0,1]\setminus\bigsqcup_{s,t}E_{s,t}.
\]
The maps $\Theta_{0,0}$ and $\Theta_{s,t}$ together define a global  measure-preserving isomorphism   $\Theta:Z^h\to[0,1]$.
Now define $ T =\Theta\circ\widetilde R\circ\Theta^{-1}$. 
Consequently, $T$ is an invertible, ergodic, Lebesgue measure-preserving transformation of $[0,1]$.
If $y\in E_{s,t}$ with $0\le t<\ell_s$, we can write $y=\Theta_{s,t}(z,t)$ for some $(z,t)\in Z$. Then
\[
\begin{aligned}
        Ty=\Theta\widetilde R(z,t)=\Theta(z,t+1)=
        \Theta_{s,t+1}(z,t+1)= \phi_{s,t}\bigl(\Theta_{s,t}(z,t)\bigr)=\phi_{s,t}(y),
\end{aligned}
\]
for a.e. $y\in E_{s,t}$.  This finishes the proof.
\end{proof}

\subsection{Local stability principle}\label{subsec:local-stable-principle}
This subsection establishes the general local stability principle used in the positive results throughout the paper.  
It shows that, on metric probability spaces with the LDP, almost-everywhere convergence together with an integrable maximal bound
implies locally stable convergence.

\begin{defn}
We say that a metric probability space $(X,d,\mu)$ has the \emph{Lebesgue differentiation property} (LDP), if for every $h\in L^1(\mu)$,
\begin{equation}\label{LDP}
  \lim_{r\downarrow0}\mathcal L_r(h)(x)=\lim_{r\downarrow0}\frac{1}{\mu(B(x,r))}\int_{B(x,r)}h(y)\dd\mu(y)=h(x) \quad  a.e. \ x\in X.
\end{equation}
A point at which  \eqref{LDP} holds  is referred as a \emph{Lebesgue point} of $h$.
\end{defn}
 
\begin{rem}
Euclidean spaces with Lebesgue measure have the LDP for balls \cite[Chapter~I, Section~1]{Stein}.  More generally, Borel regular
doubling metric measure spaces have the LDP for balls, see \cite[Section~3.4]{HKST}.  The property is not automatic for metric Borel
probability spaces.  Preiss, Riss and Ti{\v s}er \cite{PreissRissTiser2021} constructed a Gaussian measure on a separable
Hilbert space and a Borel set of positive measure whose ball densities are uniformly zero everywhere. In particular the LDP fails in this setting.
\end{rem}

We extend the notion of local stability to arbitrary sequences of integrable functions.

\begin{defn}\label{def:local-stability}
Let $(X,d,\mu)$ be a metric probability space, and let $F_N,F\in L^1(\mu)$ with $F_N\to F$ a.e. We say that the
convergence is \emph{locally stable} if
\[
    \lim_{\substack{r\downarrow 0,\ N\to \infty}}
    \mathcal L_rF_N(x)=F(x)
\]
for a.e. $x\in X$. Equivalently, for a.e. $x$ and every $\varepsilon>0$, there exist $N_0\in\N$ and $r_0>0$ such that
\[
    |\mathcal L_rF_N(x)-F(x)|<\varepsilon
\]
whenever $N\ge N_0$ and $0<r<r_0$.
\end{defn}

\begin{lem}\label{lem:local-stability-consequences}
Let $(X,d,\mu)$ have the LDP, and let $F_N,F\in L^1(\mu)$.
\begin{enumerate}
\item\label{lem:local-stability-consequences:1}
If
\[
        \lim_{m\to \infty} \limsup_{r\downarrow 0}\sup_{N\ge m}\mathcal L_r(|F_N-F|)(x)=0
\]
for  a.e. $x$, then $F_N$ converges locally stably to $F$;

\item\label{lem:local-stability-consequences:2}
If $F_N$ converges locally stably to $F$, then
\[
        F_N(x)\to  F(x)
\]
for a.e. $x$.
\end{enumerate}

Consequently, the hypothesis in \eqref{lem:local-stability-consequences:1} implies both local stable convergence and ordinary pointwise convergence.
\end{lem}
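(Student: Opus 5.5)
For part (1), I would compare $\mathcal L_rF_N(x)$ with $F(x)$ by the triangle inequality:
\[
|\mathcal L_rF_N(x)-F(x)|\le \mathcal L_r(|F_N-F|)(x)+|\mathcal L_rF(x)-F(x)|.
\]
Since $F\in L^1(\mu)$, the LDP gives a full-measure set of Lebesgue points of $F$. On that set the second term tends to $0$ as $r\downarrow0$. I then take $x$ in the intersection of this set with the full-measure set where the hypothesis of (1) holds. Given $\varepsilon>0$:
\begin{itemize}
\item first choose $m$ with $\limsup_{r\downarrow0}\sup_{N\ge m}\mathcal L_r(|F_N-F|)(x)<\varepsilon/2$;
\item then choose $r_0>0$ so that, for $0<r<r_0$, both $\sup_{N\ge m}\mathcal L_r(|F_N-F|)(x)<\varepsilon/2$ and $|\mathcal L_rF(x)-F(x)|<\varepsilon/2$.
\end{itemize}
Setting $N_0=m$ yields exactly the $\varepsilon$-$N_0$-$r_0$ form of Definition~\ref{def:local-stability}. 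One detail to note: $\mathcal L_r$ is only defined on balls of positive measure. By the convention in force, the LDP statement already presupposes this at a.e.\ $x$ for small $r$, so I would just remark on it.

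For part (2), the point is to recover $F_N(x)$ from the averages $\mathcal L_rF_N(x)$. There are countably many functions $F_N$ and $F$, all in $L^1(\mu)$. Applying the LDP to each and intersecting the resulting full-measure sets with the local-stability set gives a full-measure set $X_0$. At every $x\in X_0$:
\begin{itemize}
\item $\lim_{r\downarrow0}\mathcal L_rF_N(x)=F_N(x)$ for every $N$;
\item the joint limit $\mathcal L_rF_N(x)\to F(x)$ holds.
\end{itemize}
Fix $\varepsilon>0$ and take $N_0,r_0$ from local stability, so that $|\mathcal L_rF_N(x)-F(x)|<\varepsilon$ for all $N\ge N_0$ and $r<r_0$. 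For each fixed $N\ge N_0$, let $r\downarrow0$; this gives $|F_N(x)-F(x)|\le\varepsilon$. Since $\varepsilon$ is arbitrary, $F_N(x)\to F(x)$.

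The final sentence of the lemma is simply (1) followed by (2).

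The only step that needs any care is the countability in part (2). It is what allows us to use Lebesgue points of all the $F_N$ simultaneously, and this is precisely where the LDP is invoked for every $F_N$ rather than only for $F$. I expect no genuine obstacle beyond this.
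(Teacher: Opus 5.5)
Your proposal is correct and follows essentially the same argument as the paper. Part (1) uses the triangle inequality with a Lebesgue point of $F$, and part (2) takes a common Lebesgue point of the countably many $F_N$ and lets $r\downarrow 0$ for each fixed $N\ge N_0$. The paper uses $\varepsilon/4$ where you use $\varepsilon/2$ in part (1), which makes no difference.
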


\begin{proof}
\eqref{lem:local-stability-consequences:1}: Fix $x$ in a full-measure set on which
the hypothesis holds and the Lebesgue differentiation conclusion holds for $F$.  For $m\ge 1$, set
\[
        A_m(x)=\limsup_{r\downarrow 0}\sup_{N\ge m} \mathcal L_r(|F_N-F|)(x).
\]
By assumption $ A_m(x)\to 0$ as $m\to \infty$.

Let $\varepsilon>0$.  Choose $m\in\N$ such that
\[
        A_m(x)<\frac{\varepsilon}{4}.
\]
Then there exists $r_1>0$ such that
\[
        \sup_{0<r<r_1}\sup_{N\ge m}\mathcal L_r(|F_N-F|)(x)< \frac{\varepsilon}{2}.
\]
Since the Lebesgue differentiation conclusion holds for $F$, there exists $r_2>0$ such that
\[
        |\mathcal L_r(F)(x)-F(x)| < \frac{\varepsilon}{2}
\]
whenever $0<r<r_2$.  Therefore, for every $N\ge m$ and every $0<r<\min\{r_1,r_2\}$,
\[
        |\mathcal L_r(F_N)(x)-F(x)|\le\mathcal L_r(|F_N-F|)(x) +|\mathcal L_r(F)(x)-F(x)|  <\varepsilon.
\]
Then the locally stable convergence follows.

\eqref{lem:local-stability-consequences:2}:  Fix $x$ in a full-measure set on which the locally stable convergence holds and the Lebesgue differentiation conclusion holds simultaneously for
all $F_N$ ($N\in\N$).  
Let $\varepsilon>0$.  By locally stable convergence, there exist $m\in\N$ and $\delta>0$ such that
\[
        |\mathcal L_r(F_N)(x)-F(x)| <\varepsilon
\]
whenever $N\ge m$ and $0<r<\delta$.  Fix $N\ge m$.  Letting $r\downarrow 0$ and applying the LDP to $F_N$, we obtain
\[
        |F_N(x)-F(x)|\le\varepsilon.
\]
Thus
\[
        \sup_{N\ge m}|F_N(x)-F(x)|\le\varepsilon.
\]
Since $\varepsilon>0$ was arbitrary, it follows that $F_N(x)\to F(x)$.
\end{proof}
 
The following principle reduces local stability to pointwise convergence together with an integrable maximal majorant, yielding a general positive-direction criterion for  sharp endpoint problems.

\begin{thm}\label{thm:local-stability-principle}
Assume that $(X,d,\mu)$ has the LDP.  Let $(F_N)$ be measurable functions such that
\[
        F_N(x)\to F(x)\quad\textup{for a.e. }x, \quad F\in L^1(\mu),
\]
and assume that there is $H\in L^1(\mu)$ such that 
\[
        \sup_{N\ge 1}|F_N|\le H.
\]
Then
\[
        \lim_{m\to \infty}\limsup_{r\downarrow 0}  \sup_{N\ge m}\mathcal L_r(|F_N-F|)(x)=0
\]
for a.e. $x\in X$.  In particular, $F_N\to F$ is locally stable.
\end{thm}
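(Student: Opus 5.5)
The plan is to dominate the tail differences by a single monotone sequence of integrable functions and then apply the LDP to each member of that sequence. For $m\in\N$ set
\[
        G_m(x)=\sup_{N\ge m}|F_N(x)-F(x)|.
\]
The first observations are that $G_m$ is measurable, being a countable supremum, and that
\[
        0\le G_m\le H+|F|\in L^1(\mu).
\]
Moreover $G_m$ is pointwise nonincreasing in $m$, and $G_m(x)\to 0$ for a.e.\ $x$, because $F_N\to F$ almost everywhere. Since $|F_N-F|\le G_m$ pointwise for every $N\ge m$, positivity of $\mathcal L_r$ gives
\[
        \sup_{N\ge m}\mathcal L_r(|F_N-F|)(x)\le \mathcal L_r(G_m)(x)
\]
for every $r>0$ and every $x$. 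Note that the supremum stays outside the spatial average, which is exactly what lets us use one majorant for all $N\ge m$ at once.

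Next apply the LDP to each $G_m\in L^1(\mu)$. There are only countably many $m$, so there is a single full-measure set $X_0$ on which the following hold simultaneously:
\[
        \lim_{r\downarrow0}\mathcal L_r(G_m)(x)=G_m(x)\quad\text{for all } m,
\]
and $G_m(x)\to 0$. For $x\in X_0$ this gives
\[
        \limsup_{r\downarrow 0}\sup_{N\ge m}\mathcal L_r(|F_N-F|)(x)\le G_m(x).
\]
Letting $m\to\infty$ makes the right-hand side tend to $0$, which is the displayed conclusion of the theorem.

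Local stability then follows directly from Lemma~\ref{lem:local-stability-consequences}\eqref{lem:local-stability-consequences:1}, since $F\in L^1(\mu)$ and each $F_N\in L^1(\mu)$ because $|F_N|\le H$.

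The only delicate point is ensuring that the spatial averages are finite, so that the inequality via positivity of $\mathcal L_r$ is legitimate. This is precisely where the integrable majorant $H$ is used, and why pointwise convergence alone would not suffice. No uniform maximal inequality for the balls is needed, because we only apply the LDP to countably many fixed integrable functions. Measurability of $G_m$ and the countable intersection of full-measure sets are routine.
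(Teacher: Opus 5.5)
Your argument is correct, and it takes a more direct route than the paper.

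The paper's route:
\begin{itemize}
\item it replaces $H$ by $H+|F|$;
\item it uses Egorov's theorem to find sets $K_j$ with $\mu(K_j^c)<2^{-j}$ on which $F_N\to F$ uniformly;
\item Borel--Cantelli then puts a.e.\ $x$ in $K_j$ for all large $j$;
\item it applies the LDP to the countably many functions $H\mathbf 1_{K_j^c}$.
\end{itemize}
For $x\in K_j$ and $m$ large this gives $\sup_{N\ge m}|F_N-F|\le \eta/2+H\mathbf 1_{K_j^c}$, and the Lebesgue-point property at $x$ makes the average of the second term vanish as $r\downarrow 0$.

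You instead apply the LDP directly to the monotone tail suprema $G_m=\sup_{N\ge m}|F_N-F|$. These are integrable because $G_m\le H+|F|$. This gives the explicit bound $\limsup_{r\downarrow0}\sup_{N\ge m}\mathcal L_r(|F_N-F|)(x)\le G_m(x)$ at a.e.\ $x$, simultaneously for all $m$, and you conclude by letting $m\to\infty$.

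Both proofs use the integrable majorant in the same way: to make the LDP applicable to countably many auxiliary integrable functions. Yours avoids Egorov and Borel--Cantelli and gives a cleaner pointwise bound. The paper's version gains nothing extra here.

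One small correction to your closing remark. The inequality $\sup_{N\ge m}\mathcal L_r(|F_N-F|)\le\mathcal L_r(G_m)$ holds for nonnegative measurable functions whether or not the averages are finite. So $H$ is not needed to justify positivity. Its real role is to ensure $G_m\in L^1(\mu)$, so that the LDP can be applied to $G_m$, which you also note.
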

    
\begin{proof}
Replacing $H$ by $H+|F|$, we may assume that for all $N\ge 1$, 
\[
        |F_N-F|\le H.
\]
Let $X'\subset X$ be a full-measure set on which $F_N\to F$.  By Egorov's theorem, for each $j\ge 1$ there is a measurable set $K_j\subset X'$ such that $\mu(K_j^c)<2^{-j}$
and $F_N\to F$ uniformly on $K_j$.  Since $\sum_{j=1}^{\infty}\mu(K_j^c)<\infty,$
by the Borel--Cantelli lemma we have
\[
        \mu\left(\liminf_{j\to \infty}K_j\right)= \mu\left(\bigcup_{J=1}^{\infty}\bigcap_{j\ge J}K_j\right)=1.
\]

For each $j\ge 1$, clearly $H\mathbf 1_{K_j^c}\in L^1(\mu)$. By the LDP, after discarding one further null set, we may assume that every
point $x$ under consideration is a Lebesgue point of every
$H\mathbf 1_{K_j^c}$.  Thus let $X_0$ be the full-measure set of all
$x\in\liminf_j K_j$ such that
\[
        \lim_{r\downarrow 0}\mathcal L_r(H\mathbf 1_{K_j^c})(x) = H(x)\mathbf 1_{K_j^c}(x)
\]
for every $j\ge 1$.

Now take $x\in X_0$. Let $\eta>0$. since $x\in\liminf_jK_j$ we may choose large $j\in\N$ with $x\in K_j$.  By uniform convergence on $K_j$, there is
$N_j\in\N$ such that
\[
        |F_N(y)-F(y)|<\frac{\eta}{2}, \quad \forall \ y\in K_j,\ \forall \ N\ge N_j.
\]
Thus for every $m\ge N_j$, we have 
\[
        \sup_{N\ge m}|F_N-F|
        \le
        \frac{\eta}{2}+H\mathbf 1_{K_j^c}.
\]
Averaging over balls of positive measure with center at $x$ yields 
\[
        \sup_{N\ge m}\mathcal L_r(|F_N-F|)(x)\le \mathcal L_r\left(\frac{\eta}{2} +H\mathbf 1_{K_j^c}\right)(x).
\]
Since $x\in K_j$,  $H(x)\mathbf 1_{K_j^c}(x)=0$.  Hence by the
choice of $X_0$, we have 
\[
        \lim_{r\downarrow 0} \mathcal L_r(H\mathbf 1_{K_j^c})(x)=0.
\]
It follows that for every $m\ge N_j$,
\[
        \limsup_{r\downarrow0}
        \sup_{N\ge m}\mathcal L_r(|F_N-F|)(x)
        \le
        \frac{\eta}{2}.
\]
Letting $m\to \infty$, and then letting $\eta\downarrow0$, we obtain
\[
        \lim_{m\to \infty}\limsup_{r\downarrow0}
        \sup_{N\ge m}\mathcal L_r(|F_N-F|)(x)=0.
\]
Since $x\in X_0$ was arbitrary, the above equality holds almost everywhere. 
Consequently, by Lemma \ref{lem:local-stability-consequences} $F_N\to F$ is locally stable.
\end{proof}

We identify $\mathbb T=\R/\Z$ with $[0,1)$, equipped with normalized Lebesgue measure $\lambda$. 

\begin{ex}\label{ex:pointwise-not-local}
Almost-everywhere convergence need not be locally stable, even for a
uniformly $L^1$-bounded sequence on  $\mathbb T=\R/\Z$.
\end{ex}

\begin{proof}
For $N\in\N$, let $\alpha_N=2^{-N}$ and $M_N=2^{3N}$. For $0\le j<M_N$, set
\[
    Q_{N,j}=\left[\frac{j}{M_N},\frac{j+1}{M_N}\right) \quad \text{ and } \quad E_{N,j}=\left[\frac{j}{M_N}, \frac{j+\alpha_N}{M_N}\right).  
\]  
Define  
\[
  E_N=\bigsqcup_{j=0}^{M_N-1}E_{N,j} \quad \text{ and } \quad  F_N=\alpha_N^{-1}\mathbf 1_{E_N}.
\]
It is clear that $\lambda(E_N)=\alpha_N$, $\|F_N\|_1=1$ and $\sum_{N=1}^{\infty}\lambda(E_N)=1$, where $\lambda$ is the Lebesgue measure.
Hence  by the Borel--Cantelli lemma,
\[
    F_N(x)\to 0 \quad\text{for a.e. }x\in\mathbb T.
\]

Let $I\subset\mathbb T$ be an interval. Put
\[
    J_N(I)=\{0\le j<M_N:Q_{N,j}\subset I\} \quad \text{and } \quad U_N(I)=\bigsqcup_{j\in J_N(I)}Q_{N,j}.
\]
Since at most two partition intervals intersect $I$ without being contained in it, we have
\[
    \lambda(U_N(I))\ge |I|-\frac{2}{M_N}.
\]
Observe that 
\[
    \int_{Q_{N,j}}F_N\,\dd\lambda=\alpha_N^{-1}\lambda(E_{N,j})= \frac{1}{M_N}.
\]
Thus
\[
    \int_I F_N\,\dd\lambda  \ge \sum_{j\in J_N(I)}\int_{Q_{N,j}}F_N\,\dd\lambda=\lambda(U_N(I))\ge |I|-\frac{2}{M_N},
\]
and consequently
\[
    \frac{1}{|I|}\int_I F_N\,\dd\lambda \ge 1-\frac{2}{M_N|I|}.
\]

Fix $x\in\mathbb T$, $m\in\N$ and $0<r<1/2$. Taking
$I=B(x,r)$, we obtain that
\[
\begin{aligned}
    \sup_{N\ge m}\mathcal L_r(F_N)(x)\ge\limsup_{N\to \infty} \left(1-\frac{4}{M_N\lambda(B(x,r))}\right) =1,
\end{aligned}
\]
and consequently 
\[
    \liminf_{r\downarrow 0} \sup_{N\ge m}\mathcal L_r(F_N)(x)\ge 1.
\]
Hence, $F_N\to 0$ a.e. but the convergence is not locally stable.
\end{proof}

\section{\texorpdfstring{The sharp $L\log_{q+1}L$ Orlicz endpoint}{The sharp iterated-logarithmic endpoint at level q+1}}\label{sec:Llogqp1}

This section establishes the sharp $L\log_{q+1}L$ Orlicz endpoint for local observation problems.
The upper endpoint bound  is based on the $\RME_q$ property of the underlying system and requires no further assumptions on the time sequence. 
In contrast, the matching lower bound is established under a polynomial-growth assumption on the time sequence. After proving the abstract sharp endpoint theorem, we specialize it to two principal cases: averages along consecutive times and prime times.

\subsection{Restricted logarithmic maximal estimates}\label{subsec:RME}
This subsection introduces a restricted integral estimate for indicator functions, through endpoint lifting of which we obtain the required integrable maximal majorant in Theorem \ref{thm:local-stability-principle}.

Let $\mathbf a=(a_n)_{n\ge 1}$ be a sequence of nonnegative integers.  For a measure-preserving system $(X,\B_X,\mu, T)$ and a measurable function $f$ on $X$, define
\[
        A_N^{\mathbf a}f(x)=\frac{1}N\sum_{n=1}^N f(T^{a_n}x), \quad   A_{\mathbf a}^*f(x)=\sup_{N\ge 1}A_N^{\mathbf a}|f|(x),
\]
and
\[
          A_{N}^{\mathbf a, q}f(x)=\frac{A_N^{\mathbf a}f(x)}{\Lambda_q(N)}, \quad   M_{\mathbf a,q}f(x)=\sup_{N\ge 1} A_{N}^{\mathbf a, q}|f|(x)
\] 
for $q\in\N_0$.

\begin{defn} \label{def:RME}
For $q\in \N_0$, we say that $\mathbf a$ satisfies \emph{restricted maximal estimate of level $q$} ($\RME_q$) on a measure-preserving system $(X,\B_X,\mu, T)$ if
there is $C=C_{\mathbf a, q, T}>0$ such that for every $E\in \B_X$,
\[
        \int_X M_{\mathbf a,q}\1_E\dd \mu\le C\,\mu(E) \left(1+L_{q+1}\left(\frac{1}{\mu(E)}\right)\right),
\]
with the right-hand side interpreted as zero if $\mu(E)=0$.  

If the constant can be chosen independently of the measure-preserving system, we say that $\mathbf a$ satisfies $\RME_q$ uniformly over measure-preserving systems.
\end{defn}

The next proposition establishes the critical one-logarithm lifting step. 
The estimate $\mathrm{RME}_q$ is applied to the superlevel sets of any nonnegative function, and integrating over these levels yields an $L^1$ bound for the maximal function.

\begin{prop}\label{prop:RME-lift}
Fix $q\in\N_0$, a measure-preserving system $(X,\B_X,\mu,T)$, and a sequence $\mathbf a=(a_n)_{n\ge 1}$ of nonnegative integers. 
 Assume that $\mathbf a$ satisfies $\RME_q$ on this system, and let $C_0$ be an admissible constant in Definition~\ref{def:RME}.  If $f\in L^{\Psi_{q+1}}$, then
\[
\begin{aligned}
        \int_X M_{\mathbf a,q}f\dd \mu &\le C_0\int_0^\infty d_f(t)\left(1+L_{q+1}\left(\frac{1}{d_f(t)}\right)\right)\dd t    \\
        &\le C_0 C_q\left(1+ \int_X |f|\bigl(1+L_{q+1}(\mathrm e+|f|)\bigr)\dd \mu\right),
\end{aligned}
\]
where $ d_f(t)=\mu(\{x\in X:|f(x)|>t\})$, and the related integrand is interpreted as $0$ when
$d_f(t)=0$.  Moreover,
\[
        \|M_{\mathbf a,q}f\|_{L^1}\le C_0 C_q\|f\|_{\Psi_{q+1}}.
\]
Here $C_q$ depends only on $q$. 
 In particular, if $\mathbf a$ satisfies $\RME_q$ uniformly over measure-preserving systems, then the above estimates
hold uniformly over measure-preserving systems.
\end{prop}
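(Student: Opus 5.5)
The plan is to use the layer-cake decomposition of $|f|$, apply $\RME_q$ to each superlevel set, and then integrate in the level variable. Since $M_{\mathbf a,q}f=M_{\mathbf a,q}|f|$, we may assume $f\ge 0$. For every $x$ and every $N$, the identity $f=\int_0^\infty \1_{\{f>t\}}\dd t$ gives
\[
        A_N^{\mathbf a,q}f(x)=\int_0^\infty A_N^{\mathbf a,q}\1_{E_t}(x)\dd t\le \int_0^\infty M_{\mathbf a,q}\1_{E_t}(x)\dd t,\qquad E_t=\{f>t\}.
\]
This uses Tonelli for a finite sum of nonnegative functions. Taking the supremum over $N$ yields the pointwise bound $M_{\mathbf a,q}f\le\int_0^\infty M_{\mathbf a,q}\1_{E_t}\dd t$, which is a sublinearity statement for the maximal operator. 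To integrate over $X$, I need the map $(x,t)\mapsto M_{\mathbf a,q}\1_{E_t}(x)$ to be jointly measurable. This holds because each $A_N^{\mathbf a,q}\1_{E_t}(x)$ is jointly measurable, being a finite sum of $\1_{\{f(T^{a_n}x)>t\}}$, and a countable supremum of jointly measurable functions is jointly measurable. Tonelli together with $\RME_q$ applied to each $E_t$, where $\mu(E_t)=d_f(t)$, then gives the first inequality, with the convention that the integrand vanishes when $d_f(t)=0$.

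The second inequality is exactly Lemma~\ref{lem:dist-bound} with $s=q+1$, applied to $|f|$. The right-hand side there is finite for $f\in L^{\Psi_{q+1}}$ by Lemma~\ref{lem:Psi-estimates}\ref{lem:Psi-estimates:2} combined with $L_{q+1}(\mathrm e+u)\lesssim_q 1+L_{q+1}(u)$.

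For the norm estimate, I use homogeneity. We have $M_{\mathbf a,q}(f/\lambda)=\lambda^{-1}M_{\mathbf a,q}f$, and the first inequality applied to $g=f/\lambda$ with $\lambda>\|f\|_{\Psi_{q+1}}$ gives $\int\Psi_{q+1}(|g|)\le 1$. The difficulty is that the affine bound $C_q(1+\int|g|(1+L_{q+1}(\mathrm e+|g|)))$ must be shown to be bounded by an absolute constant depending only on $q$. This needs $\int |g|(1+L_{q+1}(\mathrm e+|g|))\dd\mu\lesssim_q 1$ whenever $\int\Psi_{q+1}(|g|)\le1$. I would obtain it by splitting $\{|g|\le 1\}$ from $\{|g|>1\}$. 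On the first set the integrand is bounded by a constant. On the second set, $L_{q+1}(\mathrm e+u)\lesssim_q 1+L_{q+1}(u)$, and then \eqref{eq:Psi-comparison} bounds the integrand by $c_{q+1}^{-1}\Psi_{q+1}(|g|)$. This gives $\int M_{\mathbf a,q}g\le C_0C_q'$, and scaling back and letting $\lambda\downarrow\|f\|_{\Psi_{q+1}}$ yields the claimed bound after renaming the constant.

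Uniformity over systems is then immediate, since the only system-dependent constant appearing anywhere is $C_0$. I expect the only delicate points to be the joint measurability needed for Tonelli and the bookkeeping of the additive constant in the Luxemburg normalization. The rest follows directly from Lemma~\ref{lem:dist-bound}.
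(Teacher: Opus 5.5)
Your proposal is correct and follows essentially the same route as the paper. Both arguments use the layer-cake formula and positivity to get $M_{\mathbf a,q}f\le\int_0^\infty M_{\mathbf a,q}\1_{\{|f|>t\}}\,\dd t$, then Tonelli with $\RME_q$ on each superlevel set, then Lemma~\ref{lem:dist-bound} with $s=q+1$, and finally homogeneity for the Luxemburg bound. Your explicit joint-measurability check and your split into $\{|g|\le1\}$ and $\{|g|>1\}$ are harmless additions. The split is not actually needed, since $L_{q+1}(\mathrm e+u)\lesssim_q 1+L_{q+1}(u)$ holds for all $u\ge0$; this is how the paper passes directly from \eqref{eq:Psi-comparison} to a constant depending only on $q$.
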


\begin{proof}
Applying the layer-cake formula to the nonnegative function $|f|$, we have for a.e. $x\in X$, 
\[
        |f(x)|=\int_0^\infty \mathbf 1_{\{|f|>t\}}(x)\dd t.
\]
Since the averages $A_N^{\mathbf a}$ are positive and linear, 
\[
        A_N^{\mathbf a}|f|(x)=\int_0^\infty A_N^{\mathbf a}\mathbf 1_{\{|f|>t\}}(x)\dd t.
\]
It follows that for a.e. $x\in X$,
\[
\begin{aligned}
     M_{\mathbf a,q}f(x)  = \sup_{N\ge 1}\frac{A_N^{\mathbf a}|f|(x)}{\Lambda_q(N)}  \le \int_0^\infty \sup_{N\ge 1} 
                            \frac{A_N^{\mathbf a}\mathbf 1_{\{|f|>t\}}(x)} {\Lambda_q(N)}\dd t  
                         =\int_0^\infty M_{\mathbf a,q}\mathbf 1_{\{|f|>t\}}(x)\dd t.
\end{aligned}
\]
Then by Tonelli's theorem, 
\[
        \int_X M_{\mathbf a,q}f\dd \mu \le\int_0^\infty \int_X M_{\mathbf a,q}\mathbf 1_{\{|f|>t\}}\dd \mu\dd t.
\]
For every $t\ge 0$, applying $\RME_q$ to the set
$\{|f|>t\}$ yields
\[
        \int_X M_{\mathbf a,q}\mathbf 1_{\{|f|>t\}}\dd \mu\le   C_0\, d_f(t) \left(1+L_{q+1}\left(\frac{1}{d_f(t)}\right)\right),
\]
with the right-hand side interpreted as $0$ if $d_f(t)=0$.  Hence
\[
\begin{aligned}
 \int_X M_{\mathbf a,q}f\dd \mu &\le C_0\int_0^\infty d_f(t) \left(1+L_{q+1}\left(\frac{1}{d_f(t)}\right)\right)\dd t\\
        &\le C_0 C_q^\prime\left(1+ \int_X |f|\bigl(1+L_{q+1}(\mathrm e+|f|)\bigr)\dd \mu\right),
\end{aligned}
\]
where the second inequality follows from Lemma~\ref{lem:dist-bound} applied to $|f|$, with  $C_q^\prime>0$ depending only on $q$.

It remains to prove the Luxemburg estimate. Let $\lambda>\|f\|_{\Psi_{q+1}}$.  Then 
\[
        \int_X \Psi_{q+1}\left(\frac{|f|}{\lambda}\right)\,d\mu\le 1.
\]
Applying \eqref{eq:Psi-comparison} to $s=q+1$,  there is $C_q^{\prime\prime}>0$ such that  
\[
        \int_X \frac{|f|}{\lambda}\left(1+L_{q+1}\left(\mathrm e+\frac{|f|}{\lambda}\right)\right) \dd\mu \le C_q^{\prime\prime}.
\]
Applying the preceding distributional estimate to $f/\lambda$, we have 
\[
        \|M_{\mathbf a,q}(f/\lambda)\|_{L^1 } \le C_0C_q^{\prime}(1+C_q^{\prime\prime}).
\]
Absorbing constants depending only on $q$, write $C_q=\max\{C_q^{\prime},C_q^{\prime}(1+C_q^{\prime\prime})\}$. 
Since $M_{\mathbf a,q}$ is homogeneous,  then
\[
        \|M_{\mathbf a,q}f\|_{L^1(\mu)} \le C_0C_q\lambda.
\]
Consequently, letting $\lambda\downarrow\|f\|_{\Psi_{q+1}}$ gives
\[
        \|M_{\mathbf a,q}f\|_{L^1(\mu)} \le C_0C_q\|f\|_{\Psi_{q+1}}.
\]
This finishes the whole proof.
\end{proof}

\begin{prop}\label{prop:RME-local}
Fix $q\in\N_0$, a metric probability space $(X,d,\mu)$ with the LDP, a measure-preserving transformation $T$, and a sequence $\mathbf a=(a_n)_{n\ge 1}$ of nonnegative integers.  Assume that $\mathbf a$ satisfies $\RME_q$ on $(X,\B_X,\mu,T)$.  
If $f\in L^{\Psi_{q+1}}$ and
\[
        A_{N}^{\mathbf a, q}f\to F\quad\text{a.e.}, \quad F\in L^1(\mu),
\]
then the convergence is locally stable.
\end{prop}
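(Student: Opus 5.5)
The plan is to apply Theorem~\ref{thm:local-stability-principle} with $F_N=A_N^{\mathbf a,q}f$ and the given limit $F$. The only hypothesis of that theorem not already supplied by the statement is the existence of an integrable majorant $H\in L^1(\mu)$ with $\sup_{N\ge1}|F_N|\le H$. The natural candidate is $H=M_{\mathbf a,q}f$.

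First, observe that pointwise we have
\[
        |A_N^{\mathbf a,q}f(x)|=\frac{|A_N^{\mathbf a}f(x)|}{\Lambda_q(N)}\le \frac{A_N^{\mathbf a}|f|(x)}{\Lambda_q(N)}\le M_{\mathbf a,q}f(x)
\]
for every $N\ge1$ and every $x$. This holds because $A_N^{\mathbf a}$ is a positive linear average, so $|A_N^{\mathbf a}f|\le A_N^{\mathbf a}|f|$, and $M_{\mathbf a,q}f$ is by definition the supremum over $N$ of the right-hand side. The functions $F_N$ are measurable, being finite sums of compositions of $f$ with the measurable maps $T^{a_n}$. The supremum $M_{\mathbf a,q}f$ is a countable supremum and hence also measurable.

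Second, since $f\in L^{\Psi_{q+1}}$ and $\mathbf a$ satisfies $\RME_q$ on the system, Proposition~\ref{prop:RME-lift} gives
\[
        \|M_{\mathbf a,q}f\|_{L^1(\mu)}\le C_0C_q\|f\|_{\Psi_{q+1}}<\infty .
\]
Thus $H=M_{\mathbf a,q}f\in L^1(\mu)$, and in particular it is finite a.e. Combined with the hypothesis $F_N\to F$ a.e. with $F\in L^1(\mu)$ and the LDP on $(X,d,\mu)$, all assumptions of Theorem~\ref{thm:local-stability-principle} hold. That theorem then yields
\[
        \lim_{m\to\infty}\limsup_{r\downarrow0}\sup_{N\ge m}\mathcal L_r(|F_N-F|)(x)=0
\]
almost everywhere. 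By Lemma~\ref{lem:local-stability-consequences}, this is locally stable convergence.

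There is no genuine obstacle here, since the proposition is essentially a corollary. The only point needing care is confirming that the majorant produced by Proposition~\ref{prop:RME-lift} dominates $|F_N|$ for the possibly signed $f$, which the pointwise inequality above handles. One should also note that each $F_N$ is integrable, as $|F_N|\le H$, so the spatial averages $\mathcal L_rF_N$ are well defined.
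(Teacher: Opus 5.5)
Your proposal is correct and follows the paper's proof: Proposition~\ref{prop:RME-lift} gives $M_{\mathbf a,q}f\in L^1(\mu)$, the pointwise bound $|A_N^{\mathbf a,q}f|\le M_{\mathbf a,q}f$ makes it an integrable majorant, and Theorem~\ref{thm:local-stability-principle} then yields local stability. Your extra remarks on measurability and on the integrability of each $F_N$ are details the paper leaves implicit.
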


\begin{proof}
By Proposition~\ref{prop:RME-lift}, $M_{\mathbf a,q}f\in L^1(\mu)$.  Since for every $N\ge 1$, 
\[
        |A_{N}^{\mathbf a, q}f|\le M_{\mathbf a,q}f.
\]
Thus the convergence is locally stable by Theorem~\ref{thm:local-stability-principle}.
\end{proof}

We say that the maximal operator $A_{\mathbf a}^*$ is of \emph{weak type $(1,1)$} on a measure-preserving system if there exists $C_w>0$ such that
\[
        \mu\bigl(\{A_{\mathbf a}^*g>\alpha\}\bigr) \le \frac{C_w}{\alpha}\|g\|_{L^1(\mu)}
\]
for every $g\in L^1(\mu)$ and every $\alpha>0$.  
Equivalently,
\[
        \|A_{\mathbf a}^*g\|_{L^{1,\infty}(\mu)}\le C_w\|g\|_{L^1(\mu)}.
\]
If the same constant $C_w$ holds for every  measure-preserving system, we say that $A_{\mathbf a}^*$ is \emph{uniformly of weak type $(1,1)$} over such systems.

\begin{prop}\label{prop:weak-RME}
Let $(X,\B_X,\mu,T)$ be a measure-preserving system and let $\mathbf a=(a_n)_{n\ge 1}$ be a sequence of nonnegative integers.   
If $A_{\mathbf a}^*$ is of \emph{weak type $(1,1)$} on  $(X,\B_X,\mu,T)$ with weak type constant $C_w$,
then $\mathbf a$ satisfies $\RME_q$ on this system for every $q\in\N_0$.  

If $A_{\mathbf a}^*$ is uniform weak type $(1,1)$ over measure-preserving systems, then $\mathbf a$ satisfies
$\RME_q$ uniformly over such systems for every $q\in\N_0$.
\end{prop}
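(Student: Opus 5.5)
Since $\Lambda_q\ge 1$, we have $M_{\mathbf a,q}\1_E\le A^*_{\mathbf a}\1_E$ pointwise. It therefore suffices to prove the case $q=0$ in a strengthened form:
\[
\int_X A^*_{\mathbf a}\1_E\,\dd\mu\le C\,\mu(E)\Bigl(1+\log\tfrac{1}{\mu(E)}\Bigr)\quad\text{with } C\text{ depending only on } C_w.
\]
Indeed, $L_1(1/\mu(E))=\log(\e+1/\mu(E))\ge\log(1/\mu(E))$, and $L_{q+1}\ge L_1$ fails for $q\ge1$. So the $q\ge1$ case needs the finer estimate that $M_{\mathbf a,q}$ provides.

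\textbf{Integrating the weak-type bound.} Write $\delta=\mu(E)>0$ and $g=A^*_{\mathbf a}\1_E$, so $0\le g\le 1$. By the layer-cake formula and the weak type inequality,
\[
\int g\,\dd\mu=\int_0^1\mu(g>\alpha)\,\dd\alpha\le\int_0^1\min\{1,C_w\delta/\alpha\}\,\dd\alpha\lesssim C_w\,\delta\bigl(1+\log(1/\delta)\bigr).
\]
This settles $q=0$, since $\log(1/\delta)\le L_1(1/\delta)$.

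\textbf{The case $q\ge1$: splitting the supremum.} Here we must exploit the denominator $\Lambda_q(N)$. Set $N_0=\lceil 1/\delta\rceil$. For $N\le N_0$ we have only the trivial bound $A_N^{\mathbf a}\1_E\le 1$, but the relevant quantity is the level $\alpha$ at which $\mu(g>\alpha)$ saturates. I would bound
\[
M_{\mathbf a,q}\1_E\le\sup_N\min\Bigl\{\frac{1}{\Lambda_q(N)},\frac{A^*_{\mathbf a}\1_E}{\Lambda_q(N)}\Bigr\}.
\]
This is not obviously helpful, so I would instead decompose dyadically in $N$. Let
\[
G_k=\sup_{2^k\le N<2^{k+1}}A_N^{\mathbf a}\1_E .
\]
Then $M_{\mathbf a,q}\1_E\le\sup_k G_k/\Lambda_q(2^k)$. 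A better approach is to integrate over levels with the weight $\Lambda_q$ built in:
\[
\mu\bigl(M_{\mathbf a,q}\1_E>\alpha\bigr)\le\mu\bigl(A^*\1_E>\alpha\bigr)\le C_w\delta/\alpha .
\]
Also, $A_N^{\mathbf a}\1_E/\Lambda_q(N)>\alpha$ forces $\Lambda_q(N)<1/\alpha$, i.e.\ $N<\Lambda_q^{-1}(1/\alpha)$. I would then use the trivial $L^1$ bound
\[
\int\sup_{N\le K}A_N\1_E\,\dd\mu\le\sum_{N\le K}\cdots
\]
which is too crude. The cleanest route: write $M_{\mathbf a,q}\1_E\le\sup_k G_k/\Lambda_q(2^k)$, with $\mu(G_k>\beta)\le C_w\delta/\beta$. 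Taking a union bound over $k$ gives
\[
\mu\bigl(M_{\mathbf a,q}\1_E>\alpha\bigr)\le\sum_{k:\ \Lambda_q(2^k)<1/\alpha}\min\Bigl\{1,\frac{C_w\delta}{\alpha\Lambda_q(2^k)}\Bigr\}.
\]
Integrating in $\alpha\in(0,1)$ and swapping the sum and integral, each $k$ contributes about
\[
\frac{\delta}{\Lambda_q(2^k)}\Bigl(1+\log^+\frac{1}{\delta\,\Lambda_q(2^k)}\Bigr)
\]
(up to the cap $\alpha<1/\Lambda_q(2^k)$). Summing over $k$ diverges, since $\sum_k 1/\Lambda_q(2^k)\approx\sum_k 1/(k\log k\cdots)=\infty$ for $q\ge1$. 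So the union bound must be truncated. For $2^k\ge K$ one would instead use a single weak-type bound on $\sup_{N\ge K}$ with denominator $\Lambda_q(K)$, giving $\delta\log(1/\delta)/\Lambda_q(K)$.

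\textbf{Choosing the cutoff.} Choose $K$ with $\Lambda_q(K)\approx\log(1/\delta)$, i.e.\ $\log K\approx\log(1/\delta)$ up to lower-order logarithms. The tail then contributes $O(\delta)$. The head, $k\le\log_2K$, contributes at most
\[
\delta\sum_{k\le\log K}\frac{1+\log(1/\delta)}{\Lambda_q(2^k)}.
\]
This still carries a factor $\log(1/\delta)$, so in the head I would use the trivial $L^1$ bound $\|G_k\|_1\le\sup_N\|A_N\1_E\|_1\cdot(\text{no})$. Instead I would use $\int G_k\le$ the weak-type-integrated bound restricted to the saturation: a small $N$ gives $G_k\le\#\{n\le 2^{k+1}:T^{a_n}x\in E\}/2^k$, whose $L^1$ norm is at most $2\delta$. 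Thus $\int\sup_{k\le\log K}G_k/\Lambda_q(2^k)\le\sum_k 2\delta/\Lambda_q(2^k)\lesssim\delta\,L_{q+1}(K)$ by Lemma~\ref{lem:log-calc}\ref{lem:log-calc:2}. Since $L_{q+1}(K)\lesssim_q 1+L_{q+1}(1/\delta)$, this gives the claim. The main obstacle is this balancing: the tail estimate must lose only $\log(1/\delta)/\Lambda_q(K)=O(1)$, which requires $L_1(K)\gtrsim\log(1/\delta)$ while keeping $L_{q+1}(K)\lesssim L_{q+1}(1/\delta)$. Taking $K=\lceil1/\delta\rceil$ achieves both. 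All constants depend only on $C_w$ and $q$, which yields the uniform statement.
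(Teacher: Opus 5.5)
Your final argument, after the false starts, is correct, but it is not how the paper proves this proposition.

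\textbf{The paper's route.} It works with the decreasing rearrangement $g^*$ of $g=M_{\mathbf a,q}\1_E$ and shows $g^*(s)\lesssim_{q,C_w}\delta/(s\Lambda_q(s/\delta))$ for $s>4\delta$. With $m=\lfloor s/(4\delta)\rfloor$, it removes two sets:
\begin{itemize}
\item the points whose orbit enters $E$ at some time $a_n$ with $n<m$, of measure at most $m\delta\le s/4$;
\item the weak-type level set $\{A^*_{\mathbf a}\1_E>4C_w\delta/s\}$, of measure at most $s/4$.
\end{itemize}
Off these sets, every $N<m$ contributes $0$ and every $N\ge m$ contributes at most $A^*_{\mathbf a}\1_E/\Lambda_q(m)$. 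Integrating $g^*$ gives $\delta\int_4^{1/\delta}\frac{\dd u}{u\Lambda_q(u)}\asymp_q\delta(1+L_{q+1}(1/\delta))$. This treats all $q\in\N_0$ at once, with a cutoff time that moves with the level $s$.

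\textbf{Your route.} You use a single cutoff $K=\lceil 1/\delta\rceil$. The head $N<K$ is controlled by linearity and invariance of $\mu$ alone. The tail is bounded by $A^*_{\mathbf a}\1_E/\Lambda_q(K)$, combined with the integrated bound $\int A^*_{\mathbf a}\1_E\,\dd\mu\lesssim_{C_w}\delta(1+\log(1/\delta))$. The logarithm is absorbed because $\Lambda_q(K)\ge L_1(K)\ge\log(1/\delta)$. This is exactly the argument the paper gives later in Proposition~\ref{prop:polylog-RME}, specialized to $A=1$; there the cutoff $m_R$ is comparable to $1/\delta$.

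\textbf{What each buys.} Your route uses the weak-type hypothesis only through one $L^1$ bound for $A^*_{\mathbf a}\1_E$, so it tolerates polylogarithmic losses in that bound. That is why the paper needs it for the primes. The rearrangement argument uses the weak-type inequality at every level, but handles $q=0$ and $q\ge 1$ uniformly. Your absorption step needs $q\ge1$, which forces the separate layer-cake computation for $q=0$; you do supply it.

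\textbf{Write-up fixes.} Delete the opening sentence claiming it suffices to treat $q=0$. As you yourself note, $\delta(1+L_1(1/\delta))$ is weaker than the required $\delta(1+L_{q+1}(1/\delta))$ for small $\delta$, so that reduction goes the wrong way. The abandoned attempts should go as well.

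The dyadic head sum $\sum_{k\le\log_2 K}\Lambda_q(2^k)^{-1}$ is also not literally Lemma~\ref{lem:log-calc}\ref{lem:log-calc:2}. You can fix this in either of two ways:
\begin{itemize}
\item compare it via Lemma~\ref{lem:iterated-sum} with $\sum_k (k\Lambda_{q-1}(k))^{-1}\lesssim_q 1+L_q(\log_2 K)\lesssim_q 1+L_{q+1}(K)$;
\item drop the dyadic blocks and use the pointwise bound $\frac{1}{N\Lambda_q(N)}\sum_{n\le N}b_n\le\sum_{n\le N}\frac{b_n}{n\Lambda_q(n)}$, after which Lemma~\ref{lem:log-calc}\ref{lem:log-calc:2} applies directly.
\end{itemize}
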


\begin{proof}
Fix $q\in\N_0$ and $E\in\B_X$.  Put $\delta=\mu(E)$ and $g=M_{\mathbf a,q}\mathbf 1_E$.
The case $\delta=0$ is trivial.  If $\delta \ge 1/8$, since $0\le g\le 1$, we have
\[
        \int_X g\dd\mu\le 1 \le 8 \delta,
\]
which is sufficient after enlarging the constant.  Now we assume that $0<\delta<1/8$.

Let $g^*$ be the decreasing rearrangement of $g$.  Since $0\le g\le 1$ a.e., we have $0\le g^*\le1$ a.e., and hence 
\[
        \int_0^{4\delta}g^*(s)\dd s\le 4\delta.
\]

We claim that for $4\delta<s<1$, there exists $C_{q,w}>0$  depending only on $q$ and $C_w$  such that 
\[
        g^*(s)\le C_{q,w}\frac{\delta}{s\Lambda_q(s/\delta)}.
\]
For $4\delta<s\le8\delta$, this follows from $g^*(s)\le 1$, since $s/\delta\in(4,8]$ and $\Lambda_q$ is bounded on this finite interval.  
Now let $8\delta<s<1$ and set $m=\left\lfloor\frac{s}{4\delta}\right\rfloor$.
Clearly 
\[
        \frac{s}{8\delta}\le m\le \frac{s}{4\delta}\ \text{ and } \ \Lambda_q(m)\asymp_q\Lambda_q(s/\delta).
\]
Define $S_s=\{x:\exists\,1\le n<m,\ T^{a_n}x\in E\}$. Since $T$ is measure-preserving, we have
\[
        \mu(S_s)\le m\delta\le s/4.
\]
For every fixed $N\ge m$, observe that 
\[
        \frac{A_N^{\mathbf a}\mathbf 1_E}{\Lambda_q(N)}\le \frac{A_{\mathbf a}^*\mathbf 1_E}{\Lambda_q(m)}.
\]
Set
\[
        \alpha_s=\frac{4C_w\delta}{s\Lambda_q(m)}
\]
and
\[
        L_s=\left\{\frac{A_{\mathbf a}^*\mathbf 1_E}{\Lambda_q(m)}>\alpha_s\right\}=\left\{A_{\mathbf a}^*\mathbf 1_E>\frac{4C_w\delta}{s} \right\}.
\]
Then by the weak type estimate assumption, 
\[
        \mu\left(L_s\right)\le \frac{C_w\delta}{4C_w\delta/s}=s/4.
\]
It follows that 
\[
        g(x)=M_{\mathbf a,q}\mathbf 1_E(x)\le \alpha_s, \quad \forall \ x\notin S_s\cup L_s.
\]
Consequently,
\[
        \mu(\{g>\alpha_s\})\le \mu(S_s)+\mu(L_s)\le \frac{s}{2}<s,
\]
and so $ g^*(s)\le \alpha_s$. This proves the claim.

Using the claim, we obtain that 
\[
\begin{aligned}
        \int_X g \dd\mu &=\int_0^1g^*(s)\dd s    \\
        &\le 4\delta+ C_{q,w}\delta  \int_{4\delta}^{1}\frac{\dd s}{s\Lambda_q(s/\delta)}   \\
        &= 4\delta+ C_{q,w}\delta\int_4^{1/\delta}\frac{\dd u}{u\Lambda_q(u)}   \\
        &\le C_{q,w}\delta \left(1+L_{q+1}\left(\frac{1}\delta\right)\right),
\end{aligned}
\]
where the last step follows from Lemma~\ref{lem:log-calc}.   This is exactly $\RME_q$ on the given system.  The uniform statement follows immediately
when the weak type constant $C_w$ is uniform over systems.
\end{proof}

\begin{thm}\label{thm:RME-convergence-zero}
Fix $q\in\N$, a measure-preserving system $(X,\B_X,\mu,T)$, and a sequence $\mathbf a=(a_n)_{n\ge 1}$ of nonnegative integers.  
Assume that $\mathbf a$ satisfies $\RME_q$ on this system.  Then for every $f\in L^{\Psi_{q+1}}$,
\[
        A_{N}^{\mathbf a, q}f(x)\to 0 \quad \text{for a.e. } x\in X.
\]  
If, in addition,  $(X,d,\mu)$ has the LDP, then the convergence is locally stable.
\end{thm}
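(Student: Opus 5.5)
The plan is a Banach principle argument. We prove convergence to $0$ on a dense class, then pass to all of $L^{\Psi_{q+1}}$ using the maximal bound of Proposition~\ref{prop:RME-lift}. The local stability statement will then follow at once from Proposition~\ref{prop:RME-local}, since the limit $0$ lies in $L^1(\mu)$.

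\emph{Dense class.} Take $f$ bounded, say $|f|\le K$. Then $|A_N^{\mathbf a}f|\le K$ pointwise. Since $q\ge 1$, we have $\Lambda_q(N)\ge L_1(N)=\log(\mathrm e+N)\to\infty$. Hence
\[
|A_N^{\mathbf a,q}f|\le K/\Lambda_q(N)\to 0
\]
everywhere. This is the only place where $q\ge1$ is used. It is essential: for $q=0$ the limit is not $0$, and bounded functions do not trivially converge.

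\emph{Oscillation control.} Fix $f\in L^{\Psi_{q+1}}$ and put $f_K=f\mathbf 1_{\{|f|\le K\}}$. By linearity,
\[
\limsup_{N\to\infty}|A_N^{\mathbf a,q}f|\le \limsup_{N\to\infty}|A_N^{\mathbf a,q}f_K|+M_{\mathbf a,q}(f-f_K)=M_{\mathbf a,q}(f-f_K).
\]
Proposition~\ref{prop:RME-lift} applies to $f-f_K\in L^{\Psi_{q+1}}$ and gives
\[
\|M_{\mathbf a,q}(f-f_K)\|_{L^1}\le C_0C_q\|f-f_K\|_{\Psi_{q+1}}.
\]
By Lemma~\ref{lem:lux-tail}, the right-hand side tends to $0$ as $K\to\infty$. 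So for each $\varepsilon>0$, Chebyshev's inequality gives
\[
\mu\Bigl(\limsup_N|A_N^{\mathbf a,q}f|>\varepsilon\Bigr)\le \varepsilon^{-1}C_0C_q\|f-f_K\|_{\Psi_{q+1}}\to0.
\]
Thus $\limsup_N|A_N^{\mathbf a,q}f|=0$ almost everywhere. Letting $\varepsilon$ run through $1/j$ completes this step.

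\emph{Local stability.} Apply Proposition~\ref{prop:RME-local} with $F=0\in L^1$. Equivalently, apply Theorem~\ref{thm:local-stability-principle} with majorant $H=M_{\mathbf a,q}f$, which is integrable by Proposition~\ref{prop:RME-lift}.

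There is no genuine obstacle here. The only points to check are that $f-f_K$ lies in $L^{\Psi_{q+1}}$ with small Luxemburg norm, which is exactly Lemma~\ref{lem:lux-tail} with $s=q+1$, and that $M_{\mathbf a,q}$ is sublinear.
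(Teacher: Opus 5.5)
Your proposal is correct and follows the paper's proof essentially step for step. Both arguments get trivial convergence for bounded $f$ from $\Lambda_q(N)\to\infty$ when $q\ge 1$, then control the tail $f-f_K$ by the maximal bound of Proposition~\ref{prop:RME-lift} together with Chebyshev and Lemma~\ref{lem:lux-tail}, and deduce local stability from Proposition~\ref{prop:RME-local} with $F=0$.
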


\begin{proof}
For bounded $f$,
\[
        |A_{N}^{\mathbf a, q}f|
        \le \frac{\|f\|_\infty}{\Lambda_q(N)}
        \to 0,
\]
since $q\ge 1$.  For general $f\in L^{\Psi_{q+1}}$, let
\[
        f_K=f\mathbf 1_{\{|f|\le K\}}, \ \text{ and } \ h_K=f-f_K.
\]
Then $A_{N}^{\mathbf a, q}f_K\to 0$ a.e., and hence
\[
\begin{aligned}
   \limsup_{N\to \infty} |A_{N}^{\mathbf a, q}f| & \le \limsup_{N\to \infty} |A_{N}^{\mathbf a, q}f_K|+\limsup_{N\to \infty} |A_{N}^{\mathbf a, q}h_K| \le M_{\mathbf a,q}h_K   \quad\text{a.e.}
\end{aligned}
\]
Therefore  for $\varepsilon>0$, by  Chebyshev's inequality and Proposition \ref{prop:RME-lift}, 
\[
\begin{aligned}
        \mu\left(\left\{\limsup_{N\to \infty} |A_{N}^{\mathbf a, q}f|>\varepsilon\right\}\right)   \le\mu\left(\{M_{\mathbf a,q}h_K>\varepsilon\}\right)  
         \le \frac{C}{\varepsilon} \|h_K\|_{\Psi_{q+1}}.
\end{aligned}
\]
Using Lemma~\ref{lem:lux-tail}, we have  $\|h_K\|_{\Psi_{q+1}}\to 0$. Then it implies that 
\[
        \limsup_{N\to \infty} |A_{N}^{\mathbf a, q}f|=0
        \quad\text{a.e. }  
\]
Thus  $A_{N}^{\mathbf a, q}f(x)\to 0$ for a.e. $x\in X$. The local stability follows
from Proposition~\ref{prop:RME-local} with $F=0$.
\end{proof}

\subsection{Lower endpoint construction for polynomial-growth times} \label{subsec:polynomial-lower}
To obtain the lower bound in the sharp endpoint theorem, we first establish a weighted local sweeping out construction on the Lebesgue interval.

\begin{thm}\label{thm:weighted-sweeping}
Let $\mathbf a=(a_n)$ be a strictly increasing sequence of nonnegative integers. Let $k_0\in\N$ and $(\beta_k)_{k\ge k_0}$ be positive numbers such that
\[
        \sum_{k=k_0}^{\infty}\beta_k=\infty.
\]
Then for every $0<\varepsilon<1$ and $0<\gamma<1$, there exist an invertible ergodic Lebesgue measure-preserving transformation $T$ of $[0,1]$, 
a nonnegative $f\in L^1([0,1])$, and a measurable set $Y\subset[0,1]$ such that
\[
        \lambda(Y)>1-\varepsilon, \quad \int f\dd\lambda<\gamma,
\]
and
\[
        \limsup_{\substack{r\downarrow 0, k\to \infty}}\mathcal L_r\left(\beta_k\sum_{n=1}^k f(T^{a_n}\cdot)\right)(x)=\infty
\]
for a.e. $x\in Y$.
\end{thm}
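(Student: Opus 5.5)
We plan to build $T$ by Lemma~\ref{lem:tower-embedding} from countably many disjoint towers, each handling one stage $s$, and to take $f=\sum_s c_s\mathbf 1_{F_s}$, with $F_s$ the base of stage $s$ and $c_s$ a height.

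\textbf{Stage parameters.} Fix $s\ge1$. Since $\sum_k\beta_k=\infty$, we can choose disjoint blocks $[K_{s-1},K_s)$ of indices with
\[
B_s:=\sum_{K_{s-1}\le k<K_s}\beta_k\ge s\,2^{s}/\gamma .
\]
Stage $s$ is meant to produce $\mathcal L_r(\beta_k\sum_{n\le k}f\circ T^{a_n})(x)\gtrsim s$ for some $k$ in this block and all small $r$. This must hold on a set of measure $>1-\varepsilon 2^{-s}$, except that the stage-$s$ towers themselves may be excluded. The total measure of all towers is made $<\varepsilon/2$, and $Y$ is the set of points lying in only finitely many exceptional sets, intersected with the complement of the towers. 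By Borel--Cantelli, $\lambda(Y)>1-\varepsilon$.

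\textbf{Construction at stage $s$.} Points $y$ near which we want large averages are mapped, at the times $T^{a_n}$, into the small set $F_s$. The weighting is what makes this possible. The usual pigeonhole behind sweeping out: if the sets $G_k=\{x:T^{a_k}x\in F_s\}$ for $k$ in the block have measures $\approx\beta_k\lambda(F_s)\cdot(\text{const})/\dots$, then $\sum_k \beta_k\lambda(G_k)$ can be large while $\lambda(F_s)$ is tiny. We therefore take $\lambda(F_s)=\delta_s$ and $c_s\delta_s\le\gamma2^{-s}$, so that $\int f<\gamma$.

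We then use Lemma~\ref{lem:fine-filling} to partition a set $R_s$ of measure close to $1$ into pieces $H_{s,k}$, with $K_{s-1}\le k<K_s$ and $\lambda(H_{s,k})\propto\beta_k/B_s$. All component intervals have length $\le\eta_s$, with $\eta_s\to0$. The columns of the towers are defined so that $T^{a_k}$ maps $H_{s,k}$ into $F_s$. Concretely, $H_{s,k}$ is cut into slices, each slice forming a level of a tower of height $a_k+1$ whose top level is in $F_s$. The strict increase of $a_n$ makes these heights distinct. The value $c_s$ is chosen with $c_s\beta_k\ge s$ on the block.

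This is affordable in measure only if the towers are thin. So instead of literal towers over all of $R_s$, we exploit that any $y$ need only be hit at one time $k$ along a chain, and chains through $F_s$ can be shared: $F_s$ is the top of many columns. Then $\lambda(R_s)$ is controlled by $\sum_k \lambda(\text{column base})$, which in turn is controlled by $\delta_s\sum_k\beta_k$-type weights. Lemma~\ref{lem:finite-allocation} distributes the measure budget $\gamma_k\propto\beta_k\delta_s c_s$ among these columns.

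\textbf{Local averages and gluing.} Because every component of every $H_{s,k}$ has length $\le\eta_s$, a ball of radius $r\gg\eta_s$ around a typical $y$ meets the pieces $H_{s,k}$ in approximately their density proportions. The averaged quantity
\[
\mathcal L_r\Bigl(\beta_k\sum_{n\le k} f\circ T^{a_n}\Bigr)
\]
then picks up $\beta_k c_s$ on the portion of the ball lying in $H_{s,k}$. We choose $k$ so that this portion has density $\gtrsim 1/(\text{number of }k\text{'s})$, or use the weighted sum over the block to pick a good $k$. The joint limsup allows $r\to0$ and $k\to\infty$ freely, and the fineness $\eta_s\to0$ ensures every small $r$ is served by some later stage $s$. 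Finally, all stages are glued into a single ergodic invertible $T$ by Lemma~\ref{lem:tower-embedding}, with the total tower measure $<1$.

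\textbf{Main obstacle.} The crux is the balance between the measure cost of the towers and the gain, that is, how large $\mathcal L_r$ becomes on most of the space while $\int f$ and the tower measures stay small. I expect the divergence of $\sum\beta_k$ to enter exactly where density times weight must exceed $s$ uniformly over balls of all small radii. I would first try making the $H_{s,k}$ interleave finely, so that each ball sees every $k$, and bounding the total $\sum_k\beta_k\lambda(H_{s,k}\cap B)$ from below by $B_s\cdot\min$ density.
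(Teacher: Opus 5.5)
Your overall frame matches the paper's: stages indexed by blocks of $k$, a fine partition $R_s=\bigsqcup_k H_{s,k}$ from Lemma~\ref{lem:fine-filling} with $\lambda(H_{s,k})$ governed by $\beta_k$, heights with $c_s\beta_k\ge s$, and gluing by Lemma~\ref{lem:tower-embedding}. Your balance $B_s\,c_s\delta_s\gtrsim s$ is also the right one. The gap is that the mechanism turning this balance into a large local average is missing, and the steps you describe in its place fail.

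\textbf{The towers.} $T^{a_k}$ cannot map $H_{s,k}$ into $F_s$: $T$ preserves $\lambda$ and $\lambda(H_{s,k})\gg\delta_s$, so at most a set of measure $\delta_s$ reaches $F_s$ at time $a_k$. Giving each $k$ its own column into $F_s$ (your ``top of many columns'', with a budget split by Lemma~\ref{lem:finite-allocation}) does not help. Either each $k$ receives only a share of $\delta_s$, or, if each column has its own top of measure $\delta_s$, the $L^1$ cost is multiplied by the block length. In both cases the gain from $\sum_k\beta_k=\infty$ is lost. The paper's device is a \emph{single} tower of height $L_s+1$, with $L_s>a_{K_s}$ and top $F_s$. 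Strict monotonicity of $(a_n)$ makes the levels $L_s-a_k$ distinct. Hence the whole mass $\delta_s$ reaches $F_s$ at time $a_k$ for every $k$ in the block simultaneously. One then prescribes level $L_s-a_k$ to be a set $C_{s,k}\subset H_{s,k}$ occupying the same proportion $\alpha_{s,k}=\delta_s/\lambda(H_{s,k})$ of every component interval of $H_{s,k}$. This needs $\delta_s\le\lambda(H_{s,k})$; the remaining levels go into the unused part of $R_s$, and the total tower measure $(L_s+1)\delta_s$ is made small by taking $\delta_s$ tiny.

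\textbf{The scale.} Your scale is also wrong. At radii $r\gg\eta_s$, the fine interleaving dilutes $C_{s,k}$ to density about $\delta_s$ in the ball, so the term you are counting contributes only about $\beta_k c_s\delta_s$. A lower bound on $\sum_k\beta_k\lambda(H_{s,k}\cap B)$ is irrelevant, because the joint limsup evaluates one $k$ at a time. Instead, take $k$ to be the unique index with $x\in H_{s,k}$, and $r=2|J|$, where $J$ is the component interval of $H_{s,k}$ containing $x$. Then $J\subset B(x,r)$ and $\lambda(B(x,r))\le 4|J|$, so by positivity
\[
\mathcal L_{2|J|}\Bigl(\beta_k\sum_{n=1}^{k}f\circ T^{a_n}\Bigr)(x)\ \ge\ \beta_k c_s\,\frac{\lambda(C_{s,k}\cap J)}{4|J|}=\frac{\beta_k c_s\,\delta_s}{4\lambda(H_{s,k})}.
\]
This is at least $s$ as soon as $\lambda(H_{s,k})\le c_s\delta_s\beta_k/(4s)$. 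Covering $R_s$ by sets of this size is exactly where the large block sum $B_s$ is used; up to constants, this is your choice of $B_s$. The bound holds at every point of $R_s$ off the towers, so no Borel--Cantelli step is needed. It holds only along the radii $2|J|\le 2\eta_s$, not ``for all small $r$''. That suffices for the joint limsup, since these radii tend to $0$ and $k\to\infty$ as $s\to\infty$.
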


\begin{proof}
Without loss of generality, all intervals below are assumed to be half-open, and at the end we discard the countable set of endpoints which occurs in the construction.

Fix $0<\varepsilon<1$ and $0<\gamma<1$.  Choose positive numbers $c_{m,s}$, $m,s\in\N$, such that
\begin{equation*} 
        \sum_{m,s}c_{m,s}<\gamma.
\end{equation*}
Fix an enumeration  of $\N^2$.  For each pair $(m,s)$, put $b_{m,s}=\max\{s,k_0\}$. 
Since every tail of the divergent series  $\sum_k\beta_k$ diverges, choose $K_{m,s}\ge b_{m,s}+1$ such that
\begin{equation}\label{eq:weighted-capacity}
        \sum_{k=b_{m,s}}^{K_{m,s}}\frac{c_{m,s}\beta_k}{4m}>2.
\end{equation}
Choose an integer $L_{m,s}>a_{K_{m,s}}$. Since the sequence $\mathbf a=(a_n)$ is strictly increasing, 
the integers $L_{m,s}-a_k$, $b_{m,s}\le k\le K_{m,s}$, are  distinct elements of $\{0,\ldots,L_{m,s}\}$.
Choose $V_{m,s}>0$ sufficiently large that
\begin{equation}\label{eq:weighted-height-condition}
        V_{m,s}\ge 4m\max_{b_{m,s}\le k\le K_{m,s}}\frac{1}{\beta_k},
\end{equation}
\begin{equation}\label{eq:weighted-count-condition}
        (K_{m,s}-b_{m,s}+1) \frac{c_{m,s}}{V_{m,s}}\le\frac{1}{2},
\end{equation}
and
\begin{equation}\label{eq:weighted-mass-condition}
        (L_{m,s}+1)\frac{c_{m,s}}{V_{m,s}}<\frac{\varepsilon}{2^{m+s+4}}.
\end{equation}
For brevity, write 
\[
        \delta_{m,s}=\frac{c_{m,s}}{V_{m,s}} \ \text{ and }\  B_{m,s,k}=\frac{c_{m,s}\beta_k}{4m}.
\]
By \eqref{eq:weighted-height-condition}, it is clear that for every $  b_{m,s}\le k\le K_{m,s}$,  $\delta_{m,s}\le B_{m,s,k}$.
 
We choose the tower levels inductively along the fixed enumeration of $\N^2$.  Suppose that the levels corresponding to the preceding pairs
have already been selected, and set
\[
        R_{m,s}=[0,1]\setminus \bigcup_{(u,v)\prec(m,s)} \bigcup_{t=0}^{L_{u,v}} E_{u,v,t}.
\]
At each finite stage $(m,s)$, after discarding finitely many endpoints, $R_{m,s}$ is a finite union
of half-open intervals.  Moreover,
\[
        \lambda(R_{m,s})> 1-\sum_{u,v\ge 1}\frac{\varepsilon}{2^{u+v+4}}=1-\frac{\varepsilon}{16}> \frac{1}{2}. 
\]
By \eqref{eq:weighted-count-condition},
\[
        \sum_{k=b_{m,s}}^{K_{m,s}}\delta_{m,s}
        \le\frac{1}{2}<\lambda(R_{m,s}),
\]
whereas \eqref{eq:weighted-capacity} gives
\[
        \sum_{k=b_{m,s}}^{K_{m,s}}B_{m,s,k}
        >2>\lambda(R_{m,s}).
\]
Lemma~\ref{lem:fine-filling}, applied to $R_{m,s}$ with mesh $2^{-m-s}$, therefore gives a partition
\[
        R_{m,s}
        =
        \bigsqcup_{k=b_{m,s}}^{K_{m,s}}H_{m,s,k}
\]
modulo endpoints, where each $H_{m,s,k}$ is a finite union of half-open intervals,
\[
      \delta_{m,s} \le\lambda(H_{m,s,k}) \le B_{m,s,k},
\]
and every component interval of every $H_{m,s,k}$ has length at most $2^{-m-s}$.

 In each component interval of $H_{m,s,k}$, take the left subinterval of
relative length
\[
        \alpha_{m,s,k}=\frac{\delta_{m,s}}{\lambda(H_{m,s,k})}.
\]
Let $C_{m,s,k}$ be the union of these selected subintervals.  Then
\[
        \lambda(C_{m,s,k})=\delta_{m,s}.
\]
For each $ b_{m,s}\le k\le K_{m,s}$, prescribe
\[
        E_{m,s,L_{m,s}-a_k}=C_{m,s,k}.
\]
The remaining levels $E_{m,s,t}$, $0\le t\le L_{m,s}$, are chosen disjointly inside the part of $R_{m,s}$ which is not yet occupied  by the
prescribed sets, each of measure $\delta_{m,s}$.  This is possible since by \eqref{eq:weighted-mass-condition},
\[
        (L_{m,s}+1)\delta_{m,s} <\varepsilon 2^{-m-s-4} <\lambda(R_{m,s}).
\]
Thus all tower levels are pairwise disjoint, and
\[
        \sum_{m,s}(L_{m,s}+1)\delta_{m,s}  <\frac{\varepsilon}{16}.
\]
    
By Lemma~\ref{lem:tower-embedding}, there is an invertible ergodic Lebesgue measure-preserving transformation $T$ of $[0,1]$ such that
\[
        T(E_{m,s,t})=E_{m,s,t+1}, \quad 0\le t<L_{m,s}.
\]
Define $ f=\sum_{m,s}V_{m,s}\mathbf 1_{E_{m,s,L_{m,s}}}$.
Since the top levels are pairwise disjoint, 
\[
        \int f\dd\lambda  =\sum_{m,s}V_{m,s}\lambda(E_{m,s,L_{m,s}}) =\sum_{m,s}c_{m,s}<\gamma. 
\]
Let $Y$ be the complement of the union of all tower levels.  Then 
\[
  \lambda(Y)> 1-\frac{\varepsilon}{16}>1-\varepsilon.
\]

Now fix $x\in Y$ outside the countable set of endpoints.  For each $(m,s)$, we have $x\in R_{m,s}$, and hence $x$ belongs to a unique component
interval $J_{m,s}(x)$ of a unique set $H_{m,s,k_{m,s}(x)}$.  Fix $m\in\N$. Since $k_{m,s}(x)\ge b_{m,s}\ge s$, we have $k_{m,s}(x)\to \infty$ as $s\to \infty$.  Put
\[
        \rho_{m,s}(x)=2|J_{m,s}(x)|.
\]
Since $|J_{m,s}(x)|\le2^{-m-s}$, then $\rho_{m,s}(x)\to 0$ as $s\to \infty$. Moreover, 
\[
        J_{m,s}(x)\subset B(x,\rho_{m,s}(x)) \ \text{ and }\ \lambda(B(x,\rho_{m,s}(x)))\le 4|J_{m,s}(x)|.
\]

Write $k = k_{m,s}(x)$. By construction, $C_{m,s,k}$ occupies a proportion $\alpha_{m,s,k}$ of every component of $H_{m,s,k}$, so $C_{m,s,k} \cap J_{m,s}(x)$ has length $\alpha_{m,s,k}|J_{m,s}(x)|$. Thus
\[
        \frac{\lambda(B(x,\rho_{m,s}(x))\cap C_{m,s,k})} {\lambda(B(x,\rho_{m,s}(x)))} \ge \frac{\alpha_{m,s,k}}4. 
\]
Since $C=E_{m,s,L_{m,s}-a_k}$, by iterating the tower relations
\[
        T^{a_k}y\in E_{m,s,L_{m,s}}
\]
for a.e. $y\in C_{m,s,k}$.  Therefore
\[
        f(T^{a_k}y)\ge V_{m,s}
\]
for a.e. $y\in C_{m,s,k}$.  
It follows that 
\[
\begin{aligned}
\mathcal L_{\rho_{m,s}(x)}\left(\beta_k\sum_{n=1}^k f(T^{a_n}\cdot)\right)(x)
&  \ge  \beta_k V_{m,s} \frac{\lambda(B(x,\rho_{m,s}(x))\cap C_{m,s,k})} {\lambda(B(x,\rho_{m,s}(x)))}   \\
&  \ge  \frac{\beta_kV_{m,s}}4 \frac{\delta_{m,s}}{\lambda(H_{m,s,k})}    \\
&  \ge \frac{\beta_kV_{m,s}}4 \frac{c_{m,s}/V_{m,s}}{c_{m,s}\beta_k/(4m)} =m. 
\end{aligned}
\]
For each fixed $m$, this estimate holds along a sequence $s\to \infty$ for which $\rho_{m,s}(x)\to 0$ and $ k_{m,s}(x)\to \infty$.
Since $m$ is arbitrary, the joint local limsup is infinite at every $x\in Y$ outside the discarded countable set.
\end{proof}

\begin{defn}\label{defn:polynomial-growth}
  We say that a sequence $0\le a_1<a_2<\cdots$ has \emph{polynomial growth} if there are constants $A,D>0$ such that
\[
        a_N\le A N^D \quad (N\ge 1).
\]
After increasing $A$, we may equivalently assume that
\[
        a_N+1\le A(N+1)^D  \quad (N\ge 1).
\]
\end{defn}

Now we specialize the preceding weighted construction to the weights
\[
        \beta_N=\frac{1}{N\Lambda_q(N)}.
\]
For sequences of polynomial growth, the tower parameters can be chosen so that the resulting sweeping out function belongs to any prescribed Orlicz
space below the endpoint determined by $tL_{q+1}(t)$.

\begin{thm}\label{thm:poly-lower}
Let $q\in\N_0$, and  $\mathbf a=(a_n)$ be a strictly increasing sequence of nonnegative integers with polynomial growth.  
Let $\Phi$ be a  Young function such that
\[
        \Phi(t)=o\bigl(tL_{q+1}(t)\bigr) \quad (t\to \infty).
\]
Then the pair $(\mathbf a, N\Lambda_q(N))$ admits an \emph{$L^\Phi$ local $\infty$-sweeping out} counterexample on $[0,1]$. 

Indeed, we can prove a stronger form: for every $0<\varepsilon<1$, there exist an invertible ergodic Lebesgue measure-preserving transformation $T$ of $[0,1]$, a nonnegative function $f\in L^\Phi([0,1])$, and a measurable set $Y\subset[0,1]$ such that
\[
        \lambda(Y)>1-\varepsilon
\]
and
\[
        \limsup_{\substack{r\downarrow 0, N\to \infty}} \mathcal L_r\left( \frac{1}{N\Lambda_q(N)}\sum_{n=1}^N f(T^{a_n}\cdot)\right)(x) =\infty
\]
for a.e. $x\in Y$.
\end{thm}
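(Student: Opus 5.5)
We re-run the construction of Theorem~\ref{thm:weighted-sweeping} with the weights $\beta_k=\frac{1}{k\Lambda_q(k)}$ and $k_0=1$. By Lemma~\ref{lem:log-calc}\ref{lem:log-calc:2} we have $\sum_k\beta_k=\infty$, so that theorem already produces $T$, $f=\sum_{m,s}V_{m,s}\1_{E_{m,s,L_{m,s}}}$ and $Y$ with $\lambda(Y)>1-\varepsilon$ and infinite joint local limsup. The only new task is to show that $f\in L^\Phi$, which requires choosing the free parameters $c_{m,s},K_{m,s},L_{m,s},V_{m,s}$ more carefully.

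Because the top levels $E_{m,s,L_{m,s}}$ are disjoint and each has measure $\delta_{m,s}=c_{m,s}/V_{m,s}$, we get
\[
\int\Phi(f)\dd\lambda=\sum_{m,s}c_{m,s}\frac{\Phi(V_{m,s})}{V_{m,s}}.
\]
So it suffices to make each summand at most $2^{-m-s}$. Write $\Phi(t)\le\theta(t)\,tL_{q+1}(t)$ with $\theta(t)\to0$. Each summand is then at most $\theta(V_{m,s})\,c_{m,s}L_{q+1}(V_{m,s})$. I would choose the parameters in the following order for each pair $(m,s)$.

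First I fix $c_{m,s}$ very small; how small will be decided at the end. Next I take $K_{m,s}$ minimal with \eqref{eq:weighted-capacity}. By Lemma~\ref{lem:log-calc}\ref{lem:log-calc:2}, minimality gives
\[
c_{m,s}L_{q+1}(K_{m,s})\lesssim_q m+c_{m,s}\bigl(1+L_{q+1}(b_{m,s})\bigr).
\]
I then set $L_{m,s}=a_{K_{m,s}}+1\le A(K_{m,s}+1)^D$. Finally I take $V_{m,s}$ to be the smallest value satisfying \eqref{eq:weighted-height-condition}--\eqref{eq:weighted-mass-condition}. This gives
\[
V_{m,s}\lesssim m\,K_{m,s}\Lambda_q(K_{m,s})+A(K_{m,s}+1)^D\,2^{m+s+4}\varepsilon^{-1}c_{m,s},
\]
so $V_{m,s}\le P_{m,s}K_{m,s}^{D'}$, where $P_{m,s}$ depends only on $m,s,\varepsilon,A,D$ and not on $c_{m,s}$.

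The main obstacle is converting this polynomial bound on $V_{m,s}$ into the estimate
\[
c_{m,s}L_{q+1}(V_{m,s})\lesssim c_{m,s}L_{q+1}(K_{m,s})+c_{m,s}\,O_{m,s}(1).
\]
Lemma~\ref{lem:Psi-estimates}\ref{lem:Psi-estimates:3} only gives a multiplicative constant depending on $P_{m,s}$, so I would argue additively instead. One has $L_1(PK^{D'})\le\log P+D'L_1(K)+O(1)$. For $q+1\ge2$, a further logarithm turns this into $L_{q+1}(K)+O_{D'}(1)+o(1)$. In every case the additive error is independent of $c_{m,s}$, and after multiplying by $c_{m,s}$ it tends to $0$ as $c_{m,s}\to0$. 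Hence
\[
c_{m,s}L_{q+1}(V_{m,s})\lesssim_q m+1
\]
once $c_{m,s}$ is small enough, where "small enough" depends only on $m,s,\varepsilon,\mathbf a$.

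On the other hand, the capacity condition forces $L_{q+1}(K_{m,s})\gtrsim m/c_{m,s}$, so $V_{m,s}\ge K_{m,s}\to\infty$ as $c_{m,s}\to0$, and therefore $\theta(V_{m,s})\to0$. Shrinking $c_{m,s}$ further, I get $\theta(V_{m,s})(m+1)\le 2^{-m-s}$, and also $\sum c_{m,s}<\gamma$. This makes $\int\Phi(f)\dd\lambda<\infty$, so $f\in L^\Phi$. The rest of the proof of Theorem~\ref{thm:weighted-sweeping} applies verbatim, and since $\beta_N\sum_{n\le N}f(T^{a_n}\cdot)$ is exactly the average normalised by $N\Lambda_q(N)$, it yields the local $\infty$-sweeping out on $Y$.
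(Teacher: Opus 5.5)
Your proposal is correct and follows the paper's proof. Both arguments specialize the construction of Theorem~\ref{thm:weighted-sweeping} to $\beta_k=(k\Lambda_q(k))^{-1}$ with $K_{m,s}$ minimal and $L_{m,s}=a_{K_{m,s}}+1$, obtain $c_{m,s}L_{q+1}(V_{m,s})\lesssim m+1$ from the minimality of $K_{m,s}$ and polynomial growth, and then sum $\int\Phi(f)\dd\lambda=\sum_{m,s}c_{m,s}\Phi(V_{m,s})/V_{m,s}$. The differences are only bookkeeping. The paper takes $c_{m,s}\le 2^{-(m+s+2)}$, which cancels the $2^{m+s+4}$ in the mass condition so that Lemma~\ref{lem:Psi-estimates}\ref{lem:Psi-estimates:3} applies with a constant uniform in $(m,s)$, and it builds explicit thresholds $M_{m,s}$ and summable weights $\eta_{m,s}$ into $c_{m,s}$ and $V_{m,s}$. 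You instead shrink $c_{m,s}$ until $V_{m,s}$ is so large that $\Phi(V_{m,s})/\bigl(V_{m,s}L_{q+1}(V_{m,s})\bigr)$ is tiny. That shrinking would also absorb any $(m,s)$-dependent multiplicative constant, so your additive iterated-logarithm estimate is correct but not actually needed.
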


\begin{proof}
Choose $A,D>0$ such that
\begin{equation*}\label{eq:poly-growth-bound}
        a_N+1\le A(N+1)^D, \quad N\ge 1.
\end{equation*}
Throughout the proof, constants of the form $C_q$, or  $C_{q,A,D,\varepsilon}$, are allowed to differ on each line to avoid excessive notation.

For $ k\ge 1$, set
\[
        \beta_k=\frac{1}{k\Lambda_q(k)}.
\]
By Lemma~\ref{lem:log-calc}\ref{lem:log-calc:2}, we have 
\[
        \sum_{k=1}^{\infty}\beta_k=\infty.
\]

Choose positive numbers $\eta_{m,s}$, $m,s\in\N$, such that
\begin{equation}\label{eq:poly-eta}
        \sum_{m,s\ge 1}(m+1)\eta_{m,s}<\infty.
\end{equation}
Fix $q\in\N_0$. By assumption,
\[
        \frac{\Phi(t)}{tL_{q+1}(t)} \to 0, \quad t\to \infty,
\]
so for every $(m,s)$ there exists $M_{m,s}\ge 1$ such that  
\begin{equation}\label{eq:poly-Phi-control}
        \frac{\Phi(t)}t\le \eta_{m,s}L_{q+1}(t)
\end{equation}
for all $t\ge M_{m,s}$.

Fix an enumeration  of $\N^2$. For a pair $(m,s)$, put $  b_{m,s}=s$.   Choose $c_{m,s}>0$ so that
\begin{equation}\label{eq:poly-c-choice}
 c_{m,s}=\min\left\{2^{-(m+s+2)},\frac{1}{1+L_{q+1}(M_{m,s})},\frac{1}{1+L_{q+1}(b_{m,s})}\right\}.
\end{equation}
Let $K_{m,s}$ be the minimal integer $K\ge b_{m,s}$ such that
\begin{equation}\label{eq:poly-capacity}
        \frac{c_{m,s}}{4m} \sum_{k=b_{m,s}}^K \frac{1}{k\Lambda_q(k)}
        >2.
\end{equation}
Such an integer exists because every tail of $\sum_{k=1}^{\infty}\frac{1}{k\Lambda_q(k)}$ diverges.  Since the left-hand side of
\eqref{eq:poly-capacity} is smaller than $2$ when $K=b_{m,s}$, it follows that $K_{m,s}>b_{m,s}$.
By minimality,  
\[
        c_{m,s}\sum_{k=b_{m,s}}^{K_{m,s}-1} \frac{1}{k\Lambda_q(k)}\le 8m.
\]
Thus, noting that $\beta_k\le 1$, we have 
\begin{equation}\label{eq:poly-tail-bound}
        c_{m,s} \sum_{k=b_{m,s}}^{K_{m,s}} \frac{1}{k\Lambda_q(k)}\le 8m+1.
\end{equation}
Moreover, by Lemma~\ref{lem:log-calc}\ref{lem:log-calc:2} and \eqref{eq:poly-c-choice}, there is a constant $C_q>0$ such that
\[
        c_{m,s} \sum_{k=1}^{b_{m,s}-1}\frac{1}{k\Lambda_q(k)}\le C_q,
\]
where the sum is understood to be zero when $b_{m,s}=1$.
Consequently, for sufficiently large  $C_q>0$, we have
\[
        c_{m,s}\left(1+\sum_{k=1}^{K_{m,s}}\frac{1}{k\Lambda_q(k)}\right)\le C_q(m+1).
\]
Since $c_{m,s}\beta_{K_{m,s}+1}\le c_{m,s}<1$, the same estimate holds with $K_{m,s}+1$ in place of $K_{m,s}$.  
Therefore the lower estimate in Lemma~\ref{lem:log-calc}\ref{lem:log-calc:2} yields
\begin{equation}\label{eq:poly-K-log}
        c_{m,s}\bigl(1+L_{q+1}(K_{m,s}+1)\bigr) \le C_q(m+1)
\end{equation}
for large  $C_q>0$.

For fixed $0<\varepsilon<1$,  set $L_{m,s}=a_{K_{m,s}}+1$ and
\begin{equation}\label{eq:poly-V-choice}
        V_{m,s}=\max\biggl\{M_{m,s}, 4m K_{m,s}\Lambda_q(K_{m,s}), 2(K_{m,s}-b_{m,s}+1)c_{m,s}, \frac{2(L_{m,s}+1)c_{m,s}} {\varepsilon2^{-m-s-4}} \biggr\}.
\end{equation}
Since each $L_j$ is increasing, $k\longmapsto k\Lambda_q(k)$ is increasing on $[0,\infty)$.  Hence
\[
        V_{m,s}\ge 4m\max_{b_{m,s}\le k\le K_{m,s}}\frac{1}{\beta_k}.
\]
The third and fourth terms in \eqref{eq:poly-V-choice} give
\[
        (K_{m,s}-b_{m,s}+1)\frac{c_{m,s}}{V_{m,s}}\le\frac{1}{2}
\]
and
\[
        (L_{m,s}+1)\frac{c_{m,s}}{V_{m,s}}< \frac{\varepsilon}{2^{m+s+4}}.
\]
Together with \eqref{eq:poly-capacity}, these are precisely the parameter  conditions  \eqref{eq:weighted-capacity}\eqref{eq:weighted-height-condition}\eqref{eq:weighted-count-condition}\eqref{eq:weighted-mass-condition} 
required in the construction from the proof of Theorem~\ref{thm:weighted-sweeping}.

We next claim that there is $C_{q,A,D,\varepsilon}>0$ such that 
\begin{equation}\label{eq:poly-V-log}
        c_{m,s}L_{q+1}(V_{m,s})\le C_{q,A,D,\varepsilon}(m+1).
\end{equation}
It suffices to check the four terms in \eqref{eq:poly-V-choice} separately.
The first term in \eqref{eq:poly-V-choice} is controlled by \eqref{eq:poly-c-choice}.  
For the second term, first note that
\[
        \Lambda_q(K)\le C_q(K+1)^q, \quad K\ge 1.
\]
Thus, combining Lemma~\ref{lem:log-calc}\ref{lem:log-calc:3} and Lemma~\ref{lem:Psi-estimates}\ref{lem:Psi-estimates:3}, we obtain  
\[
   L_{q+1}\bigl(4m K_{m,s}\Lambda_q(K_{m,s}) \bigr) \le C_q\bigl(1+L_{q+1}(m+1)+L_{q+1}(K_{m,s}+1)\bigr)
\]
for some large $C_q>0$. After multiplication by $c_{m,s}$, one can verify that this is bounded by $C_q(m+1)$ by \eqref{eq:poly-K-log}.  
Similarly, for the third term, since $c_{m,s}\le 1$, 
\[
        2(K_{m,s}-b_{m,s}+1)c_{m,s} \le 2 K_{m,s},
\]
and therefore 
\[
   c_{m,s} L_{q+1}\bigl(2(K_{m,s}-b_{m,s}+1)c_{m,s} \bigr)\le C_q(m+1).
\]
for some large $C_q>0$.
For the last term, by $c_{m,s}<2^{-m-s}$ and
\eqref{eq:poly-growth-bound}, we get 
\[
        \frac{2(L_{m,s}+1)c_{m,s}}{\varepsilon2^{-m-s-4}}\le \frac{32}{\varepsilon}(L_{{m,s}}+1) \le  C_{A,\varepsilon}(K_{m,s}+1)^D.
\]
Using Lemma~\ref{lem:Psi-estimates}\ref{lem:Psi-estimates:3}, Lemma~\ref{lem:log-calc}\ref{lem:log-calc:3} and \eqref{eq:poly-K-log},  the required bound for this term follows. This proves the claim.

Applying the parameterized construction from the proof of Theorem~\ref{thm:weighted-sweeping}, with $k_0=1$ and $\delta_{m,s} =\frac{c_{m,s}}{V_{m,s}}$,
we obtain an invertible ergodic Lebesgue measure-preserving transformation $T$, a  measurable set $Y\subset[0,1]$ with $\lambda(Y)>1-\varepsilon$,
and pairwise disjoint tower tops such that
\[
        f=\sum_{m,s\ge 1} V_{m,s}\mathbf1_{E_{m,s,L_{m,s}}} \  \text{ and }\    \lambda(E_{m,s,L_{m,s}})=\frac{c_{m,s}}{V_{m,s}}.
\]
Since $V_{m,s}\ge M_{m,s}$,
\eqref{eq:poly-Phi-control} and \eqref{eq:poly-V-log} yield
\[
\begin{aligned}
        \int_{[0,1]}\Phi(f)\dd\lambda
        &= \sum_{m,s\ge 1} \frac{c_{m,s}}{V_{m,s}} \Phi(V_{m,s})\\
        &\le\sum_{m,s\ge 1} c_{m,s}\eta_{m,s} L_{q+1}(V_{m,s})\\
        &\le C_{q,A,D,\varepsilon}  \sum_{m,s\ge 1}\eta_{m,s}(m+1)  <\infty.
\end{aligned}
\]
Hence $f\in L^\Phi([0,1])$.

The conclusion of the weighted construction with
$\beta_k=(k\Lambda_q(k))^{-1}$ yields
\[
        \limsup_{\substack{r\downarrow 0,  k\to \infty}} \ \mathcal L_r\left(\frac{1}{k\Lambda_q(k)}\sum_{n=1}^k f(T^{a_n}\cdot) \right)(x)=\infty
\]
for a.e. $x\in Y$.  This proves the required assertion.
\end{proof}

The following abstract endpoint statement is an immediate consequence of Proposition~\ref{prop:RME-local} and Theorem~\ref{thm:poly-lower}.

\begin{coro}\label{cor:abstract-sharp}
Let $q\in\N_0$, and let $\mathbf a=(a_n)$ be a strictly increasing sequence of nonnegative integers with polynomial growth. 
If the following two conditions hold: 
\begin{enumerate}
  \item [(a)]
  for every metric probability space $(X,d,\mu)$ with the LDP and every measure-preserving transformation $T$, 
         the sequence $\mathbf a$ satisfies $\RME_q$ on $(X,\B_X,\mu,T)$;
\item [(b)]
  the normalized temporal averages associated with $(\mathbf a,\mathbf D^{(q)})$ converge almost everywhere for every $f\in L\log_{q+1}L$.
\end{enumerate} 
Then $L\log_{q+1}L$ is the sharp Orlicz endpoint for the corresponding local theorem.  More precisely:
\begin{enumerate}[label=\textup{(\roman*)},leftmargin=*]
\item on every metric probability space with the LDP, the normalized
averages are locally stable for every $f\in L\log_{q+1}L$;

\item for every  Young function $\Phi$ satisfying
\[
        \Phi(t)=o\bigl(tL_{q+1}(t)\bigr) \quad(t\to \infty),
\]
the pair $(\mathbf a,\mathbf D^{(q)})$ admits an $L^\Phi$ local $\infty$-sweeping out counterexample on $[0,1]$.
\end{enumerate}
\end{coro}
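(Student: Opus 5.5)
The corollary splits into its two assertions, each of which is essentially a direct application of an earlier result. The plan is to verify that the hypotheses of those results are met.

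For (i), fix a metric probability space $(X,d,\mu)$ with the LDP, a measure-preserving $T$, and $f\in L\log_{q+1}L=L^{\Psi_{q+1}}$. By hypothesis (b), the averages
\[
A_N^{\mathbf a,\mathbf D^{(q)}}f=\frac{1}{N\Lambda_q(N)}\sum_{n=1}^N f(T^{a_n}\cdot)=A_N^{\mathbf a,q}f
\]
converge a.e. to some limit $F$. I first need $F\in L^1(\mu)$, which is required in Proposition~\ref{prop:RME-local}. By hypothesis (a) and Proposition~\ref{prop:RME-lift}, $M_{\mathbf a,q}f\in L^1(\mu)$. Since $|A_N^{\mathbf a,q}f|\le M_{\mathbf a,q}f$ for every $N$, we get $|F|\le M_{\mathbf a,q}f$ a.e., so $F\in L^1$. (Measurability of $F$ is automatic as an a.e. limit of measurable functions.) Proposition~\ref{prop:RME-local} then gives local stability of the convergence $A_N^{\mathbf a,q}f\to F$. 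Equivalently, one can apply Theorem~\ref{thm:local-stability-principle} directly with $H=M_{\mathbf a,q}f$. This proves condition (a) in the definition of sharp Orlicz endpoint with $s=q+1$.

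For (ii), the hypotheses of Theorem~\ref{thm:poly-lower} are exactly that $\mathbf a$ is strictly increasing with polynomial growth, $q\in\N_0$, and $\Phi(t)=o(tL_{q+1}(t))$. That theorem therefore yields, for every $\varepsilon$, an invertible ergodic $T$ on $[0,1]$, a nonnegative $f\in L^\Phi$, and $Y$ with $\lambda(Y)>1-\varepsilon>0$ on which the joint local limsup is infinite. This is precisely an $L^\Phi$ local $\infty$-sweeping out counterexample for $(\mathbf a,\mathbf D^{(q)})$, which gives condition (b) of the definition with $s=q+1$.

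The only step requiring any care is the integrability of the limit $F$ in (i), since hypothesis (b) asserts only a.e. convergence. This is handled by the maximal majorant above. Combining (i) and (ii) shows that $L\log_{q+1}L$ is the sharp Orlicz endpoint.
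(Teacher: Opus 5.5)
Your proposal is correct and follows the paper's own argument: (i) is obtained from Proposition~\ref{prop:RME-local}, and (ii) from Theorem~\ref{thm:poly-lower}. Your extra check that the a.e.\ limit $F$ is integrable, via $|F|\le M_{\mathbf a,q}f\in L^1(\mu)$ from Proposition~\ref{prop:RME-lift}, supplies a hypothesis of Proposition~\ref{prop:RME-local} that the paper leaves implicit when it calls the corollary an immediate consequence.
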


\subsection{Birkhoff averages along consecutive times}\label{subsec:birkhoff}
We now apply Corollary~\ref{cor:abstract-sharp} to the consecutive sequence. 
For a measure-preserving system $(X,\B_X,\mu,T)$, define
\[
        A_Nf=\frac{1}N\sum_{n=0}^{N-1}f\circ T^n  \ \text{ and }\  B_N^{\, q}f=\frac{A_Nf}{\Lambda_q(N)}.
\]

\begin{prop}\label{prop:birkhoff-RME}
Let $q\in\N_0$.  There exists a constant $C_q>0$, independent of the
measure-preserving system, such that
\[
        \int_X \sup_{N\ge 1}\frac{A_N\1_E}{\Lambda_q(N)}\dd \mu \le C_q\mu(E) \left(1+L_{q+1}\left(\frac{1}{\mu(E)}\right)\right)
\]
for every $E\in\B_X$, where the right-hand side is interpreted as zero if $\mu(E)=0$.  
\end{prop}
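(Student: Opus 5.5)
This is a direct application of Proposition~\ref{prop:weak-RME} to the consecutive sequence $a_n=n-1$. The only input needed is the classical maximal ergodic inequality, with a constant that does not depend on the system. We observe that $A_N$ in this subsection is exactly $A_N^{\mathbf a}$ for $\mathbf a=(n-1)_{n\ge1}$, so $\sup_N A_N\1_E/\Lambda_q(N)=M_{\mathbf a,q}\1_E$. The sequence $a_n=n-1$ starts at $0$, but Definition~\ref{def:RME} and Proposition~\ref{prop:weak-RME} allow arbitrary nonnegative integer sequences, so no shift is needed.

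The key step is the Hopf--Yosida--Kakutani maximal ergodic inequality. For every measure-preserving system and every $g\in L^1(\mu)$,
\[
\mu\Bigl(\Bigl\{\sup_{N\ge1}A_N|g|>\alpha\Bigr\}\Bigr)\le\frac{1}{\alpha}\|g\|_{L^1(\mu)},\qquad \alpha>0.
\]
This is the statement that $A_{\mathbf a}^*$ is uniformly of weak type $(1,1)$ with $C_w=1$. I would quote it as standard, or recall the short Garsia proof: applied to $|g|-\alpha$, it gives $\int_{\{\sup_N A_N|g|>\alpha\}}(|g|-\alpha)\,\dd\mu\ge0$. That proof uses only $T_*\mu=\mu$, not invertibility or ergodicity, and the constant is visibly independent of the system.

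With this weak type bound in hand, Proposition~\ref{prop:weak-RME} (uniform version) gives $\RME_q$ uniformly over measure-preserving systems for every $q\in\N_0$. The constant depends only on $q$ and $C_w=1$, which is precisely the asserted inequality with the convention for $\mu(E)=0$.

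There is no real obstacle here. The only point to check is that the constant $C_{q,w}$ produced in the proof of Proposition~\ref{prop:weak-RME} depends only on $q$ and $C_w$. That proof shows this: it uses the comparison $\Lambda_q(m)\asymp_q\Lambda_q(s/\delta)$ and Lemma~\ref{lem:log-calc}, both of which are system-free. So the final $C_q$ is independent of the system, as claimed.
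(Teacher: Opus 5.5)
Your proposal is correct and is essentially the paper's proof: the paper also quotes the maximal ergodic inequality with the system-independent weak-type constant $1$ and then invokes the uniform version of Proposition~\ref{prop:weak-RME} for $a_n=n-1$. Your additional remarks on the Garsia argument and on the constant $C_{q,w}$ depending only on $q$ and $C_w$ are accurate.
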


\begin{proof}
By the maximal ergodic theorem \cite[Theorem~2.24]{EinsiedlerWard}, for every $g\in L^1(\mu)$ and every $\alpha>0$,
\[
        \mu\left(\left\{x\in X: \sup_{N\ge 1}A_N|g|(x)>\alpha\right\}\right)\le \frac{\|g\|_1}{\alpha}.
\]
The weak-type constant $1$ is independent of the system.  The assertion now follows from Proposition~\ref{prop:weak-RME}.
\end{proof}

Now we are ready to prove \ref{thm:regular-endpoints}\ref{thm:regular-endpoints:1}.

\begin{proof}[Proof of \ref{thm:regular-endpoints}\ref{thm:regular-endpoints:1}]
By Proposition~\ref{prop:birkhoff-RME},  the consecutive sequence satisfies $\RME_q$, $q\in \N_0$, uniformly over all measure-preserving systems.
For $q=0$,  Birkhoff's pointwise ergodic theorem yields that for every $f\in L^{\Psi_1} =L\log L \subset L^1$,
\[
        B_N^{(0)}f=A_Nf \to  \mathbb E(f\mid\I) \quad \text{ a.e.},
\]
where $\I$ denotes the $T$-invariant $\sigma$-algebra.  
For $q\ge 1$, using Theorem \ref{thm:RME-convergence-zero}, we have 
\[
B_N^{\, q}f \to 0  \quad \text{ a.e. }
\]
for all $f\in L^{\Psi_{q+1}}= L\log_{q+1}L$.
Thus the sharp  $ L\log_{q+1}L $ endpoint conclusion follows from Corollary~\ref{cor:abstract-sharp}, applied to the polynomial growth sequence $a_n=n-1$.
\end{proof}

\begin{rem}\label{rem:young-questions}
For $q=0$,  \ref{thm:regular-endpoints}\ref{thm:regular-endpoints:1} shows that on every metric probability space satisfying the LDP, 
the Birkhoff averages converge locally stably for every $f\in L\log L$.  More precisely,
\[
        \lim_{\substack{r\downarrow 0, N\to \infty}}  \mathcal L_r(A_Nf)(x)=\mathbb E(f\mid\I)(x)
\]
for a.e. $x$.
In particular, since $\mu(X)=1$, $L^p \subset L\log L $ for every $p>1$, so the same conclusion holds for every $f\in L^p(\mu)$, $p>1$.
On the other hand, \ref{thm:regular-endpoints}\ref{thm:regular-endpoints:1}  also provides a counterexample on the Lebesgue interval showing that the conclusion
does not hold for all $f\in L^1$.
Consequently,   Question~9 of \cite{Young25} has an affirmative answer under the LDP alone, in fact at the sharp $L\log L$ Orlicz endpoint, while Question~10 of \cite{Young25} has a negative answer.
\end{rem}

In the usual pointwise strong sweeping out formulation, both the upper and lower limits are prescribed, typically as $1$ and $0$.  
Our local sweeping out notion only requires the joint upper limit to be infinite, so it is natural to ask about the corresponding lower limit.  
The next proposition shows that for nonnegative $L^1$-functions, it is determined by the Birkhoff limit. 

\begin{prop}\label{prop:birkhoff-joint-liminf}
Let $(X,d,\mu)$ be a metric probability space with the LDP, $T\colon X\to X$ be measure-preserving, and $f\in L^1(\mu)$ be nonnegative.  Then
\[
        \liminf_{\substack{r\downarrow 0,  N\to \infty}}\mathcal L_r(A_Nf)(x)=\mathbb E(f\mid\I)(x)
\]
for a.e. $x$, where $\I$ denotes the $T$-invariant $\sigma$-algebra.

In particular,  if $T$ is additionally ergodic and $f\not\equiv0$ in $L^1(\mu)$, then
\[
        \liminf_{\substack{r\downarrow 0, N\to \infty}}\mathcal L_r(A_Nf)(x)=\int_X f \dd\mu>0
\]
for a.e. $x$.
\end{prop}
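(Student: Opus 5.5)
We prove the two inequalities separately, writing $\bar f=\mathbb E(f\mid\I)$. The upper bound is immediate from the positive theory. The lower bound is a truncation argument, made possible by the fact that only a liminf is required, so no integrable majorant is needed.

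\emph{Upper bound.} For $K>0$ put $f_K=\min\{f,K\}\in L^\infty\subset L\log L$. By \ref{thm:regular-endpoints}\ref{thm:regular-endpoints:1} with $q=0$ (equivalently, Theorem~\ref{thm:local-stability-principle} with majorant $H=K$ and Birkhoff's theorem), $\mathcal L_r(A_Nf_K)\to\mathbb E(f_K\mid\I)$ in the joint limit for a.e.\ $x$. We cannot, however, bound $\mathcal L_r(A_N f)$ from above by this: in general the liminf could be smaller than the limit along the truncations, so positivity only helps in the lower direction. For the upper bound we instead exhibit a particular path. Fix $N$ and let $r\downarrow0$. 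By the LDP applied to $A_Nf\in L^1$ (countably many $N$), $\mathcal L_r(A_Nf)(x)\to A_Nf(x)$ for a.e.\ $x$. Hence for every $N$ there are arbitrarily small $r$ with $\mathcal L_r(A_Nf)(x)\le A_Nf(x)+1/N$. Since $A_Nf(x)\to\bar f(x)$ a.e., the joint liminf is at most $\bar f(x)$.

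\emph{Lower bound.} By positivity and linearity, $\mathcal L_r(A_Nf)\ge\mathcal L_r(A_Nf_K)$ for every $K$. Taking the joint liminf and using the locally stable convergence for the bounded function $f_K$ gives
\[
\liminf_{r\downarrow0,\,N\to\infty}\mathcal L_r(A_Nf)(x)\ge\mathbb E(f_K\mid\I)(x)
\]
for a.e.\ $x$. We intersect these full-measure sets over $K\in\N$. By conditional monotone convergence, $\mathbb E(f_K\mid\I)\uparrow\bar f$ a.e., which yields the lower bound. Note that the joint limit for $f_K$ must be a genuine joint limit for this step to work, and that is exactly what local stability provides.

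For the ergodic case, $\I$ is trivial, so $\bar f=\int f\dd\mu$, which is positive because $f\ge0$ and $f\not\equiv0$. The only delicate point in the whole argument is the bookkeeping of null sets: they must be taken countable, over $N$ in the upper bound and over $K$ in the lower bound.
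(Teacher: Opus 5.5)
Your proof is correct and follows essentially the same route as the paper. The lower bound uses the truncations $\min\{f,K\}$, local stability for these bounded functions (Theorem~\ref{thm:local-stability-principle} with a constant majorant together with Birkhoff's theorem), positivity and conditional monotone convergence; the upper bound uses a diagonal path $r_N\downarrow 0$ chosen via the LDP for each $A_Nf$ at a point where Birkhoff's theorem holds, and the only difference from the paper is that you present the two inequalities in the opposite order.
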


\begin{proof}
For $M\in \N$, define $ f_M(x)=\min\{f(x),M\}$. The functions $f_M$ are bounded and increase pointwise to $f$, and $0\le A_Nf_M\le M$ for every $N\in\N$.  
Hence by  Birkhoff's pointwise ergodic theorem and Theorem~\ref{thm:local-stability-principle}, we have 
\[
        \lim_{\substack{r\downarrow 0, N\to \infty}}\mathcal L_r(A_Nf_M)(x) = \mathbb E(f_M\mid\I)(x)  \quad \text{ a.e.}
\]
for every $M$.  Since $0\le f_M\le f$, the positivity of $A_N$ yields 
\[
        \liminf_{\substack{r\downarrow 0, N\to \infty}} \mathcal L_r(A_Nf)(x) \ge \mathbb E(f_M\mid\I)(x) \quad \text{ a.e.}
\]
for every $M$.
Letting $M\to \infty$ and using monotone convergence for conditional expectations, we have
\[
        \liminf_{\substack{r\downarrow 0, N\to \infty}} \mathcal L_r(A_Nf)(x) \ge \mathbb E(f\mid\I)(x) \quad \text{ a.e.}
\]

For the reverse inequality, fix a point $x$ in the full-measure set on which Birkhoff's pointwise ergodic theorem holds for $f$ and 
the Lebesgue differentiation conclusion holds for every $A_Nf$.
For each $N\in \N$, choose $0<r_N<N^{-1}$ such that
\[
        \bigl|\mathcal L_{r_N}(A_Nf)(x)-A_Nf(x)\bigr|<N^{-1}.
\]
Then $r_N\to 0$, and
\[
        \mathcal L_{r_N}(A_Nf)(x) \to  \mathbb E(f\mid\I)(x).
\]
Therefore, 
\[
        \liminf_{\substack{r\downarrow 0, N\to \infty}} \mathcal L_r(A_Nf)(x) \le \mathbb E(f\mid\I)(x).
\]
Combining the two inequalities proves the asserted identity.

Finally, if $T$ is ergodic, $f\ge 0$ and $f\not\equiv 0$, then
\[
        \mathbb E(f\mid\I)=\int_X f\dd\mu >0 \quad\text{a.e.}
\]
The whole proof follows.
\end{proof}

\subsection{Averages along prime Times}\label{subsec:prime-endpoint}

In this subsection, we apply Corollary~\ref{cor:abstract-sharp} to the prime sequence and prove \ref{thm:regular-endpoints}\ref{thm:regular-endpoints:2}.
Starting from Trojan's restricted endpoint estimate, we establish the uniform $\RME_q$ property for every $q\ge 1$, deduce the sharp endpoint, and
show that uniform $\RME_q$ is strictly weaker than uniform weak type $(1,1)$. 
Throughout this subsection, positive constants are allowed to take different values on different lines to avoid excessive notation.

Let $\mathbf p=(p_n)_{n\ge 1}$ be the increasing enumeration of the primes.
For $R\ge 2$, set
\[
        \mathcal P_R f(x)= \frac1{\pi(R)}\sum_{\substack{p\le R,\  p\ {\rm prime}}}f(T^p x),
\]
where $\pi(R)$ is the number of primes not exceeding $R$, and define
\[
        P^*f(x) = \sup_{N\ge 1}\frac1N\sum_{n=1}^N |f|(T^{p_n}x).
\]
Since $\pi(R)=N$ for $p_N\le R<p_{N+1}$,
\[
        P^*f=\sup_{R\ge 2}\mathcal P_R|f|.
\]

\begin{prop}\label{prop:trojan-prime-estimate}
There exists a constant $C>0$  such that for every measure-preserving system $(X,\B_X,\mu,T)$, every $E\in\B_X$ and every $0<\alpha<1$, we have
\[
        \mu\left\{ x\in X: \sup_{R\ge 2}\mathcal P_R\1_E(x)>\alpha \right\}\le C\frac{\mu(E)}{\alpha} \log^2\left(\frac{\e}{\alpha}\right).
\]
\end{prop}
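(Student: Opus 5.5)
This proposition is Trojan's restricted endpoint estimate for the prime maximal function, so the plan is to import it with a reduction rather than reprove it. Trojan's estimate is stated on $\Z$ for $\mathcal P_R$ acting on indicator functions of finite sets. It controls the size of the superlevel set by $|F|\alpha^{-1}\log^2(\e/\alpha)$, or an equivalent form with a $\log\log$-type refinement that is weaker than $\log^2$. The strategy is to use that integer estimate as a black box and transfer it to an arbitrary measure-preserving system by Calderón's transference principle. Transference keeps the constant independent of the system, which is what the uniform statement requires.

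Fix $E$, $\alpha\in(0,1)$ and a finite truncation $R_0$, and consider $\sup_{2\le R\le R_0}\mathcal P_R\1_E$; monotone convergence in $R_0$ then recovers the full supremum. For $J\gg R_0$ and a.e.\ $x$, define the orbit set
\[
F_x=\{0\le j<J+R_0: T^jx\in E\}\subset\Z .
\]
For $0\le j<J$, the inequality $\sup_{R\le R_0}\mathcal P_R\1_E(T^jx)>\alpha$ holds exactly when $j$ lies in the superlevel set of the integer maximal function $\sup_{R\le R_0}\mathcal P_R^{\Z}\1_{F_x}$. The integer estimate therefore bounds the number of such $j$ by
\[
C\,|F_x|\,\alpha^{-1}\log^2(\e/\alpha).
\]
Integrating over $x$ and using $T$-invariance of $\mu$ gives
\[
J\,\mu\{\cdots>\alpha\}\le C(J+R_0)\,\mu(E)\,\alpha^{-1}\log^2(\e/\alpha).
\]
Dividing by $J$ and letting $J\to\infty$ proves the proposition with the same constant $C$ for every system.

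The main obstacle is matching the exact form of Trojan's endpoint bound. If Trojan states a restricted weak-type bound with a different logarithmic weight, I would show it is at least as strong as $\log^2(\e/\alpha)$ on $0<\alpha<1$ and absorb the difference into $C$. If the bound is stated instead as an Orlicz maximal estimate $L\log L\log\log\log L\to L^{1,\infty}$ or similar, I would evaluate it at $f=\1_E$. For an indicator, the Luxemburg norm is $\|\1_E\|_\Phi=1/\Phi^{-1}(1/\mu(E))$. Applying the dilation $\1_E/\alpha$ and rescaling then gives a bound of the form $\mu(E)\alpha^{-1}\cdot(\text{logs of }1/\alpha)$, which is dominated by $\log^2(\e/\alpha)$. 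A secondary point is that Trojan's averages may carry the weights $\log p$ or use the normalization $1/R$. This is handled by summation by parts together with the prime number theorem $\pi(R)\asymp R/\log R$, which passes between the normalizations at the cost of constant factors.
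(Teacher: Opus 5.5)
Your main line is correct and is essentially the paper's approach: both import Trojan's restricted estimate for indicators rather than reprove it. The paper cites Trojan's Proposition~7.3 directly. That result is already formulated for an arbitrary measure-preserving $T$, and the paper justifies the system-independence of $C$ by pointing to Trojan's proof.

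You instead start from a version on $\Z$ for finite sets and run Calder\'on transference: orbit sets of length $J+R_0$, integrate in $x$ using invariance of $\mu$, divide by $J$, let $J\to\infty$, then let $R_0\to\infty$ by continuity from below. That argument is sound. If the integer averages are written with $f(j-p)$, you need a harmless reflection $j\mapsto -j$, since $T$ is not assumed invertible. The route buys something: uniformity in the system follows automatically from the absolute constant on $\Z$, rather than resting on an inspection of Trojan's proof.

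Your conversion from $\log p$-weighted averages is also fine. Partial summation with Chebyshev's bound $\pi(R)\gtrsim R/\log R$ gives $P^*g\lesssim\sup_{R\ge 2}R^{-1}\sum_{p\le R}(\log p)\,g\circ T^p$ pointwise for $g\ge 0$. The resulting change $\alpha\mapsto\alpha/C$ is absorbed into the constant.

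Your fallback, however, would fail and should be dropped. The Orlicz endpoint Trojan proves for $P^*$ is $L(\log L)^2(\log\log L)\to L^{1,\infty}$. In modular form, applied to $f=\1_E/\alpha$, it gives $\mu\{P^*\1_E>\alpha\}\lesssim\mu(E)\alpha^{-1}\log^2(\e/\alpha)\log\log(1/\alpha)$ for small $\alpha$. This overshoots the target by a factor $\log\log(1/\alpha)$. In the homogeneous form $\|P^*f\|_{L^{1,\infty}}\le C\|f\|_{\Phi}$, the dilation by $\alpha$ cancels on both sides. Then, via $\|\1_E\|_\Phi=1/\Phi^{-1}(1/\mu(E))$, the logarithms land on $1/\mu(E)$ rather than on $1/\alpha$. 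For the same reason, a restricted bound carrying an extra $\log\log(1/\alpha)$ factor would not suffice either. The indicator estimate with exactly $\alpha^{-1}\log^2(\e/\alpha)$ is the sharper input, so you must invoke Trojan's Proposition~7.3 itself, not his Orlicz theorem.
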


\begin{proof}
This estimate comes from \cite[Proposition~7.3]{Trojan}.  The formulation there requires only that $T$ be measure-preserving, and its proof shows that the constant is independent of the underlying system.
\end{proof}

\begin{prop}\label{prop:trojan-strong-prime}
There exists a constant $C>0$ such that for every measure-preserving system $(X,\B_X,\mu,T)$ and every $E\in\B_X$, we have
\[
        \int_X P^*\1_E \dd\mu \le C\mu(E)\left(1+L_1\left(\frac{1}{\mu(E)}\right)^3\right),
\]
where the right-hand side is understood as $0$ when $\mu(E)=0$.
\end{prop}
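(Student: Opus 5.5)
We integrate the distribution of $g=P^*\1_E$ using Proposition~\ref{prop:trojan-prime-estimate}. Put $\delta=\mu(E)$; the case $\delta=0$ is trivial. Since $0\le g\le 1$, the layer-cake formula gives
\[
        \int_X g\dd\mu=\int_0^1\mu(\{g>\alpha\})\dd\alpha .
\]
By the identity $P^*\1_E=\sup_{R\ge2}\mathcal P_R\1_E$ noted before Proposition~\ref{prop:trojan-prime-estimate}, we may apply that estimate for every $0<\alpha<1$:
\[
        \mu(\{g>\alpha\})\le \min\Bigl\{1,\ C\frac{\delta}{\alpha}\log^2\Bigl(\frac{\e}{\alpha}\Bigr)\Bigr\}.
\]
If $\delta\ge 1/8$, the bound $\int g\le 1\le 8\delta$ already suffices after enlarging $C$, so we assume $0<\delta<1/8$.

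We split the $\alpha$-integral at $\alpha=\delta$. On $(0,\delta)$ we use the trivial bound $1$, which contributes $\delta$. On $(\delta,1)$ we use Trojan's bound and substitute $u=\log(\e/\alpha)$, so that $\dd\alpha/\alpha=-\dd u$ and $u$ runs over $[1,\log(\e/\delta)]$:
\[
        C\delta\int_\delta^1\log^2\Bigl(\frac{\e}{\alpha}\Bigr)\frac{\dd\alpha}{\alpha}
        =C\delta\int_1^{\log(\e/\delta)}u^2\dd u\le \frac{C}{3}\,\delta\log^3\Bigl(\frac{\e}{\delta}\Bigr).
\]
Finally, $\log(\e/\delta)=1+\log(1/\delta)\le 1+L_1(1/\delta)$. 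Hence $\log^3(\e/\delta)\lesssim 1+L_1(1/\delta)^3$, and adding the contribution $\delta$ from the first range gives the asserted bound.

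The only point needing care is the uniformity of the constant over all measure-preserving systems. This is inherited directly from Proposition~\ref{prop:trojan-prime-estimate}, since every other step is pure calculus in $\alpha$. The choice of splitting point matters mildly: it must be taken at $\alpha\approx\delta$ so that the truncation by $1$ absorbs the nonintegrable singularity $\alpha^{-1}$ near $0$. The cube of the logarithm arises from integrating $\log^2(\e/\alpha)$ against $\dd\alpha/\alpha$.
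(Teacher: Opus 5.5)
Your proposal is correct and matches the paper's proof essentially step for step: the layer-cake formula on $[0,1]$, the trivial bound on $(0,\mu(E))$, Trojan's distributional estimate on $(\mu(E),1)$, and the antiderivative $\tfrac13\log^3(\e/\alpha)$. Your separate case $\delta\ge 1/8$ is harmless but unnecessary, since splitting at $\alpha=\delta$ already works for every $0<\delta\le 1$.
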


\begin{proof}
First assume that $\mu(E)=0$.  For every $n\ge 1$, by measure preservation of $T$ we have
\[
        \mu\left(\left\{x\in X:\1_E(T^{p_n}x)=1 \right\}\right)=\mu(T^{-p_n}E)=\mu(E)= 0.
\]
It follows that 
\[
        \sup_{n\ge 1}\1_E(T^{p_n}x)=0 \quad \text{ a.e. } \ x.
\]
Hence $ P^*\1_E=0$ a.e., and the estimate follows.

Now assume that $0<\mu(E)\le 1$.  Since $0\le P^*\1_E\le 1$, by the layer-cake formula
and Proposition~\ref{prop:trojan-prime-estimate} we have 
\[
\begin{aligned}
        \int_XP^*\1_E \dd\mu &= \int_0^1\mu\left(\{x\in X: P^*\1_E(x)>\alpha\} \right)\dd\alpha\\
        &\le \mu(E)+ C^\prime\mu(E)\int_{\mu(E)}^1 \alpha^{-1}\log^2(\e/\alpha) \dd\alpha\\
        &=\mu(E)+\frac{C^\prime\mu(E)}{3}\left[\log^3\left(\frac{\mathrm e}{\mu(E)}\right)-1\right]                                                             \\
        &\le C \mu(E)\left(1+L_1(\mu(E)^{-1})^3\right)
\end{aligned}
\]
for some constants $C, C^\prime>0$, independent of both the underlying system and the measurable set $E$. 
\end{proof}

\begin{lem}\label{lem:polylog-cutoff}
Fix $A>0$ and $q\in\N$.  For $R\ge 2$, put
\[
        B_R=1+L_1(R)^A  \ \text{ and }\  m_R=\left\lceil \exp(B_R)\right\rceil.
\]
Then
\[
        \Lambda_q(m_R)\ge B_R
\]
and there is a constant $C_{A,q}>0$ such that 
\[
        1+L_{q+1}(m_R)
        \le
        C_{A,q}\bigl(1+L_{q+1}(R)\bigr).
\]
\end{lem}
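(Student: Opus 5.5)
The plan is to prove both assertions by direct computation from the definitions of the regularized logarithms; no earlier results beyond Lemma~\ref{lem:log-calc} and Lemma~\ref{lem:Psi-estimates} are needed.

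\emph{First assertion.} Since $q\ge 1$ and every factor satisfies $L_j\ge 1$, we have $\Lambda_q(m_R)\ge L_1(m_R)$. Because $m_R\ge \exp(B_R)$ and $L_1$ is increasing,
\[
L_1(m_R)=\log(\mathrm e+m_R)\ge \log(m_R)\ge B_R .
\]
This gives $\Lambda_q(m_R)\ge B_R$.

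\emph{Second assertion.} First bound $L_1(m_R)$. Using $m_R\le \exp(B_R)+1$ and $B_R\ge 1$,
\[
L_1(m_R)\le \log\bigl(\mathrm e+1+\exp(B_R)\bigr)\le B_R+2 = 3+L_1(R)^A .
\]
Next apply $L_2$, i.e.\ one more logarithm:
\[
L_2(m_R)\le \log\bigl(\mathrm e+3+L_1(R)^A\bigr)\le C_A\bigl(1+\log L_1(R)\bigr)\le C_A\bigl(1+L_2(R)\bigr),
\]
using $L_1(R)\ge 1$ and $\log(\mathrm e+3+x^A)\le C_A(1+\log(\mathrm e+x))$ for $x\ge1$.

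\emph{Propagation to higher levels.} Iterate the bound through the higher levels. Suppose $L_j(m_R)\le C\bigl(1+L_j(R)\bigr)$ for some $j\ge 2$. Then
\[
L_{j+1}(m_R)=\log\bigl(\mathrm e+L_j(m_R)\bigr)\le \log\bigl(\mathrm e+C+C\,L_j(R)\bigr)\le C'\bigl(1+L_{j+1}(R)\bigr),
\]
by the elementary inequality $\log(\mathrm e+a+bx)\le C_{a,b}\bigl(1+\log(\mathrm e+x)\bigr)$, which is essentially Lemma~\ref{lem:Psi-estimates}(c) with $\alpha=1$. Induction from $j=2$ up to $j=q+1\ge 2$ then gives
\[
1+L_{q+1}(m_R)\le C_{A,q}\bigl(1+L_{q+1}(R)\bigr).
\]

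\emph{Main obstacle.} The one real step is the passage from level $1$ to level $2$. At level $1$ the bound $L_1(m_R)\lesssim L_1(R)^A$ is polynomial in $L_1(R)$ rather than linear, so the induction cannot start there. It is precisely the second logarithm that absorbs the power $A$ into a constant, which is why the claim is stated for $L_{q+1}$ with $q\ge1$. Everything else is routine tracking of constants depending on $A$ and $q$.
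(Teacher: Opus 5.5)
Your proof is correct and follows essentially the paper's route: bound $L_1(m_R)\le B_R+O(1)$, let one further logarithm absorb the power $A$, and then propagate comparability through the higher iterates. The paper packages the last step as a single application of Lemma~\ref{lem:Psi-estimates}\ref{lem:Psi-estimates:3} with $s=q$, $\alpha=A$ to $L_{q+1}(m_R)=L_q(L_1(m_R))$, using the shift $v=L_1(R)-\mathrm e$ and a separate case for $L_1(R)<\mathrm e$; you instead inline that lemma's level-by-level induction, which avoids the case split.
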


\begin{proof}
Since $m_R\ge \exp(B_R)$, we have $L_1(m_R)\ge B_R$, and hence $\Lambda_q(m_R)\ge B_R$.
On the other hand, $m_R\le \exp(B_R)+1\le 2\exp(B_R)$, 
and hence
\[
        L_1(m_R)\le B_R+C= 1+L_1(R)^A+C\le C_A\bigl(1+L_1(R)\bigr)^A
\]
for some constant $C_A>0$.

Assume first that $L_1(R)\ge\mathrm e$  and put $v=L_1(R)-\mathrm e$. Then $v\ge 0$ and
\[
        1+L_1(R)= 1+\mathrm e+v\le (\mathrm e+1)(1+v).
\]
Applying Lemma~\ref{lem:Psi-estimates}\ref{lem:Psi-estimates:3} with $s=q$, $\alpha=A$ and $t=v$, we obtain that 
\[
\begin{aligned}
        L_{q+1}(m_R)&=L_q\bigl(L_1(m_R)\bigr)\\
        &\le L_q\bigl(C_A(1+L_1(R))^A\bigr)\\
        &\le L_q\bigl(C_A(\mathrm e+1)^A(1+v)^A\bigr)\\
        &\le C_{A,q}\bigl(1+L_q(\mathrm e+v)\bigr)\\
        &= C_{A,q}\bigl(1+L_q(L_1(R))\bigr)\\
        &= C_{A,q}\bigl(1+L_{q+1}(R)\bigr)
\end{aligned}
\]
for some constant $C_{A,q}>0$.

If $L_1(R)<\mathrm e$, we have
\[
        L_1(m_R) \le C_A(1+\mathrm e)^A,
\]
and consequently
\[
        L_{q+1}(m_R)= L_q\bigl(L_1(m_R)\bigr) \le C_{A,q}
\]
for some constant $C_{A,q}>0$. 
Since $1+L_{q+1}(R)\ge 1$, the same estimate follows after enlarging
$C_{A,q}$. This completes the proof.
\end{proof}

\begin{prop}\label{prop:polylog-RME}
Let $\mathbf a=(a_n)_{n\ge 1}$ be a sequence of nonnegative integers.
Suppose that there are $A>0$ and $C_A>0$ such that for every measure-preserving system  $(X,\B_X,\mu, T)$  and every measurable set $E\in\B_X$,
\[
        \int_X A^*_{\mathbf a}\1_E \dd\mu =\int_X \sup_{N\ge 1}\frac1N\sum_{n=1}^N \1_E(T^{a_n}x) \dd\mu\le C_A\mu(E) \left(1+L_1\left(\frac{1}{\mu(E)}\right)^A\right),
\]
where the right-hand side is interpreted as $0$ when $\mu(E)=0$.
Then  $\mathbf a$ satisfies $\RME_q$ uniformly over all measure-preserving systems, i.e., for every $q\in\N$, there is $C_{A,q}>0$, independent of the system, such that
\[
       \int_X M_{\mathbf a,q}\1_E\dd \mu= \int_X \sup_{N\ge 1} \frac1{N\Lambda_q(N)}\sum_{n=1}^N\1_E(T^{a_n}x) \dd\mu(x) \le C_{A,q}\mu(E) \left(1+L_{q+1}\left(\frac{1}{\mu(E)}\right)\right),
\]
where the right-hand side is interpreted as $0$ when $\mu(E)=0$. 
\end{prop}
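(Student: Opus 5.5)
Fix $q\in\N$ and $E$ with $\delta=\mu(E)\in(0,1)$; the case $\delta=0$ is trivial, as in Proposition~\ref{prop:trojan-strong-prime}. For $\delta$ bounded below (say $\delta\ge 1/2$), $M_{\mathbf a,q}\1_E\le 1$ already gives the bound. The plan is to split the supremum defining $M_{\mathbf a,q}\1_E$ at a cutoff $m=m_R$ from Lemma~\ref{lem:polylog-cutoff}, with $R=1/\delta$. Thus put $B=1+L_1(1/\delta)^A$ and $m=\lceil \exp(B)\rceil$. Then
\[
M_{\mathbf a,q}\1_E\le \sup_{N<m}\frac{A_N^{\mathbf a}\1_E}{\Lambda_q(N)}+\sup_{N\ge m}\frac{A_N^{\mathbf a}\1_E}{\Lambda_q(N)}=:G_1+G_2 .
\]

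For the large-$N$ part, use the monotonicity of $\Lambda_q$ to get $G_2\le A_{\mathbf a}^*\1_E/\Lambda_q(m)\le A_{\mathbf a}^*\1_E/B$, since $\Lambda_q(m)\ge B$ by Lemma~\ref{lem:polylog-cutoff}. The hypothesis then yields
\[
\int G_2\,\dd\mu\le C_A\delta\bigl(1+L_1(1/\delta)^A\bigr)/B= C_A\delta .
\]
This absorbs the polylogarithmic loss completely.

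The small-$N$ part is the main obstacle, because it must produce exactly the $L_{q+1}$ factor. A crude bound $G_1\le \1_{S}$ with $S=\{\exists n<m: T^{a_n}x\in E\}$ gives $\mu(S)\le m\delta$, which is far too large. Instead, mimic Proposition~\ref{prop:weak-RME}: pointwise,
\[
G_1\le \sum_{N<m}\frac{1}{N\Lambda_q(N)}\,\1_E(T^{a_N}x)\quad\text{(up to constants)},
\]
obtained by Abel summation. Write $\sum_{n\le N}\1_E(T^{a_n}x)/(N\Lambda_q(N))\le \sum_{n\le N}\1_E(T^{a_n}x)/(n\Lambda_q(n))$, since $k\Lambda_q(k)$ is increasing, so each term with $n\le N$ has weight $1/(N\Lambda_q(N))\le 1/(n\Lambda_q(n))$. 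Integrating and using measure preservation gives
\[
\int G_1\,\dd\mu\le \delta\sum_{n<m}\frac{1}{n\Lambda_q(n)}\lesssim_q \delta\bigl(1+L_{q+1}(m)\bigr)
\]
by Lemma~\ref{lem:log-calc}\ref{lem:log-calc:2}. Lemma~\ref{lem:polylog-cutoff} then converts this to $C_{A,q}\,\delta\bigl(1+L_{q+1}(1/\delta)\bigr)$.

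Adding the two parts gives $\RME_q$ with a constant depending only on $A$, $C_A$ and $q$, uniformly over systems. The only genuine point to check is that the cutoff simultaneously makes $\Lambda_q(m)$ dominate the polylog loss and keeps $L_{q+1}(m)\lesssim L_{q+1}(1/\delta)$. This is exactly the content of Lemma~\ref{lem:polylog-cutoff}, and it is where $q\ge 1$ is used: for $q=0$ we would have $\Lambda_0=1$ and no absorption.
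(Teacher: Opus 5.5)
Your proposal is correct and follows the paper's proof essentially step for step. It uses the same cutoff $m_R$ from Lemma~\ref{lem:polylog-cutoff}, the bound $\sup_{N\ge m_R}A_N^{\mathbf a}\1_E/\Lambda_q(N)\le A^*_{\mathbf a}\1_E/\Lambda_q(m_R)$ to absorb the polylogarithmic loss, and the termwise comparison via monotonicity of $n\Lambda_q(n)$ for $N<m_R$, integrated by measure preservation and Lemma~\ref{lem:log-calc}. That small-$N$ step is a plain termwise comparison rather than Abel summation, and it holds with constant $1$.
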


\begin{proof}
Fix $q\in\N$, and write
\[
        G_q(x)=\sup_{N\ge 1} \frac1{N\Lambda_q(N)} \sum_{n=1}^N\1_E(T^{a_n}x).
\]
If $\mu(E)=0$, then each set $T^{-a_n}E$ has measure zero, so $G_q=0$ a.e.  If $\mu(E)>1/2$, then $0\le G_q\le1$, and hence the desired estimate follows after enlarging the constant. 

We now assume that $ 0<\mu(E)\le 1/2$. Set $R=\mu(E)^{-1}$, and choose $B_R=1+L_1(R)^A $ and $m_R=\left\lceil \exp(B_R)\right\rceil$ as in Lemma~\ref{lem:polylog-cutoff}. Let
\[
        G_q^{<m_R}=\sup_{1\le N<m_R}\frac1{N\Lambda_q(N)}\sum_{n=1}^N\1_E\circ T^{a_n}
\]
and
\[
        G_q^{\ge m_R}=\sup_{N\ge m_R}\frac1{N\Lambda_q(N)}\sum_{n=1}^N\1_E\circ T^{a_n}.
\]
Then $G_q\le G_q^{<m_R}+G_q^{\ge m_R}$.
For $1\le n\le N$, the monotonicity of the functions $L_j$ implies that $t\mapsto t\Lambda_q(t)$ is increasing.  Hence for any nonnegative numbers $b_n$ and $N<m_R$, we have 
\[
        \frac1{N\Lambda_q(N)}\sum_{n=1}^N b_n \le \sum_{n=1}^N \frac{b_n}{n\Lambda_q(n)} \le \sum_{n<m_R} \frac{b_n}{n\Lambda_q(n)}.
\]
Applying this with $b_n=\1_E\circ T^{a_n}$  and using measure preservation, we obtain that
\[
\begin{aligned}
        \int_XG_q^{<m_R}\dd\mu&\le\sum_{n<m_R}\frac1{n\Lambda_q(n)} \int_X\1_E(T^{a_n}x)\,\dd\mu(x)\\
        &=\mu(E)\sum_{n<m_R}\frac1{n\Lambda_q(n)}\\
        &\le C_q\mu(E)\bigl(1+L_{q+1}(m_R)\bigr)\\
        &\le C_{A,q}\mu(E) \bigl(1+L_{q+1}(\mu(E)^{-1})\bigr) 
\end{aligned}
\]
for some constants $C_q, C_{A,q}$, where the last two inequalities follow from Lemma~\ref{lem:log-calc} and Lemma~\ref{lem:polylog-cutoff} respectively.

For $N\ge m_R$, monotonicity of $\Lambda_q$ yields
\[
        \frac1{N\Lambda_q(N)}\sum_{n=1}^N\1_E(T^{a_n}x) \le \frac1{\Lambda_q(m_R)} \frac1N\sum_{n=1}^N\1_E(T^{a_n}x)\le \frac{A^*_{\mathbf a}\1_E(x)}{\Lambda_q(m_R)}.
\]
Thus
\[
        G_q^{\ge m_R} \le \frac{A^*_{\mathbf a}\1_E}{\Lambda_q(m_R)}.
\]
Using the hypothesis and Lemma~\ref{lem:polylog-cutoff}, we obtain that 
\[
\begin{aligned}
        \int_X G_q^{\ge m_R}\dd\mu  \le\frac{C_A\mu(E)\bigl(1+L_1(R)^A\bigr)} {\Lambda_q(m_R)} =\frac{C_A\mu(E) B_R}{\Lambda_q(m_R)}\le C_A\mu(E).
\end{aligned}
\]
Combining the two parts we get 
\[
        \int_XG_q\,\dd\mu \le C_{A,q}\mu(E) \bigl(1+L_{q+1}(\mu(E)^{-1})\bigr) 
\]
for some constant  $C_{A,q}>0$, which is precisely $\RME_q$.
\end{proof}

\begin{thm}\label{thm:prime-RME}
For every $q\in\N$, the prime sequence satisfies $\RME_q$ uniformly over all measure-preserving systems.
\end{thm}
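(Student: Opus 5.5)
The plan is to combine Proposition~\ref{prop:trojan-strong-prime} with Proposition~\ref{prop:polylog-RME}, taking $A=3$; essentially nothing else is needed. Proposition~\ref{prop:trojan-strong-prime} provides a constant $C>0$, independent of the system and of the set, such that
\[
        \int_X P^*\1_E\dd\mu\le C\mu(E)\left(1+L_1\left(\frac{1}{\mu(E)}\right)^3\right)
\]
for every measure-preserving system $(X,\B_X,\mu,T)$ and every $E\in\B_X$.

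The first step is to match this with the hypothesis of Proposition~\ref{prop:polylog-RME} for $\mathbf a=\mathbf p$. By definition,
\[
        P^*\1_E(x)=\sup_{N\ge 1}\frac1N\sum_{n=1}^N\1_E(T^{p_n}x)=A^*_{\mathbf p}\1_E(x),
\]
since $\1_E$ is nonnegative. The two maximal functions coincide, so the displayed estimate is exactly the hypothesis of Proposition~\ref{prop:polylog-RME} with $A=3$ and $C_A=C$. The convention that the right-hand side is $0$ when $\mu(E)=0$ is also consistent, because $P^*\1_E=0$ a.e.\ in that case, as the proof of Proposition~\ref{prop:trojan-strong-prime} shows. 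The primes form a sequence of nonnegative integers, so the standing assumptions hold.

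The second step is to apply Proposition~\ref{prop:polylog-RME}. For each $q\in\N$ it gives a constant $C_{3,q}$, independent of the system, with
\[
        \int_X M_{\mathbf p,q}\1_E\dd\mu\le C_{3,q}\mu(E)\bigl(1+L_{q+1}(\mu(E)^{-1})\bigr).
\]
This is precisely uniform $\RME_q$ in the sense of Definition~\ref{def:RME}.

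I see no real obstacle here. The only point needing care is the uniformity of Trojan's constant over all systems, which is already recorded in Proposition~\ref{prop:trojan-prime-estimate}. One should also note that the restriction $q\ge 1$ is essential: the proof of Proposition~\ref{prop:polylog-RME} uses $\Lambda_q(m_R)\ge B_R$ to absorb the polylogarithmic loss, and this fails for $q=0$, where $\Lambda_0\equiv 1$.
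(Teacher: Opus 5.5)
Your proposal is correct and is exactly the paper's argument: identify $P^*\1_E$ with $A^*_{\mathbf p}\1_E$, feed Proposition~\ref{prop:trojan-strong-prime} into Proposition~\ref{prop:polylog-RME} with $A=3$, and read off uniform $\RME_q$ for every $q\in\N$. Your remark that $q\ge 1$ is needed (through $\Lambda_q(m_R)\ge B_R$) is accurate, but it only shows that this method breaks down at $q=0$, not that $\RME_0$ fails for the primes.
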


\begin{proof}
It follows directly from Proposition \ref{prop:polylog-RME} with $\mathbf a=\mathbf p$.  Proposition~\ref{prop:trojan-strong-prime}
provides its hypothesis with $A=3$.
\end{proof}

\begin{prop}\label{prop:prime-not-weak}
The prime maximal operator $P^*$ is not uniformly of weak type $(1,1)$ over measure-preserving systems.
\end{prop}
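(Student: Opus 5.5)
The plan is to argue by contradiction, combining the Banach principle with the known $L^1$ failure for the primes. Suppose $P^*$ were uniformly of weak type $(1,1)$ with some constant $C_w$. I would show that this forces almost everywhere convergence of the prime averages
\[
        \frac1N\sum_{n=1}^N f(T^{p_n}x)
\]
for every $f\in L^1(\mu)$, in every measure-preserving system. That conclusion contradicts LaVictoire's theorem \cite{LaVictoire} that the primes are universally $L^1$-bad. Here universally $L^1$-bad means that in every (aperiodic, e.g.\ ergodic nonatomic) system, such as an irrational rotation or any invertible ergodic transformation of $[0,1]$, there is $f\in L^1$ whose prime averages diverge on a set of positive measure.

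The key steps are as follows.

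First, fix a system, for instance $[0,1]$ with an invertible ergodic Lebesgue measure-preserving $T$. Recall Wierdl's theorem \cite{Wierdl88}: for every $g\in L^2(\mu)\subset L^p$ with $p>1$, the prime averages converge a.e. Since $L^2$ is dense in $L^1$, it remains to show that the set of $f\in L^1$ with a.e.\ convergence is closed in $L^1$.

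Second, prove this closedness by the standard Banach principle argument. Define the oscillation
\[
        \Omega f(x)=\limsup_{N,M\to\infty}\bigl|A_N^{\mathbf p}f(x)-A_M^{\mathbf p}f(x)\bigr|\le 2P^*f(x).
\]
For $f\in L^1$ and $g\in L^2$ we have $\Omega f\le \Omega(f-g)+\Omega g=\Omega(f-g)$ a.e. Hence, by the assumed weak type bound,
\[
        \mu(\{\Omega f>\alpha\})\le\mu(\{2P^*(f-g)>\alpha\})\le \frac{2C_w}{\alpha}\|f-g\|_1 .
\]
Letting $\|f-g\|_1\to0$ gives $\Omega f=0$ a.e. Moreover the limit is finite a.e., since $P^*f<\infty$ a.e.\ by the weak type bound. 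So the averages converge a.e.

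Third, note that weak type on this one system suffices for the contradiction; uniformity is more than needed. Its failure on any single system, in particular on the Lebesgue interval, yields the proposition.

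The main obstacle is matching the precise form of LaVictoire's result. One must check that the $L^1$-badness there yields a bad $f$ in a system where we may apply the above, not merely an abstract statement about the maximal inequality on $\Z$. If his statement is phrased as the failure of the weak $(1,1)$ inequality for the discrete prime maximal operator on $\Z$, I would instead use Calderón's transference principle. Uniform weak $(1,1)$ over all systems, applied to rotations on large cyclic groups $\Z/M\Z$ (or Rokhlin towers in an aperiodic system), implies the weak $(1,1)$ inequality on $\Z$ with the same constant. That gives the contradiction directly, without the Banach principle. Either route yields the claim. It also shows that the uniform $\RME_q$ of Theorem~\ref{thm:prime-RME} is strictly weaker than uniform weak type $(1,1)$.
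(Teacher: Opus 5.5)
Your proof is correct, but it follows a different route from the paper's.

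The paper uses LaVictoire's Theorem~1.1 in its quantitative form. For every $C>0$ there are a measure-preserving system and $g\in L^1$ with $\|\mathcal M_{\mathbf p}g\|_{L^{1,\infty}}>C\|g\|_{L^1}$, where $\mathcal M_{\mathbf p}g=\sup_N\bigl|\frac1N\sum_{n\le N}g\circ T^{p_n}\bigr|$. Since $\mathcal M_{\mathbf p}g\le P^*g$ pointwise, a uniform weak type bound for $P^*$ would pass to $\mathcal M_{\mathbf p}$ with the same constant. That is the whole proof: no Banach principle, no density argument, no transference.

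Your main route starts instead from the qualitative, divergence form of LaVictoire's result. You close the gap with the Banach principle, using Wierdl's $L^p$ theorem on the dense class $L^2$. The steps are sound: $\Omega g=0$ a.e.\ for $g\in L^2$, and $\Omega(f-g)\le 2P^*(f-g)$. An a.e.\ Cauchy sequence of finite numbers already has a finite limit, so your remark about $P^*f<\infty$ is not needed.

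Your route buys a stronger conclusion. $P^*$ fails to be of weak type $(1,1)$ on any single system in which the prime averages of some $L^1$ function diverge. Under the universally-bad formulation, that is every aperiodic system, not merely a failure of uniformity over systems. The price is the extra input of Wierdl's theorem and reliance on the divergence formulation rather than the maximal-inequality formulation.

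Your fallback via Calder\'on transference from $\Z$ is closer in spirit to the paper. It is unnecessary here, because LaVictoire's Theorem~1.1, as the paper uses it, is already stated over measure-preserving systems. This also settles the ``main obstacle'' you flagged.
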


\begin{proof}
Define
\[
    \mathcal M_{\mathbf p}g =\sup_{N\ge 1}\left|\frac1N\sum_{n=1}^N g\circ T^{p_n}\right|.
\]
By the triangle inequality, we have 
\[
    \mathcal M_{\mathbf p}g \le \sup_{N\ge 1} \frac1N\sum_{n=1}^N |g|\circ T^{p_n} = P^*g.
\]
Therefore, a uniform weak type $(1,1)$ estimate for $P^*$ would imply the same estimate, with the same constant,  for $\mathcal M_{\mathbf p}$. 
This contradicts \cite[Theorem~1.1]{LaVictoire}, which declares that for every $C>0$ there exist a measure-preserving system and a $g\in L^1$ such that
\[
    \|\mathcal M_{\mathbf p}g\|_{L^{1,\infty}}> C\|g\|_{L^1}.
\]
\end{proof}

\begin{coro}\label{cor:prime-RME-strict}
For every fixed $q\in\N$, the uniform $\RME_q$  is strictly weaker than the uniform weak type $(1,1)$.
\end{coro}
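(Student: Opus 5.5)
The statement to prove is Corollary~\ref{cor:prime-RME-strict}. It asserts two things. First, uniform weak type $(1,1)$ implies uniform $\RME_q$. Second, the converse fails. I would argue both directions by combining results already proved, using the prime sequence $\mathbf p$ as the separating example.

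\emph{Uniform weak type implies uniform $\RME_q$.} Suppose $A_{\mathbf a}^*$ is uniformly of weak type $(1,1)$ over measure-preserving systems. Then Proposition~\ref{prop:weak-RME} applies directly. It gives $\RME_q$ on every system, and since the constant $C_{q,w}$ there depends only on $q$ and the weak type constant $C_w$, the estimate is uniform over systems. So uniform weak type $(1,1)$ implies uniform $\RME_q$ for every $q\in\N_0$, in particular for every fixed $q\in\N$.

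\emph{The implication is strict.} Fix $q\in\N$ and take $\mathbf a=\mathbf p$. By Theorem~\ref{thm:prime-RME}, $\mathbf p$ satisfies $\RME_q$ uniformly over all measure-preserving systems. That theorem rests on Trojan's restricted estimate, Proposition~\ref{prop:trojan-strong-prime}, which gives $A=3$, fed into Proposition~\ref{prop:polylog-RME}. On the other hand, the maximal operator $A_{\mathbf p}^*=P^*$ is not uniformly of weak type $(1,1)$ by Proposition~\ref{prop:prime-not-weak}. That proposition uses LaVictoire's theorem together with the pointwise domination $\mathcal M_{\mathbf p}g\le P^*g$. Hence the prime sequence satisfies uniform $\RME_q$ but not uniform weak type $(1,1)$, and the converse implication fails.

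The main point needing care is that both properties are compared for the same maximal object and in the same uniformity class. The weak type property concerns $A_{\mathbf p}^*=\sup_N\frac1N\sum_{n\le N}|f|\circ T^{p_n}$, which coincides with $P^*$. The uniform $\RME_q$ property concerns the normalized operator $M_{\mathbf p,q}$ over all systems. One should also note that $q\ge 1$ is essential. The case $q=0$ is excluded because $\RME_0$ for indicators is not available for the primes: Trojan's bound carries an extra $\log^3$, which the cutoff argument absorbs only through the decay of $\Lambda_q$ with $q\ge1$. With these identifications the corollary follows immediately.
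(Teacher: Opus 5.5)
Your proposal is correct and follows the paper's own proof: Proposition~\ref{prop:weak-RME} gives the implication from uniform weak type $(1,1)$, and the primes separate the two properties via Theorem~\ref{thm:prime-RME} and Proposition~\ref{prop:prime-not-weak}. Read your closing remark about $q=0$ only as saying that the cutoff argument does not yield $\RME_0$ for the primes. The paper does not establish that $\RME_0$ fails (cf.\ Question~\ref{ques:prime-q0}), and the remark plays no role in the proof.
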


\begin{proof}
By Proposition~\ref{prop:weak-RME}, the uniform weak type $(1,1)$ implies the uniform $\RME_q$. 
On the other hand, Theorem~\ref{thm:prime-RME} shows that the prime sequence satisfies the uniform $\RME_q$. However, Proposition~\ref{prop:prime-not-weak} shows that its maximal operator does not satisfy any uniform weak type $(1,1)$ estimate.
\end{proof}

Now we are ready to prove \ref{thm:regular-endpoints}\ref{thm:regular-endpoints:2}.

\begin{proof}[Proof of \ref{thm:regular-endpoints}\ref{thm:regular-endpoints:2}]
Combining Theorem~\ref{thm:prime-RME} and Theorem~\ref{thm:RME-convergence-zero}, we obtain  almost everywhere
convergence to $0$, and local stability under the LDP for every $f\in L\log_{q+1}L$ and every $q\in\N$.  

For the lower endpoint bound, by \cite[Theorem~3]{RosserSchoenfeld1962} we have
\[
        p_n< n\left(\log n+\log\log n-\frac12\right), \quad n\ge 20.
\]
Note that
\[
        \log n+\log\log n-\frac12<n, \quad n\ge 20.
\]
Consequently
\[
        p_n<n^2,  \quad n\ge 20.
\]
Now set $ A=\max\{1,\,\max_{1\le n<20}\frac{p_n}{n^2} \}$. Then
\[
        p_n\le A n^2, \quad n\ge 1.
\]
That is, the sequence of primes has polynomial growth.  By Theorem~\ref{thm:poly-lower} or Corollary \ref{cor:abstract-sharp}, we immediately conclude that $ L\log_{q+1}L$ is the sharp Orlicz endpoint in the local observation sense.
\end{proof}

To end this subsection, we give a positive endpoint result for the ordinary prime normalization.  
Trojan proved in \cite[Theorem 7.4]{Trojan} that 
\[
    P^*: L(\log L)^2(\log\log L)\to  L^{1,\infty}
\]
is bounded and consequently obtained the almost-everywhere convergence of $\mathcal P_R f$ as $R\to \infty$, or equivalently of $A_N^{\mathbf p}f$ as $N\to \infty$,  for every $f\in L(\log L)^2(\log\log L)$. However, this estimate does not by itself provide the $L^1$-integrable majorant required by Theorem~\ref{thm:local-stability-principle}.
In the following proposition, we prove a strong $L^1$ estimate for the prime maximal operator on the smaller space $L(\log L)^3$, which in turn yields locally stable convergence on that space. 

Define
\[
        \Theta_3(t)= \int_0^t\left(1+L_1(u)^3\right) \dd u.
\]
It is clear that $\Theta_3(t)\asymp t\bigl(1+L_1(t)^3\bigr)$. Hence the Orlicz space $L^{\Theta_3}$ coincides with the usual $L(\log L)^3$ space , with equivalent Luxemburg norms.

\begin{prop}\label{prop:prime-q0-positive}
Let $\mathbf p=(p_n)_{n\ge 1}$ be the sequence of primes. 
There exists a constant $C>0$ such that for every measure-preserving system $(X,\B_X,\mu,T)$ and every $f\in L^{\Theta_3}$, we have
\[
        \|P^*f\|_{L^1} \le C\|f\|_{\Theta_3}.
\]
More precisely,
\[
\begin{aligned}
        \int_X P^*f \dd\mu
        &\le C\int_0^\infty d_f(t) \left( 1+L_1\left(\frac1{d_f(t)}\right)^3\right) \dd t \\
        &\le C\left(1+ \int_X |f|\bigl(1+L_1(\mathrm e+|f|)^3\bigr) \dd\mu \right),
\end{aligned}
\]
where $d_f(t)=\mu(\{|f|>t\})$, and the integrand is interpreted as zero when $d_f(t)=0$.

Consequently, if the underlying metric probability space has the LDP, then the ordinary prime averages
\[
        A_N^{\mathbf p}f =\frac1N\sum_{n=1}^N f\circ T^{p_n} 
\]
converge locally stably for every $f\in L^{\Theta_3}$.
\end{prop}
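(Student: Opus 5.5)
The plan is to repeat the layer-cake lifting of Proposition~\ref{prop:RME-lift}, now with the cubic-logarithm restricted estimate of Proposition~\ref{prop:trojan-strong-prime} in place of $\RME_q$.

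First, for a.e.\ $x$ we write $|f(x)|=\int_0^\infty\1_{\{|f|>t\}}(x)\dd t$. Positivity and linearity of the prime averages then give
\[
P^*f\le\int_0^\infty P^*\1_{\{|f|>t\}}\dd t .
\]
By Tonelli and Proposition~\ref{prop:trojan-strong-prime} applied to $E=\{|f|>t\}$, this yields
\[
\int_X P^*f\dd\mu\le C\int_0^\infty d_f(t)\Bigl(1+L_1\bigl(1/d_f(t)\bigr)^3\Bigr)\dd t,
\]
with the integrand vanishing where $d_f(t)=0$. This is the first inequality.

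For the second inequality we need a cubic analogue of Lemma~\ref{lem:dist-bound}, proved the same way.
\begin{enumerate}
\item The part without the logarithm is $\|f\|_1$.
\item For the logarithmic part, the rearrangement identity \eqref{eq:rearrangement-section} and monotonicity of $u\mapsto L_1(1/u)^3$ give the bound $\int_0^1 f^*(u)L_1(1/u)^3\dd u$.
\item Split $(0,1)$ into $E_1=\{f^*(u)\ge u^{-1/2}\}$ and its complement $E_2$.
\item On $E_1$ we have $1/u\le (f^*)^2$, so $L_1(1/u)\le L_1((f^*)^2)\le 2L_1(\e+f^*)$, and hence $L_1(1/u)^3\le 8L_1(\e+f^*)^3$.
\item On $E_2$ the integrand is at most $u^{-1/2}L_1(1/u)^3$, which is integrable on $(0,1)$.
\item Equimeasurability \eqref{eq:equimeasurability} then gives the bound $C\bigl(1+\int_X|f|(1+L_1(\e+|f|)^3)\dd\mu\bigr)$.
\end{enumerate}

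For the Luxemburg estimate, take $\lambda>\|f\|_{\Theta_3}$. Since $\Theta_3(t)\asymp t(1+L_1(t)^3)$ and $L_1(\e+t)\lesssim 1+L_1(t)$, we get $\int_X \frac{|f|}{\lambda}\bigl(1+L_1(\e+|f|/\lambda)^3\bigr)\dd\mu\le C$. Applying the distributional bound to $f/\lambda$ and using homogeneity of $P^*$ gives $\|P^*f\|_1\le C\lambda$. Letting $\lambda\downarrow\|f\|_{\Theta_3}$ finishes this part.

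For the consequence, we first need a.e.\ convergence of $A_N^{\mathbf p}f$ for $f\in L^{\Theta_3}$. Trojan's theorem quoted above gives this on $L(\log L)^2(\log\log L)\supset L(\log L)^3$, since $t\log^2t\log\log t\lesssim t\log^3 t$ for large $t$. The limit $F$ is in $L^1$ because $|F|\le P^*f\in L^1$. Since $|A_N^{\mathbf p}f|\le P^*f$ for every $N$, Theorem~\ref{thm:local-stability-principle} with $H=P^*f$ gives locally stable convergence.

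The main obstacle is the cube in the second step. We need $L_1((f^*)^2)^3\lesssim 1+L_1(\e+f^*)^3$, which holds because $\log(\e+s^2)\le 2\log(\e+s)$. We also need $u^{-1/2}\log^3(\e/u)$ to be integrable near $0$, which it is. Beyond these two checks, everything follows the template of Proposition~\ref{prop:RME-lift}.
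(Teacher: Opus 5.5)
Your proposal is correct and follows the paper's proof essentially step for step. The steps match in order: the layer-cake lifting of Proposition~\ref{prop:trojan-strong-prime} as in Proposition~\ref{prop:RME-lift}; the cubic version of Lemma~\ref{lem:dist-bound} with the split at $f^*(u)=u^{-1/2}$, where your check $\log(\mathrm e+s^2)\le 2\log(\mathrm e+s)$ is exactly the inequality the paper invokes; homogeneity for the Luxemburg bound; and Trojan's a.e.\ theorem on $L(\log L)^2(\log\log L)\supset L(\log L)^3$ combined with Theorem~\ref{thm:local-stability-principle} using $H=P^*f$.
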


\begin{proof}
Using the same argument as in Proposition~\ref{prop:RME-lift}, the layer-cake formula, positivity, Tonelli's theorem, and Proposition~\ref{prop:trojan-strong-prime} yield 
\[
        \int_X P^*f \dd\mu \le C\int_0^\infty d_f(t) \left(1+L_1\left(\frac1{d_f(t)}\right)^3\right)\dd t 
\]
for some constant $C>0$.
The same rearrangement argument as in Lemma~\ref{lem:dist-bound}, with
$L_1^3$ in place of $L_s$, yields
\[
        \int_0^\infty  d_f(t) \left( 1+L_1\left(\frac1{d_f(t)}\right)^3 \right) \dd t 
        \le C\left(1+\int_X |f|\bigl(1+L_1(\mathrm e+|f|)^3\bigr)\,\dd\mu \right).
\]
Indeed, the proof there is unchanged, except that on the first part $E_1$ of the rearrangement decomposition we use
\[
        L_1(t^2)^3\le C\bigl(1+L_1(\mathrm e+t)^3\bigr).
\]

Let $\lambda>\|f\|_{\Theta_3}$.  Since
\[
        \int_X\Theta_3\left(\frac{|f|}{\lambda}\right) \dd\mu  \le 1
\]
and $\Theta_3(t)\asymp t(1+L_1(t)^3)$, applying the preceding estimate to $f/\lambda$, it follows that 
\[
        \|P^*(f/\lambda)\|_{L^1}\le C.
\]
Note that $ P^*f=\lambda P^*\left(\frac{f}{\lambda}\right)$, and hence  $ \|P^*f\|_{L^1} \le C\lambda$.
Letting $\lambda\downarrow\|f\|_{\Theta_3}$ we have
\[
        \|P^*f\|_{L^1}\le C\|f\|_{\Theta_3}.
\]

It remains to prove the locally stable convergence.  Since
\[
        L^{\Theta_3}=L(\log L)^3\subset L(\log L)^2(\log\log L),
\]
by \cite[Theorem 7.4]{Trojan}, the prime averages $A^{\mathbf p}_Nf$ converge a.e. for every $f\in L(\log L)^3$.
Moreover, if $f^*$ denotes the resulting pointwise limit, then 
\[
        |f^*| \le P^*f  \ \text{ and } \sup_{N\ge 1} \left|A_N^{\mathbf p} f\right|\le P^*f\in L^1(\mu).
\]
Thus, by Theorem~\ref{thm:local-stability-principle}, $A_N^{\mathbf p}f$ converges locally stably to $f^*$.
\end{proof}

From the proof of \ref{thm:regular-endpoints}\ref{thm:regular-endpoints:2}, the prime sequence has polynomial growth. By Theorem~\ref{thm:poly-lower} with $q=0$, we get an $L^\Phi$ local $\infty$-sweeping out counterexample on the Lebesgue interval whenever $\Phi(t)=o(tL_1(t)) \  (t\to \infty)$.
In contrast, Proposition~\ref{prop:prime-q0-positive} provides locally stable convergence only on the sufficient class $L(\log L)^3$.
Therefore we pose the following natural endpoint question.

\begin{ques}\label{ques:prime-q0}
Do the ordinary prime averages converge locally stably for every $f\in L\log L$ on every measure-preserving system whose underlying metric probability space has the LDP?
\end{ques}

\section{\texorpdfstring{The sharp $L\log_{q}L$ Orlicz endpoint}{The sharp iterated-logarithmic endpoint at level q}} \label{sec:Llogq}

This section establishes the sharp $L\log_{q}L$ Orlicz endpoint for local observation problems.
The positive theorem in this section is valid for every time sequence and follows from a dyadic decomposition.  
The lower examples are built in two steps: first we produce finite orbit-name witnesses, and then place those finite stages on successively finer interval levels.  The three finite-stage assumptions needed for the second step are stated below and used only in the assembly argument.

\subsection{The universal positive endpoint}

In this subsection we prove \ref{thm:universal-sparse}\ref{thm:universal-sparse:1}. The result holds uniformly over all
time sequences. The main ingredient is a strong $L^1$ estimate for the positive maximal operator associated with the normalized averages. 
We first present two elementary estimates for the dyadic weights and then prove a general truncation lemma.

For $q\in\N$ and integers $t\ge 2$, put
\[
        \omega_q(t)=\frac{1}{t\Lambda_{q-1}(t)}.
\]

\begin{lem}\label{lem:iterated-sum}
For every $q\in\N$, there are constants $c_q,C_q>0$ such that for every integer $T\ge 3$,
\[
        c_q\bigl(1+L_q(T)\bigr) \le \sum_{2\le t\le T}\omega_q(t) \le C_q\bigl(1+L_q(T)\bigr), 
\]
and for every integer $t\ge 2$,
\[
        c_q\omega_q(t) \le\frac{1}{\Lambda_q(2^t)} \le C_q\omega_q(t).
\]
\end{lem}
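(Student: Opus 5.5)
The plan is to prove the second (pointwise) comparison first and then derive the sum estimate from Lemma~\ref{lem:log-calc}\ref{lem:log-calc:2}.

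\emph{Pointwise comparison.} The basic observation is that $L_1(2^t)=\log(\mathrm e+2^t)\asymp t$ for $t\ge 2$, with absolute constants. By induction on $j\ge 1$, and using the fact that each $L_j$ is increasing and of at most logarithmic growth, I expect
\[
L_{j+1}(2^t)=L_j\bigl(L_1(2^t)\bigr)\asymp_j L_j(t),\qquad t\ge 2 .
\]
Both the identity $L_{j+1}=L_j\circ L_1$ and the comparison come from unwinding the definitions. For the comparison: $L_1(c\,t)\asymp L_1(t)$ whenever $t\ge 2$ and $c$ ranges over a fixed compact subinterval of $(0,\infty)$. Upward this is Lemma~\ref{lem:Psi-estimates}\ref{lem:Psi-estimates:3}; downward it follows from monotonicity together with $\log(\mathrm e+ct)\ge c'\log(\mathrm e+t)$. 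Each further application of $\log(\mathrm e+\cdot)$ then preserves two-sided comparability, since $\log(\mathrm e+Cx)\le C'\log(\mathrm e+x)$ for $x\ge 0$.

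Taking the product over $j$ gives
\[
\Lambda_q(2^t)=L_1(2^t)\prod_{j=1}^{q-1}L_{j+1}(2^t)\asymp_q t\,\Lambda_{q-1}(t)=\omega_q(t)^{-1},
\]
which is the second claim. The case $q=1$ is just $L_1(2^t)\asymp t$.

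\emph{Sum estimate.} The sum $\sum_{2\le t\le T}\omega_q(t)=\sum_{2\le t\le T}\frac{1}{t\Lambda_{q-1}(t)}$ is, up to the single term $t=1$, exactly the sum appearing in Lemma~\ref{lem:log-calc}\ref{lem:log-calc:2} at level $q-1$. That lemma gives
\[
1+\sum_{n=1}^{T}\frac{1}{n\Lambda_{q-1}(n)}\asymp_q 1+L_q(T).
\]
The upper bound follows by simply dropping the $t=1$ term. For the lower bound, I use that $T\ge 3$: then the sum contains at least the term $t=2$, which is a fixed positive quantity. Hence the sum is $\ge c$ and also $\ge \sum_{1\le n\le T}-1$, so it is $\gtrsim_q 1+L_q(T)$ once the additive constants are absorbed. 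Alternatively, one can compare directly with $\int_2^{T+1}\frac{\mathrm du}{u\Lambda_{q-1}(u)}$ and use \eqref{eq:iterated-log-derivative}.

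\emph{Main obstacle.} The only delicate step is keeping the iterated-log comparison two-sided uniformly down to $t=2$. The regularization by $\mathrm e$ makes every $L_j\ge 1$, so all quantities stay bounded away from $0$ and small $t$ cause no trouble. Everything else is bookkeeping of constants depending only on $q$.
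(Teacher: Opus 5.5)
Your proposal is correct and follows the paper's own proof. The pointwise comparison comes from $L_1(2^t)\asymp t$ and $L_{r+1}(2^t)=L_r(L_1(2^t))\asymp_q L_r(t)$, and the sum estimate is Lemma~\ref{lem:log-calc}\ref{lem:log-calc:2} at level $q-1$, with the initial term absorbed into the constants. You simply give more detail than the paper does: the two-sided stability of the iterated logarithms under bounded dilations, and the max-argument for the lower bound.
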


\begin{proof}
The first estimate follows from Lemma~\ref{lem:log-calc}\ref{lem:log-calc:2}, applied with $q-1$, after changing the constants to account for the finitely many initial
terms.

For fixed $q$,  we have 
\[
        L_1(2^t)\asymp t  \ \text{ and }\  L_{r+1}(2^t)\asymp L_r(t), \  1\le r\le q-1.
\]
Hence
\[
        \Lambda_q(2^t) \asymp_q t\Lambda_{q-1}(t),
\]
which proves the second estimate.
\end{proof}

The following lemma is the main maximal estimate used in this subsection.

\begin{lem} \label{lem:positive-weighted-dyadic}
Let $w_t>0$, $t\ge 2$, satisfy $w_t^{-1}\ge t$,  and put
\[
        W_w(u)=\sum_{2\le t<u}w_t, \quad u\ge 0.
\]
Let $(X,\B_X,\mu, T)$ be measure-preserving system, $(a_n)$ be any sequence of nonnegative integers and define
\[
        D_tg =\frac{1}{2^t}\sum_{n=1}^{2^t}g\circ T^{a_n}.
\]
Then for every measurable $f$, we have 
\[
        \left\|\sup_{t\ge 2} w_tD_t|f| \right\|_1 \le  1+\int_X |f| W_w(|f|)\dd\mu.
\]
\end{lem}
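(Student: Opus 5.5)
The plan is a pointwise splitting of $|f|$ at height $t$, separately for each dyadic index $t$. The bounded part is absorbed by the hypothesis $w_t\le 1/t$, and the unbounded part is summed over $t$, using that each $D_t$ preserves integrals.

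Put $g=|f|$. For each $t\ge 2$ write
\[
        g=g\mathbf 1_{\{g\le t\}}+g\mathbf 1_{\{g>t\}}.
\]
The averages $D_t$ are positive and linear, so
\[
        w_tD_tg\le w_tD_t\bigl(g\mathbf 1_{\{g\le t\}}\bigr)+w_tD_t\bigl(g\mathbf 1_{\{g>t\}}\bigr).
\]

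The first term is handled pointwise. Each $D_t$ is an average of compositions with $T^{a_n}$, hence $D_t(g\mathbf 1_{\{g\le t\}})\le t$ everywhere. The hypothesis $w_t^{-1}\ge t$ then gives
\[
        w_tD_t\bigl(g\mathbf 1_{\{g\le t\}}\bigr)\le w_t\,t\le 1
\]
for every $t$. This accounts for the constant $1$ in the statement.

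For the second term, I replace the supremum over $t$ by a sum, using nonnegativity:
\[
        \sup_{t\ge 2}w_tD_tg\le 1+\sum_{t\ge 2}w_tD_t\bigl(g\mathbf 1_{\{g>t\}}\bigr).
\]
Integrating and applying Tonelli, it remains to compute $\int_X D_th\dd\mu$ for nonnegative $h$. Since $T$ is measure-preserving, $\int_X h\circ T^{a_n}\dd\mu=\int_X h\dd\mu$ for each $n$. Averaging over $1\le n\le 2^t$ gives $\int_X D_th\dd\mu=\int_X h\dd\mu$, and this holds for an arbitrary sequence $(a_n)$. Therefore
\[
        \int_X\sup_{t\ge2}w_tD_tg\dd\mu\le 1+\sum_{t\ge2}w_t\int_X g\mathbf 1_{\{g>t\}}\dd\mu=1+\int_X g\sum_{2\le t<g}w_t\dd\mu.
\]
The inner sum is exactly $W_w(g)$, and the strict inequality $t<g$ matches the definition $W_w(u)=\sum_{2\le t<u}w_t$. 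This is the claimed bound.

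No step is a serious obstacle. The only point needing care is to use the threshold $g>t$ itself, rather than a dyadic threshold such as $g>2^t$. That choice is what makes the truncated part bounded by $w_t\,t\le 1$ and produces precisely the weight $W_w(|f|)$. Measurability of the supremum is clear because the index set $t\ge 2$ is countable.
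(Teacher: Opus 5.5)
Your proof is correct and follows essentially the same route as the paper. You split $|f|$ into a bounded part, controlled pointwise by $1$, and a tail part whose supremum is dominated by the sum over $t$, then finish with Tonelli and the invariance $\int_X D_th\,\dd\mu=\int_X h\,\dd\mu$. The only difference is cosmetic: the paper truncates at height $w_t^{-1}$ and uses $w_t^{-1}\ge t$ when bounding the tail sum by $W_w(|f|)$, whereas you truncate at height $t$ and use the same hypothesis on the bounded part; both give exactly the stated estimate.
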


\begin{proof}
For each $t\ge 2$, decompose 
\[
        |f|=d_t+r_t,
\]
where
\[
        d_t=|f|\1_{\{|f|\le w_t^{-1}\}}  \text{ and } r_t= |f|\1_{\{|f|>w_t^{-1}\}}.
\]
Since $0\le d_t\le w_t^{-1}$, positivity yields
\[
        w_t D_t d_t\le 1.
\]
Since all terms are nonnegative, by Tonelli's theorem, and invariance of $\mu$, we have 
\[
 \left\|\sup_{t\ge 2}w_tD_tr_t\right\|_1\le \sum_{t\ge 2}w_t \int_X D_t r_t \dd\mu  = \int_X |f(x)|\sum_{\{t\ge 2: w_t^{-1}<|f(x)|\} } w_t \dd\mu(x).
\]
Since $w_t^{-1}\ge t$, the condition $w_t^{-1}<|f(x)|$ implies $t<|f(x)|$.  Hence
\[
        \sum_{\{t\ge 2: w_t^{-1}<|f(x)|\}}w_t \le  W_w(|f(x)|).
\]
Combining the estimates for $d_t$ and $r_t$ proves the assertion.
\end{proof}

For $q\in\N$ and a measurable function $f$, define
\[
        \mathcal M_{\mathbf a,q}f= \sup_{N\ge 2} \frac{1}{N\Lambda_q(N)} \sum_{n=1}^{N}|f|\circ T^{a_n}.
\]

\begin{thm}\label{thm:universal-positive}
Let $q\in\N$. There is a constant $C_q>0$,  depending only on $q$,  such that for every sequence of nonnegative
integers $(a_n)$, every measure-preserving system $(X,\B_X,\mu,T)$ and every $f\in L\log_{q}L$, we have  
\[
        \left\|\sup_{t\ge 2} \omega_q(t) \frac{1}{2^t}\sum_{n=1}^{2^t}|f|(T^{a_n}\cdot)\right\|_1 \le C_q\left( 1+\int_X|f|\bigl(1+L_q(|f|)\bigr)\dd\mu \right).
\]
Moreover,
\begin{equation*}\label{eq:Luxemburg-estimate}
        \|\mathcal M_{\mathbf a,q}f\|_1 \le C_q\|f\|_{\Psi_q}.
\end{equation*}
\end{thm}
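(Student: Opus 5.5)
The plan is to apply Lemma~\ref{lem:positive-weighted-dyadic} with $w_t=\omega_q(t)=1/(t\Lambda_{q-1}(t))$. Since $\Lambda_{q-1}\ge 1$, we have $w_t^{-1}\ge t$, so the hypothesis of the lemma holds. The lemma then gives
\[
\left\|\sup_{t\ge2}\omega_q(t)D_t|f|\right\|_1\le 1+\int_X|f|\,W_{\omega_q}(|f|)\dd\mu .
\]
It remains to bound $W_{\omega_q}(u)=\sum_{2\le t<u}\omega_q(t)$. For $u\le 3$ the sum has at most one term, which is at most $1/2$. For $u>3$, the first estimate of Lemma~\ref{lem:iterated-sum}, applied with $T=\lceil u\rceil-1\ge 3$ (so $T<u$) and combined with the monotonicity of $L_q$, gives $W_{\omega_q}(u)\le C_q(1+L_q(u))$. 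This proves the first displayed inequality.

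For the Luxemburg estimate, I would first dominate $\mathcal M_{\mathbf a,q}f$ pointwise by the dyadic maximal function. Fix $N\ge 2$ and choose $t\ge 1$ with $2^{t-1}\le N<2^t$. Put $t'=\max\{t,2\}$. Positivity gives $\sum_{n\le N}|f|\circ T^{a_n}\le 2^{t'}D_{t'}|f|$, while $N\Lambda_q(N)\ge 2^{t-1}\Lambda_q(2^{t-1})$. Using $\Lambda_q(2^{t-1})\asymp_q\Lambda_q(2^{t'})$, which holds because $L_j(2^{t-1})\asymp L_j(2^t)$ for each $j$, together with the second estimate of Lemma~\ref{lem:iterated-sum}, we get
\[
\frac{1}{N\Lambda_q(N)}\sum_{n=1}^N|f|\circ T^{a_n}\le C_q\,\frac{D_{t'}|f|}{\Lambda_q(2^{t'})}\le C_q\,\omega_q(t')D_{t'}|f|.
\]
The small cases $t=1$, i.e.\ $N\in\{2,3\}$, are absorbed into $t'=2$ with a bounded constant. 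Hence $\mathcal M_{\mathbf a,q}f\le C_q\sup_{t\ge2}\omega_q(t)D_t|f|$.

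Next comes the normalization. Let $\lambda>\|f\|_{\Psi_q}$. Then $\int_X\Psi_q(|f|/\lambda)\dd\mu\le 1$, and \eqref{eq:Psi-comparison} gives $\int_X (|f|/\lambda)(1+L_q(|f|/\lambda))\dd\mu\le c_q^{-1}$. The first part of the proof, applied to $f/\lambda$, yields $\|\mathcal M_{\mathbf a,q}(f/\lambda)\|_1\le C_q$. By homogeneity $\|\mathcal M_{\mathbf a,q}f\|_1\le C_q\lambda$, and letting $\lambda\downarrow\|f\|_{\Psi_q}$ finishes the argument, exactly as in Proposition~\ref{prop:RME-lift}.

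The main point needing care is the dyadic comparison. One has to check that replacing $N$ by $2^{t'}$ costs only a constant factor uniformly in $N$, both in the count $2^{t'}\le 4N$ and in the logarithmic factor $\Lambda_q$. The rest is bookkeeping on top of Lemma~\ref{lem:positive-weighted-dyadic}, which is where the key Tonelli and invariance argument already lives.
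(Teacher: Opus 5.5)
Your proposal is correct and follows the paper's proof essentially step by step: Lemma~\ref{lem:positive-weighted-dyadic} with $w_t=\omega_q(t)$, the bound $W_{\omega_q}(u)\le C_q(1+L_q(u))$ from Lemma~\ref{lem:iterated-sum}, the dyadic comparison $N\Lambda_q(N)\gtrsim_q 2^t\Lambda_q(2^t)$, and the Luxemburg rescaling by homogeneity. One harmless slip: with $2^{t-1}\le N<2^t$ and $N\ge 2$ you automatically have $t\ge 2$ (the values $N\in\{2,3\}$ give $t=2$, and $t=1$ corresponds only to the excluded $N=1$), so the device $t'=\max\{t,2\}$ is unnecessary.
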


\begin{proof}
Throughout the proof, $C_q$ denotes a positive constant depending only on $q$, whose value may change from line to line.

By Lemma~\ref{lem:iterated-sum}, after increasing $C_q$ if necessary,
\[
        W_{\omega_q}(u)=\sum_{2\le t<u}\omega_q(t)\le C_q\bigl(1+L_q(u)\bigr),\quad u\ge0.
\]
Let
\[
        D_tg =\frac{1}{2^t}\sum_{n=1}^{2^t}g\circ T^{a_n}.
\]
Since
\[
        \omega_q(t)^{-1}=t\Lambda_{q-1}(t) \ge t,
\]
by Lemma~\ref{lem:positive-weighted-dyadic}, we have 
\[
        \left\|\sup_{t\ge 2}\omega_q(t)D_t|f|\right\|_1\le C_q\left(1+\int_X|f|\bigl(1+L_q(|f|)\bigr)\dd\mu \right).
\]

We next pass from dyadic to arbitrary values of $N$.   Let $2^{t-1}\le N\le 2^t$ with $t\ge 2$. 
 Since $\Lambda_q$ is increasing,
\[
        N\Lambda_q(N)\ge 2^{t-1}\Lambda_q(2^{t-1}).
\]
Moreover, there is a constant $K_q\ge 1$, depending only on $q$, such that
\[
        \Lambda_q(2^t) \le  K_q\Lambda_q(2^{t-1}), \quad t\ge 2.
\]
It follows that
\[
        N\Lambda_q(N) \ge \frac{1}{2K_q}\,2^t\Lambda_q(2^t).
\]
This together with Lemma~\ref{lem:iterated-sum} yields
\[
\begin{aligned}
 \frac{1}{N\Lambda_q(N)} \sum_{n=1}^N |f|\circ T^{a_n}  \le 2K_q \frac{1}{2^t\Lambda_q(2^t)} \sum_{n=1}^{2^t}|f|\circ T^{a_n}  \le C_q \omega_q(t)D_t|f|.
\end{aligned}
\]
Consequently, 
\[
        \mathcal M_{\mathbf a,q}f \le C_q\sup_{t\ge 2}\omega_q(t)D_t|f|.
\]
Combining this estimate with the dyadic maximal estimate above, we have 
\begin{equation*}\label{eq:nonhomogeneous-maximal}
        \|\mathcal M_{\mathbf a,q}f\|_1 \le C_q\left(1+\int_X|f|\bigl(1+L_q(|f|)\bigr)\dd\mu \right).
\end{equation*}

It remains to prove the homogeneous Luxemburg estimate.  Let $\lambda>0$ satisfy
\[
        \int_X \Psi_q\left(\frac{|f|}{\lambda}\right) \dd\mu \le 1.
\]
By \eqref{eq:Psi-comparison}, there is a constant $A_q>0$, depending only on $q$, such that
\[
        \int_X \frac{|f|}{\lambda}\left( 1+L_q\left(\frac{|f|}{\lambda}\right)\right)\dd\mu  \le A_q.
\]
Applying the previous maimal estimate  to $f/\lambda$, we obtain
\[
        \left\|\mathcal M_{\mathbf a,q}\left(\frac{f}{\lambda}\right) \right\|_1\le C_q.
\]
Since $\mathcal M_{\mathbf a,q}$ is positively homogeneous, it follows that
\[
        \|\mathcal M_{\mathbf a,q}f\|_1 \le C_q\lambda.
\]
Taking the infimum over all admissible $\lambda$ proves the desired estimates.
\end{proof}

As a consequence, we can now prove \ref{thm:universal-sparse}\ref{thm:universal-sparse:1}.

\begin{proof}[Proof of \ref{thm:universal-sparse}\ref{thm:universal-sparse:1}.]
For $N\ge 2$, write
\[
        A_N^{\mathbf a, q}f= \frac{1}{N\Lambda_q(N)} \sum_{n=1}^{N}f\circ T^{a_n}.
\]
When $f$ is bounded measurable,  we have 
\[
        \sup_{x\in X}|A_N^{\mathbf a, q}f(x)| \le \frac{\|f\|_\infty}{\Lambda_q(N)} \to 0.
\]
For general $f\in L\log_{q}L$, put $f_K=f\1_{\{|f|\le K\}}$. Observe that for every measurable $h$,
\[
        \sup_{N\ge 2}|A_N^{\mathbf a, q}h| \le \mathcal M_{\mathbf a,q}h.
\]
Hence, by \eqref{eq:Luxemburg-estimate} and  Lemma~\ref{lem:lux-tail}, we have 
\[
 \left\| \sup_{N\ge 2}|A_N^{\mathbf a, q}(f-f_K)| \right\|_1 \le \|\mathcal M_{\mathbf a,q}(f-f_K)\|_1  \le C_q\|f-f_K\|_{\Psi_q} \to 0.
\]
Since $f_K$ is bounded, $A_N^{\mathbf a, q}f_K\to 0$ uniformly. Hence 
\[
\begin{aligned}
 \limsup_{N\to \infty}|A_N^{\mathbf a, q}f|
 &\le \limsup_{N\to \infty}|A_N^{\mathbf a, q}(f-f_K)| +\limsup_{N\to \infty}|A_N^{\mathbf a, q}f_K|      \\
 &\le \sup_{N\ge 2}|A_N^{\mathbf a, q}(f-f_K)|
\end{aligned}
\]
pointwise.  Thus, for every $\varepsilon>0$, by Chebyshev's inequality, 
\[
\begin{aligned}
 \mu\left\{\limsup_{N\to \infty}|A_N^{\mathbf a, q}f|>\varepsilon\right\}
 &\le \mu\left\{ \sup_{N\ge 2}|A_N^{\mathbf a, q}(f-f_K)|>\varepsilon\right\}   \\
 &\le \frac{1}{\varepsilon} \left\| \sup_{N\ge 2}|A_N^{\mathbf a, q}(f-f_K)| \right\|_1   \\
 &\le \frac{C_q}{\varepsilon}\|f-f_K\|_{\Psi_q}.
\end{aligned}
\]
Letting $K\to \infty$, we obtain
\[
        \mu\left\{  \limsup_{N\to \infty}|A_N^{\mathbf a, q}f|>\varepsilon\right\} =0
\]
for every $\varepsilon>0$.  This implies that
\[
        A_N^{\mathbf a, q}f(x)\to 0  \quad \text{a.e.}~x\in X.
\]

Finally, \eqref{eq:Luxemburg-estimate} shows that $\mathcal M_{\mathbf a,q}f\in L^1(\mu)$  and
\[
        |A_N^{\mathbf a, q}f| \le \mathcal M_{\mathbf a,q}f, \quad  N\ge 2
\]
for every $f\in L\log_{q}L$.
Thus Theorem~\ref{thm:local-stability-principle} implies that the convergence is locally stable whenever the underlying metric probability space has the LDP.
\end{proof}

\subsection{Interval realization and assembly of finite stages} \label{subsec:finite-stages}

We begin with the finite-stage assumptions used in the interval construction.  
At each stage,  the maximum of finitely many normalized averages is large on a set of uniformly positive measure. 
Each stage is then realized by finitely many disjoint interval levels, and the levels arising from all stages are subsequently embedded into a single ergodic transformation.
The total mass of the labelled levels yields the required Orlicz estimate, while a sufficiently fine partition of the available remaining set yields the local lower bound.

\begin{defn}\label{def:finite-stage-hypotheses}
Fix $q\in\N$ and  a sequence  $\mathbf a=(a_n)$ of nonnegative integers.  A \emph{finite stage} is a tuple
\[
        \mathfrak s= (\Omega,\B_\Omega,\nu,S,E,H,A,\mathcal K),
\]
where $\Omega$ is a metric space, $\B_\Omega$ is its Borel $\sigma$-algebra, $(\Omega,\B_\Omega,\nu,S)$ is an invertible  measure-preserving system, $E\in\B_\Omega$ with $\nu(E)>0$, $H,A>0$, and $\mathcal K\subset\N$ is finite and non-empty.

Set
\[
        \mathcal G(\mathfrak s)=\left\{\omega\in\Omega:\max_{K\in\mathcal K} \frac{H}{K\Lambda_q(K)}\sum_{n=1}^{K}\mathbf 1_E(S^{a_n}\omega)>A\right\}.
\]
Let $\Phi$ be a  Young function.  A sequence
\[
        \mathfrak s_j=(\Omega_j,\B_{\Omega_j},\nu_j,S_j,  E_j,H_j,A_j,\mathcal K_j), \quad j\in\N,
\]
of finite stages is called \emph{$(\mathbf a,q,\Phi)$-admissible} if
\begin{equation}\label{eq:finite-stage-escape}
        A_j\to \infty, \quad  \min\mathcal K_j\to \infty,
\end{equation}
and there is a constant $0<c_0\le1$ such that
\begin{equation}\label{eq:finite-stage-success}
        \nu_j\bigl(\mathcal G(\mathfrak s_j)\bigr)\ge c_0, \quad \forall \ j \in\N,
\end{equation}
and
\begin{equation}\label{eq:finite-stage-summability}
        \sum_{j=1}^{\infty} 2^{-j}\nu_j(E_j) \Phi(2^j H_j)<\infty.
\end{equation}
\end{defn}

The space $\Omega_j$ need not be finite, and the term ``finite stage'' refers to the finite set $\mathcal K_j$ and the finite orbit information used at the $j$-th stage.

The next lemma converts one finite stage into finitely many interval levels. All intervals in its proof are assumed to be half-open.

\begin{lem}\label{lem:finite-stage-interval}
Let $q\in\N$ and $\mathbf a=(a_n)$ be   a  sequence of nonnegative integers.  
Let $ \mathfrak s= (\Omega,\B_\Omega,\nu,S,E,H,A,\mathcal K)$ be a finite stage.  Let $0<c\le 1$, and assume that there are pairwise
disjoint Borel sets $G_K$, $K\in\mathcal K$, such that
\begin{equation}\label{eq:finite-stage-GK}
        G_K \subset\left\{ \omega\in \Omega : \frac{H}{K\Lambda_q(K)}  \sum_{n=1}^{K}\mathbf 1_E(S^{a_n}\omega)>A \right\}
\end{equation}
and
\begin{equation}\label{eq:finite-stage-GK-mass}
        \sum_{K\in\mathcal K}\nu(G_K)\ge c.
\end{equation}
Let $R\subset[0,1]$ be a finite union of half-open intervals with
$\lambda(R)>1/2$, and let
\[
        0<\eta<\frac{\lambda(R)}4, \quad  0<\theta\le\frac{c\eta}{64},  \quad \rho>0.
\]
Then there are an integer $h\ge 1$, finite sets $\mathcal P$ and $\I$, a map $\kappa:\I\to\mathcal K$, numbers
$\delta_P>0$, $P\in\mathcal P$, pairwise disjoint finite unions of
half-open intervals
\[
        I_{P,u}\subset R, \quad P\in\mathcal P,\quad 0\le u<h,
\]
labelled numbers $\varepsilon_{P,u}\in\{0,1\}$, and finite unions of half-open intervals $V_i,C_i\subset R$, $i\in\I$, with the following
properties:
\begin{enumerate}[label=\textup{(\alph*)},leftmargin=*]
\item 
\begin{equation}\label{eq:finite-stage-level-widths}
        \lambda(I_{P,u})=\delta_P, \quad P\in \mathcal P,\quad 0\le u<h,
\end{equation}
and
\begin{equation}\label{eq:finite-stage-total-level-mass}
        h\sum_{P\in\mathcal P}\delta_P=\eta.
\end{equation}

\item  
\begin{equation}\label{eq:finite-stage-labelled-mass}
        \sum_{P\in\mathcal P}\sum_{u=0}^{h-1} \varepsilon_{P,u}\lambda(I_{P,u})= \eta\nu(E).
\end{equation}

\item 
\[
        R=\bigsqcup_{i\in\I}V_i.
\]
Every component interval of every $V_i$ has length at most $\rho$, and
for every component interval $J$ of $V_i$,
\begin{equation}\label{eq:finite-stage-density}
        C_i\subset V_i,  \quad \lambda(C_i\cap J)=\theta\lambda(J).
\end{equation}

\item Suppose that $T$ is an invertible Lebesgue measure-preserving transformation of $[0,1]$ satisfying
\begin{equation}\label{eq:finite-stage-tower-relations}
        T(I_{P,u})=I_{P,u+1}  \quad\text{modulo null sets}, \quad P\in\mathcal P,\ 0\le u<h-1.
\end{equation}
Define
\begin{equation}\label{eq:finite-stage-F}
        F =H\sum_{P\in\mathcal P}\sum_{\substack{0\le u<h,\,\varepsilon_{P,u}=1}}\mathbf 1_{I_{P,u}}.
\end{equation}
Then  for every $i\in\I$,
\begin{equation}\label{eq:finite-stage-lower-on-Ci}
        \frac{1}{\kappa(i)\Lambda_q(\kappa(i))} \sum_{n=1}^{\kappa(i)}F(T^{a_n}y)>A
\end{equation}
holds for a.e. $y\in C_i$.
\end{enumerate}
\end{lem}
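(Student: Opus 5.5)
The plan is to realise the finite stage $\mathfrak s$ as a finite family of Rokhlin-type columns inside $R$. Each column is indexed by a finite orbit name. The tower relations \eqref{eq:finite-stage-tower-relations} then force $F\circ T^{a_n}$ to copy $H\mathbf 1_E\circ S^{a_n}$ along that name. The free choice of where the level intervals sit is what we use to place the good levels under prescribed density sets $C_i$.

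\emph{Step 1: names and columns.} Let $K^*=\max\mathcal K$ and $a^*=\max_{n\le K^*}a_n$. Choose $h$ so large that $a^*/h\le 1/2$. For $\omega\in\Omega$ define its name
\[
P(\omega)=\bigl(\mathbf 1_E(S^u\omega),\ (\mathbf 1_{G_K}(S^u\omega))_{K\in\mathcal K}\bigr)_{0\le u<h}.
\]
This takes finitely many values; let $\mathcal P$ be the set of names of positive $\nu$-measure. Put $\delta_P=\eta\,\nu(P(\omega)=P)/h$ and let $\varepsilon_{P,u}$ be the $E$-coordinate of $P$ at position $u$. Then \eqref{eq:finite-stage-total-level-mass} is immediate. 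Since $S$ preserves $\nu$,
\[
\sum_{P,u}\varepsilon_{P,u}\delta_P=\frac{\eta}{h}\sum_{u<h}\nu(S^{-u}E)=\eta\nu(E),
\]
which is \eqref{eq:finite-stage-labelled-mass}.

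Call the level $(P,u)$ \emph{$K$-good} if $u<h-a^*$ and the $G_K$-coordinate of $P$ at $u$ equals $1$. Since the $G_K$ are disjoint, each level is good for at most one $K$. The same stationarity computation gives
\[
\sum_{(P,u)\ K\text{-good}}\delta_P=\frac{\eta}{h}\sum_{u<h-a^*}\nu(S^{-u}G_K)\ge \frac{\eta\,\nu(G_K)}{2}.
\]

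\emph{Step 2: geometry, parts (a) and (c).} Take $\I=\{K\in\mathcal K:\nu(G_K)>0\}$ and $\kappa=\mathrm{id}$. Apply Lemma~\ref{lem:fine-filling} with mesh $\rho$ and $\ell_K=u_K=\lambda(R)\nu(G_K)/\sum_{K'}\nu(G_{K'})$; if $|\I|=1$, simply take $V_K=R$ refined into pieces of length $\le\rho$. This partitions $R=\bigsqcup_K V_K$ into finite unions of intervals with components of length at most $\rho$. Let $C_K$ be the left $\theta$-proportion of each component, so \eqref{eq:finite-stage-density} holds. By \eqref{eq:finite-stage-GK-mass},
\[
\lambda(C_K)=\theta\lambda(V_K)\le \theta\,\frac{\nu(G_K)}{c}\le \frac{\eta\,\nu(G_K)}{64},
\]
which is far below the $K$-good mass $\eta\nu(G_K)/2$.

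Now place the levels. Subdivide each column $P$ into finitely many subcolumns, splitting every level $(P,u)$ into pieces of the same widths. Using Lemma~\ref{lem:finite-allocation}, allocate pieces of $K$-good levels so that their union exactly covers $C_K$, and realise each piece as a finite union of half-open intervals carried by measure-preserving piecewise translations. All remaining level pieces go disjointly into $R\setminus\bigcup_K C_K$. This is possible because
\[
h\sum_P\delta_P=\eta<\lambda(R)/4 \quad\text{and}\quad \lambda\Bigl(\bigcup_K C_K\Bigr)\le\theta\le\frac{\eta}{64}.
\]
Refining $\mathcal P$ to the set of subcolumns, with their widths as the new $\delta_P$ and $\varepsilon$ inherited, keeps (a) and (b) intact, since each subcolumn inherits the name of its parent column.

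\emph{Step 3: the lower bound, part (d).} Let $y\in C_K$. By Step~2, $y$ lies in a $K$-good level $I_{P,u}$, so $u+a_n\le u+a^*<h$ for all $n\le K$. The tower relations give $T^{a_n}y\in I_{P,u+a_n}$ a.e., hence
\[
F(T^{a_n}y)=H\varepsilon_{P,u+a_n}=H\,\mathbf 1_E(S^{u+a_n}\omega)
\]
for any $\omega$ with name $P$. Because the $G_K$-coordinate of $P$ at $u$ is $1$, we have $S^u\omega\in G_K$, and \eqref{eq:finite-stage-GK} yields \eqref{eq:finite-stage-lower-on-Ci}.

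\emph{Main obstacle.} The delicate point is the exact-measure bookkeeping in Step~2. Every subcolumn must have equal widths at all of its $h$ levels. At the same time, the $K$-good pieces must tile each $C_K$ exactly, and all intervals must stay pairwise disjoint inside $R$. I would handle this by first fixing the target decomposition of each $C_K$ through Lemma~\ref{lem:finite-allocation}, where the targets are subcolumn widths indexed by $K$-good positions. I would then cut every column proportionally so that all of these widths occur, and discard zero-width pieces, which does not affect (a)--(d).
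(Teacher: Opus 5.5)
Your proposal is correct and is essentially the paper's construction: names (atoms) of the algebra generated by the $S^{-u}E$ and $S^{-t}G_K$ with $\delta_P=\eta\nu(P)/h$, Lemma~\ref{lem:fine-filling} for the $V_i$ and their left $\theta$-proportions $C_i$, Lemma~\ref{lem:finite-allocation} to cover each $C_i$ by good levels ($P\subset S^{-u}G_{\kappa(i)}$ with $u+a_n<h$ for $n\le\kappa(i)$), disjoint filling of the remaining mass, and transfer of the orbit name through the tower relations. Two small remarks. First, the subcolumn splitting you single out as the main obstacle is unnecessary, because nothing requires a level to lie entirely inside or outside the $C_i$; the paper simply takes $I_{P,u}=\widetilde C_{P,u}\sqcup D_{P,u}$, the allocated piece plus filler. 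Second, when $|\I|=1$ you cannot take $V_K=R$, since its component intervals are the maximal components of $R$ and these may be longer than $\rho$; instead duplicate the index as the paper does ($\I=\{(K,0),(K,1)\}$ with $\kappa(K,\ell)=K$), so that Lemma~\ref{lem:fine-filling} applies with $n\ge2$.
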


\begin{proof}
Define 
\[
        K_*=\max\mathcal K, \quad L=\max_{1\le n\le K_*}a_n, \quad M=L+1  \quad \text{and } \quad h=M+L+1.
\]
Let $\mathcal A$ be the finite Boolean algebra  generated by $S^{-u}E$, $0\le u<h$, and by $S^{-t}G_K$, $0\le t<M$, $K\in\mathcal K$.  
Denote by $\mathcal P$ the collection of atoms of $\mathcal A$ having positive $\nu$-measure.  
These atoms cover $\Omega$ up to a null set, and hence
\begin{equation*}\label{eq:finite-stage-atoms-sum}
        \sum_{P\in\mathcal P}\nu(P)=1.
\end{equation*}
Let  $\delta=\frac{\eta}{h}$. For each $P\in\mathcal P$, set  $\delta_P=\delta\nu(P)$.  For $0\le u<h$, define
\begin{equation*}\label{eq:finite-stage-labels}
        \varepsilon_{P,u}= \begin{cases}
        1, & P\subset S^{-u}E,\\
        0, & P\subset \Omega\setminus S^{-u}E.
        \end{cases}
\end{equation*}
For $K\in\mathcal K$, put
\begin{equation*}\label{eq:finite-stage-mK}
\begin{aligned}
        m_K  =\sum_{t=0}^{M-1}\sum_{\substack{P\in\mathcal P,\,P\subset S^{-t}G_K}} \delta_P =M\delta \nu(G_K).
\end{aligned}
\end{equation*}
Indeed, $S^{-t}G_K\in\mathcal A$, so it is a union of atoms of $\mathcal A$, and omitting the atoms of zero measure does not change its
measure. In particular, $m_K>0$ if and only if $\nu(G_K)>0$.
Since $M/h=1/2$, condition \eqref{eq:finite-stage-GK-mass} yields 
\begin{equation*}\label{eq:finite-stage-total-mK}
        \sum_{K\in\mathcal K}m_K = M\delta\sum_{K\in\mathcal K}\nu(G_K) \ge \frac{c\eta}{2}.
\end{equation*}
Define 
\[
        \I = \{(K,\ell):K\in\mathcal K,\ m_K>0,\ \ell\in\{0,1\}\}  \ \text{ and }\ \kappa(K,\ell)=K.
\]
The two copies are used only to ensure that $\#\I\ge 2$,  as required by Lemma~\ref{lem:fine-filling}.
For $i=(K,\ell)$, set $b_i=m_K/2$, and define
\[
        \alpha =  \frac{\theta\lambda(R)}{\sum_{i\in\I}b_i}, \quad  c_i=\alpha b_i.
\]
By the choice of  $\theta$, it follows that    
\[
        0<\alpha \le\frac{2\theta\lambda(R)}{c\eta}  \le\frac1{32}.
\]
Consequently,
\begin{equation*}\label{eq:finite-stage-ci}
        0<c_i<b_i  \  \text{ and } \   \sum_{i\in\I}c_i=\theta\lambda(R).
\end{equation*}

Applying Lemma~\ref{lem:fine-filling} to $R$, with $\ell_i=u_i= {c_i}/{\theta}$, $i\in\I$, and mesh $\rho$, 
we obtain a partition  
\[
R=\bigsqcup_{i\in\I}\,V_i
\]
such that every $V_i$ is a finite union of half-open intervals, $ \lambda(V_i)= {c_i}/{\theta}$, 
and every component interval of every $V_i$ has length at most $\rho$. In each component interval $J$ of $V_i$, take its left subinterval of
relative length $\theta$, and let $C_i$ be the union of the selected subintervals.  Then
\begin{equation*} 
        \lambda(C_i)=c_i   \  \text{ and } \     \lambda(C_i\cap J)=\theta\lambda(J),
\end{equation*}
which proves \eqref{eq:finite-stage-density}.

For $K\in\mathcal K$ with $m_K>0$, let $\Gamma_K=\{(t,P):0\le t<M,\ P\in\mathcal P,\ P\subset S^{-t}G_K\}$. We immediately have
\begin{equation*} 
        \sum_{\substack{i\in\I,\, \kappa(i)=K}}\lambda(C_i)= \alpha m_K< m_K = \sum_{(t,P)\in\Gamma_K}\delta_P.
\end{equation*}
Then, applying Lemma~\ref{lem:finite-allocation} simultaneously to the family $(C_i)_{\kappa(i)=K}$ and $(\delta_P)_{(t,P)\in\Gamma_K}$, we obtain finite unions of half-open intervals $C_{i,t,P}$ such that
\begin{equation*}\label{eq:finite-stage-Ci-partition}
        C_i =\bigsqcup_{(t,P)\in\Gamma_K}C_{i,t,P},
\end{equation*}
and
\begin{equation*}\label{eq:finite-stage-allocation-bound}
        \sum_{\substack{i\in\I,\, \kappa(i)=K}}\lambda(C_{i,t,P}) \le\delta_P  
\end{equation*}
for every $(t,P)\in\Gamma_K$.
For $P\in\mathcal P$ and $0\le u<M$, define
\[
        \widetilde C_{P,u} = \bigsqcup_{\substack{i\in\I,\,(u,P)\in\Gamma_{\kappa(i)}}} C_{i,u,P},
\]
and put $\widetilde C_{P,u}=\emptyset$ for $M\le u<h$.
Since the sets $G_K$ are pairwise disjoint, for each fixed pair $(P,u)$ there is at most one $K\in\mathcal K$ such that $P\subset S^{-u}G_K$. 
Therefore, if such a $K$ exists, then
\[
        \widetilde C_{P,u}  =  C_{(K,0),u,P}\sqcup  C_{(K,1),u,P}.
\]
Otherwise $\widetilde C_{P,u}=\emptyset$. In particular, $\widetilde C_{P,u}$ is a finite union of half-open intervals, and 
\begin{equation*} 
        \lambda(\widetilde C_{P,u})\le\delta_P.
\end{equation*}

The sets $\widetilde C_{P,u}$ are pairwise disjoint. Indeed, the pieces  $C_{i,t,P}$  belonging to a fixed $C_i$ are disjoint, while the sets $C_i$ themselves are contained in the pairwise disjoint sets $V_i$. Moreover, we have
\begin{equation*}\label{eq:finite-stage-prescribed-total}
        \sum_{P\in\mathcal P}\sum_{u=0}^{h-1} \lambda(\widetilde C_{P,u})  =  \sum_{i\in\I}\lambda(C_i)  =  \theta\lambda(R).
\end{equation*}

Put
\[
        d_{P,u}=\delta_P-\lambda(\widetilde C_{P,u}).
\]
Since $\sum_{P\in\mathcal P}\nu(P)=1$, we obtain 
\[
\begin{aligned}
        \sum_{P\in\mathcal P}\sum_{u=0}^{h-1}d_{P,u}
        &= \eta-\theta\lambda(R)  \\
        &< \lambda(R)-\theta\lambda(R)    \\
        &= \lambda\left( R\setminus \bigsqcup_{P\in\mathcal P}\bigsqcup_{u=0}^{h-1}\widetilde C_{P,u} \right).
\end{aligned}
\]
By successively cutting half-open intervals from the available set, we may choose pairwise disjoint finite unions of half-open intervals
\[
        D_{P,u}\subset R\setminus\bigsqcup_{P'\in\mathcal P}\bigsqcup_{v=0}^{h-1} \widetilde C_{P',v}
\]
such that $\lambda(D_{P,u})=d_{P,u}$.  Set
\[
        I_{P,u} = \widetilde C_{P,u}\sqcup D_{P,u}.
\]
Then the sets $I_{P,u}$ are pairwise disjoint, $\lambda(I_{P,u})=\delta_P$, and 
\[
         h\sum_{P\in\mathcal P}\delta_P =\eta \sum_{P\in\mathcal P}\nu(P)= \eta,
\]
This proves \eqref{eq:finite-stage-level-widths} and \eqref{eq:finite-stage-total-level-mass}.
Moreover, using the invariance of $\nu$, we obtain 
\[
\begin{aligned}
        \sum_{P\in\mathcal P}\sum_{u=0}^{h-1} \varepsilon_{P,u}\lambda(I_{P,u})
        &= \delta \sum_{u=0}^{h-1} \sum_{P\in\mathcal P,\, P\subset S^{-u}E} \nu(P) \\
        &=\delta\sum_{u=0}^{h-1}\nu(S^{-u}E) = h\delta\nu(E) =\eta\nu(E),
\end{aligned}
\]
proving \eqref{eq:finite-stage-labelled-mass}.

It remains to prove \eqref{eq:finite-stage-lower-on-Ci}.  Fix $i\in\I$, put $K=\kappa(i)$, and consider $(t,P)\in\Gamma_K$. For $y\in C_{i,t,P}$, we have 
\[
        C_{i,t,P} \subset \widetilde C_{P,t}\subset I_{P,t}\ \text{ and } \ P\subset S^{-t}G_K.
\]
For $1\le n\le K$, it is clear that  
\[
        t+a_n \le M-1+L <h.
\]
Therefore, by conditions \eqref{eq:finite-stage-tower-relations}  and \eqref{eq:finite-stage-F}, we obtain that, outside a null subset of $C_{i,t,P}$,
\[
        T^{a_n}y\in I_{P,t+a_n}, \quad F(T^{a_n}y)=H\varepsilon_{P,t+a_n}.
\]
For every $\omega\in P$, \eqref{eq:finite-stage-labels} implies 
\[
        \varepsilon_{P,t+a_n} =\mathbf 1_E(S^{t+a_n}\omega).
\]
Since $P\subset S^{-t}G_K$, condition \eqref{eq:finite-stage-GK} yields
\[
        \frac{1}{K\Lambda_q(K)} \sum_{n=1}^{K}F(T^{a_n}y) = \frac{H}{K\Lambda_q(K)} \sum_{n=1}^{K} \mathbf 1_E(S^{t+a_n}\omega)>A
\]
for a.e. $y\in C_{i,t,P}$.  Since $\{C_{i,t,P}:(t,P)\in\Gamma_K\}$ is a finite partition of $C_i$, the same inequality holds for a.e. $y\in C_i$, proving \eqref{eq:finite-stage-lower-on-Ci}.
\end{proof}

\begin{thm}\label{thm:finite-stage-lower}
Let $q\in\N$,  $\mathbf a=(a_n)$ be  a sequence of nonnegative integers, and  $\Phi$ be a finite-valued
Young function.  Suppose that there is an $(\mathbf a,q,\Phi)$-admissible sequence of finite stages.  
Then, $(\mathbf a,\mathbf D^{(q)})$ admits an  $L^\Phi$ local $\infty$-sweeping out  counterexample on  $[0,1]$. More precisely,
for every $0<\varepsilon<1$, there are an invertible ergodic Lebesgue
measure-preserving transformation $T$ of $[0,1]$, a nonnegative
$f\in L^\Phi([0,1])$, and a measurable set $Y\subset[0,1]$ such that
\[
        \lambda(Y)>1-\varepsilon,
\]
and
\begin{equation}\label{eq:finite-stage-local-divergence}
        \limsup_{r\downarrow 0,\,N\to \infty} \mathcal L_r\left(\frac{1}{N\Lambda_q(N)} \sum_{n=1}^{N}f(T^{a_n}\cdot) \right)(x) = \infty
\end{equation}
for a.e. $x\in Y$.
\end{thm}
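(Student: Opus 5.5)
The plan is to assemble the admissible stages $\mathfrak s_j$ by an inductive interval construction, in the same spirit as the proof of Theorem~\ref{thm:weighted-sweeping}, and then to glue everything into one transformation with Lemma~\ref{lem:tower-embedding}.

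Fix $0<\varepsilon<1$. Passing to a subsequence, we may assume $A_j\ge j$ and that $j\mapsto\min\mathcal K_j$ is increasing. Replacing $j$ by $j+j_0$ (which changes the summability condition \eqref{eq:finite-stage-summability} only by a bounded factor, since $\Phi$ is increasing and the shift only rescales the weights) we may also assume the mass budget is small. At stage $j$, set $\eta_j=\varepsilon 2^{-j-4}$ and
\[
R_j=[0,1]\setminus\bigcup_{j'<j}\bigcup_{P,u}I^{(j')}_{P,u}.
\]
This is a finite union of half-open intervals with $\lambda(R_j)>1-\varepsilon/16>1/2$, so $\eta_j<\lambda(R_j)/4$.

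To meet the hypotheses of Lemma~\ref{lem:finite-stage-interval}, I need disjoint sets $G_K$. They come from $\mathcal G(\mathfrak s_j)$: order $\mathcal K_j$ and let $G_K$ be the set of points where $K$ is the first index whose average exceeds $A_j$. These sets are disjoint and $\sum_K\nu_j(G_K)=\nu_j(\mathcal G(\mathfrak s_j))\ge c_0$, so we may take $c=c_0$. We also take $\theta_j=c_0\eta_j/64$, $\rho_j=2^{-j}$, and use $H=2^jH_j$ in place of $H_j$ and $2^jA_j$ in place of $A_j$, so that the scaled threshold $\theta_j 2^jA_j/4$ tends to infinity.

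Apply Lemma~\ref{lem:finite-stage-interval} to obtain the levels $I^{(j)}_{P,u}$, the covering $R_j=\bigsqcup_i V^{(j)}_i$, and the subsets $C^{(j)}_i$. The levels over all $j$ are pairwise disjoint, the total tower mass is $\sum_j\eta_j<\varepsilon/16<1$, and each column $P$ has levels of equal measure $\delta_P$. Choosing measure-preserving piecewise-translation isomorphisms $I_{P,u}\to I_{P,u+1}$, Lemma~\ref{lem:tower-embedding} yields an invertible ergodic $T$ satisfying every relation \eqref{eq:finite-stage-tower-relations}.

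Define $f=\sum_j F_j$, where $F_j$ is given by \eqref{eq:finite-stage-F} with height $2^jH_j$. The supports are disjoint, and by \eqref{eq:finite-stage-labelled-mass},
\[
\int\Phi(f)\,\mathrm d\lambda=\sum_j\eta_j\nu_j(E_j)\Phi(2^jH_j)\lesssim_\varepsilon\sum_j 2^{-j}\nu_j(E_j)\Phi(2^jH_j)<\infty,
\]
so $f\in L^\Phi$. Since $f\ge F_j$ and everything is nonnegative, \eqref{eq:finite-stage-lower-on-Ci} gives, for a.e.\ $y\in C^{(j)}_i$,
\[
\frac{1}{\kappa(i)\Lambda_q(\kappa(i))}\sum_{n=1}^{\kappa(i)} f(T^{a_n}y)>2^jA_j.
\]

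Let $Y$ be the complement of all tower levels, so $\lambda(Y)>1-\varepsilon/16$. Take $x\in Y$ off the countably many endpoints. For every $j$ we have $x\in R_j$, so $x$ lies in a component $J$ of some $V^{(j)}_i$ with $|J|\le 2^{-j}$. Put $r=2|J|$. Then $J\subset B(x,r)$ and $\lambda(B(x,r))\le 4|J|$, and by \eqref{eq:finite-stage-density}, $\lambda(C_i\cap J)=\theta_j|J|$. Hence
\[
\mathcal L_r\Bigl(\frac{1}{N\Lambda_q(N)}\sum_{n\le N}f\circ T^{a_n}\Bigr)(x)\ge \frac{\theta_j}{4}\,2^jA_j,
\]
with $N=\kappa(i)\ge\min\mathcal K_j\to\infty$ and $r\to0$.

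The main obstacle is making this lower bound tend to infinity, because $\theta_j\propto\eta_j\sim2^{-j}$ decays. This is exactly why the factor $2^j$ appears in the height and in \eqref{eq:finite-stage-summability}: it gives $\theta_j2^jA_j/4\asymp c_0\varepsilon A_j\to\infty$. The bookkeeping must be kept consistent so that the same scaling by $2^j$ both produces this divergence and is exactly absorbed by \eqref{eq:finite-stage-summability}.
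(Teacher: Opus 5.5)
Your argument is correct and follows the paper's proof step for step: first-index disjointification of $\mathcal G(\mathfrak s_j)$, Lemma~\ref{lem:finite-stage-interval} on $R_j$ with $\theta_j=c_0\eta_j/64$ and mesh $2^{-j}$, Lemma~\ref{lem:tower-embedding}, $f=\sum_jF_j$, and the ball estimate with $r=2|J|$. The only difference is bookkeeping. The paper keeps the heights $H_j$, takes $\eta_j=c_\varepsilon/j^2$ after passing to a subsequence with $A_j\ge j^4$, and uses convexity of $\Phi$ to get $\eta_j\Phi(H_j)\le 2^{-j}\Phi(2^jH_j)$. Your choice $\eta_j=\varepsilon 2^{-j-4}$, with height $2^jH_j$ and threshold $2^jA_j$, makes $\int\Phi(f)$ exactly $\tfrac{\varepsilon}{16}$ times the admissibility sum and gives $\theta_j2^jA_j/4=c_0\varepsilon A_j/2^{12}\to\infty$ directly. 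Consequently your preliminary subsequence and index-shift steps are unnecessary. If you keep the subsequence step, it needs the convexity inequality $2^{-j}\Phi(2^jH)\le 2^{-l}\Phi(2^{l}H)$ for $l\ge j$ to preserve \eqref{eq:finite-stage-summability}, and you did not state this.
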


\begin{proof}
Let $ \{\mathfrak s_j = (\Omega_j,\B_{\Omega_j},\nu_j,S_j,  E_j,H_j,A_j,\mathcal K_j)\}_{j\in\N}$ be an $(\mathbf a,q,\Phi)$-admissible sequence, 
and let $c_0$ be as in \eqref{eq:finite-stage-success}.  Since $A_j\to \infty$, we choose a strictly increasing sequence $(l_j)$ such that $A_{l_j}\ge j^4$ for all $j\in\N$.
After replacing $\mathfrak s_j$ by $\mathfrak s_{l_j}$ and relabelling, we may assume that 
\begin{equation*}\label{eq:finite-stage-A-growth}
         A_j\ge j^4, \quad j\in\N.
\end{equation*}
Indeed, it suffices to check that condition \eqref{eq:finite-stage-summability} is preserved.  For each $j\in\N$, since $l_j\ge j$, by convexity
of $\Phi$ and $\Phi(0)=0$, we have
\[
        2^{-j}\Phi(2^jH_{l_j})\le 2^{-l_j}\Phi(2^{l_j}H_{l_j}).
\]
Consequently,
\[
\sum_{j=1}^{\infty} 2^{-j}\nu_{l_j}(E_{l_j})\Phi(2^jH_{l_j}) \le \sum_{m=1}^{\infty} 2^{-m}\nu_m(E_m)\Phi(2^mH_m) <\infty.
\]
Thus \eqref{eq:finite-stage-summability} remains valid after relabelling.

Let $0<\varepsilon<1$. Choose $c_\varepsilon>0$ sufficiently small, and define $\eta_j=\frac{c_\varepsilon}{j^2}$, such that
\begin{equation*}\label{eq:finite-stage-eta}
        0<\eta_j\le 1  \ \text{ and } \  \sum_{j=1}^{\infty}\eta_j  < \min\left\{\varepsilon,\frac1{16}\right\}.
\end{equation*} 
By the assumption of $A_j$, we have $\eta_jA_j\to \infty$. Set $\theta_j=\frac{c_0\eta_j}{64}$. Write $\mathcal K_j = \{K_{j,1}<\dots<K_{j,N_j}\}$. For each $1\le\ell\le N_j$, put
\[
        B_{j,K_{j,\ell}} =\left\{\omega\in \Omega_j: \frac{H_j}{K_{j,\ell}\Lambda_q(K_{j,\ell})} \sum_{n=1}^{K_{j,\ell}}\mathbf 1_{E_j}(S_j^{a_n}\omega)>A_j \right\},
\]
and
\[
        G_{j,K_{j,\ell}}=  B_{j,K_{j,\ell}}\setminus \bigcup_{r<\ell}B_{j,K_{j,r}},
\]
where the union is understood to be empty when $\ell=1$.
Then the sets $G_{j,K}$, $K\in\mathcal K_j$, are pairwise disjoint,  each satisfies the property \eqref{eq:finite-stage-GK}, and
\begin{equation*}\label{eq:finite-stage-disjoint-success}
        \sum_{K\in\mathcal K_j}\nu_j(G_{j,K})=\nu_j\bigl(\mathcal G(\mathfrak s_j)\bigr)\ge c_0.
\end{equation*}

We now choose the interval levels inductively.  Suppose that the levels for the stages preceding $j$ have already been selected, 
and let $R_j$ denote the complement of their union in $[0,1]$.  At this finite stage $j$, after discarding finitely
many endpoints, $R_j$ is a finite union of half-open intervals, and
\[
        \lambda(R_j) > 1-\sum_{i=1}^{\infty}\eta_i > \frac{15}{16}.
\]

Since $\eta_j<1/16<\lambda(R_j)/4$, applying Lemma~\ref{lem:finite-stage-interval} with $R=R_j$, $c=c_0$, $\eta=\eta_j$,  $\theta=\theta_j$ and $\rho=2^{-j}$,
we obtain an integer $h_j$, finite sets $\mathcal P_j,\I_j$, a map $\kappa_j:\I_j\to\mathcal K_j$, levels $I_{j,P,u}$, numbers $\delta_{j,P}>0$, $P\in \mathcal P_j$, labels $\varepsilon_{j,P,u}\in \{0,1\}$, and sets $V_{j,i},C_{j,i}$ satisfying properties (a)-(d) of the lemma. In particular, we have  
\[
\delta_{j,P}=\lambda(I_{j,P,u}), \quad P\in\mathcal P_j,\quad 0\le u<h_j,
\]
and  
\[
\sum_{P\in\mathcal P_j}h_j\delta_{j,P}=\eta_j.
\]
Consequently,
\[
        \sum_{j=1}^{\infty} \sum_{P\in\mathcal P_j}h_j\delta_{j,P} = \sum_{j=1}^{\infty}\eta_j <1.
\] 
For each $j,P$ and $0\le u<h_j-1$, choose a measure-preserving isomorphism
\[
        \phi_{j,P,u}:I_{j,P,u}\to  I_{j,P,u+1}.
\]
Applying Lemma~\ref{lem:tower-embedding} to the countable family of columns indexed by the pairs $(j,P)$ with $\ell_{j,P}=h_j-1$, 
we obtain an invertible ergodic Lebesgue measure-preserving transformation $T$ of $[0,1]$ such that
\[
        T=\phi_{j,P,u} \quad\text{a.e. on }I_{j,P,u},  \quad 0\le u<h_j-1.
\]
Define
\[
        F_j=H_j\sum_{P\in\mathcal P_j}\sum_{\substack{0\le u<h_j,\,\varepsilon_{j,P,u}=1}} \mathbf 1_{I_{j,P,u}} \ \text{ and }
        \  f=\sum_{j=1}^{\infty}F_j.
\]
Clearly, the supports of the functions $F_j$ are pairwise disjoint.  By \eqref{eq:finite-stage-labelled-mass}, $\eta_j\le 1$ and convexity of $\Phi$, we have
\[
\begin{aligned}
        \int_{[0,1]}\Phi(f)\dd\lambda
        &=  \sum_{j=1}^{\infty} \eta_j\nu_j(E_j)\Phi(H_j)    \\
        &\le\sum_{j=1}^{\infty} 2^{-j}\nu_j(E_j)\Phi(2^jH_j)  <\infty.
\end{aligned}
\]
That is, $f\in L^\Phi([0,1])$.

Denote
\[
        Y= [0,1]\setminus \bigsqcup_{j=1}^{\infty}\bigsqcup_{P\in\mathcal P_j}  \bigsqcup_{u=0}^{h_j-1}I_{j,P,u}.
\]
Then
\[
        \lambda(Y)= 1-\sum_{j=1}^{\infty}\eta_j >  1-\varepsilon.
\]
Fix $x\in Y$ outside the countable set of endpoints arising in the construction.  For each $j$, the point $x$ belongs to a unique component interval $J_j(x)$ of a unique set $V_{j,i_j(x)}$.  Put
\[
        K_j(x)=\kappa_j(i_j(x)) \ \text{ and } \ r_j(x)=2|J_j(x)|.
\]
Since $|J_j(x)|\le2^{-j}$,  $r_j(x)\to 0$.  Moreover,
\[
        J_j(x)\subset B(x,r_j(x)) \ \text{ and } \  \lambda(B(x,r_j(x)))\le4|J_j(x)|.
\]
From \eqref{eq:finite-stage-density}, it follows that
\[
        \lambda(C_{j,i_j(x)}\cap J_j(x))
        =
        \theta_j|J_j(x)|.
\]
Applying Lemma~\ref{lem:finite-stage-interval} with the transformation $T$, by \eqref{eq:finite-stage-lower-on-Ci} we obtain
\[
        \frac{1}{K_j(x)\Lambda_q(K_j(x))}  \sum_{n=1}^{K_j(x)} F_j(T^{a_n}y)>A_j
\]
for a.e. $y\in C_{j,i_j(x)}$.  Since $0\le F_j\le f$, we have
\[
    \mathcal L_{r_j(x)} \left(\frac{1}{K_j(x)\Lambda_q(K_j(x))}\sum_{n=1}^{K_j(x)}f(T^{a_n}\cdot)\right)(x)  
    \ge A_j  \frac{\lambda(C_{j,i_j(x)}\cap J_j(x))}{\lambda(B(x,r_j(x)))} 
    \ge \frac{\theta_jA_j}{4}.
\]
Observe that
\[
        \frac{\theta_jA_j}{4}=  \frac{c_0\eta_jA_j}{256}\to \infty \ \text{ and } \ K_j(x)\ge\min\mathcal K_j\to \infty.
\]
Thus \eqref{eq:finite-stage-local-divergence} holds at every such $x$, and hence for almost every $x\in Y$.
\end{proof}

\begin{coro}\label{cor:finite-stage-sharp}
Let $q\in\N$ and let $\mathbf a=(a_n)$ be  a sequence of nonnegative integers.  Assume that  for every  Young function $\Phi$ satisfying
\[
        \Phi(t)=o\bigl(tL_q(t)\bigr) \quad (t\to \infty),
\]
there is an $(\mathbf a,q,\Phi)$-admissible sequence of finite stages.
Then the endpoint $ L\log_{q}L$ is sharp for the local observation problem associated with $(\mathbf a,\mathbf D^{(q)})$.
\end{coro}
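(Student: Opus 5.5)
The corollary is the conjunction of the two clauses in the definition of the sharp Orlicz endpoint for $(\mathbf a,\mathbf D^{(q)})$ with $s=q$. I would verify clause (a) and clause (b) separately, each by citing a result already proved.

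For clause (a), fix a measure-preserving system whose underlying metric probability space has the LDP, and fix $f\in L\log_qL$. Theorem~\ref{thm:universal-positive} holds for every sequence of nonnegative integers, with no hypothesis on $\mathbf a$. It gives $\|\mathcal M_{\mathbf a,q}f\|_1\le C_q\|f\|_{\Psi_q}<\infty$. The proof of \ref{thm:universal-sparse}\ref{thm:universal-sparse:1} then shows that $A_N^{\mathbf a,q}f\to 0$ a.e. Since $|A_N^{\mathbf a,q}f|\le\mathcal M_{\mathbf a,q}f\in L^1$ for $N\ge 2$, Theorem~\ref{thm:local-stability-principle} applies with $F=0$ and majorant $H=\mathcal M_{\mathbf a,q}f$, so the convergence is locally stable with limit $f^*=0\in L^1$.

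The index $N=1$ has to be handled separately, because $\mathcal M_{\mathbf a,q}$ is a supremum over $N\ge 2$ only. The single term $|f\circ T^{a_1}|$ is integrable, so I would add it to $H$. Alternatively, one can note that a joint limit as $N\to\infty$ ignores finitely many indices. Either way the majorant stays in $L^1$.

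For clause (b), let $\Phi$ be any Young function with $\Phi(t)=o(tL_q(t))$. By hypothesis there is an $(\mathbf a,q,\Phi)$-admissible sequence of finite stages. Theorem~\ref{thm:finite-stage-lower} then produces an $L^\Phi$ local $\infty$-sweeping out counterexample on $[0,1]$, even in the stronger almost-full-measure form. That theorem assumes $\Phi$ is finite-valued, and this holds because every Young function in the paper's sense is finite-valued.

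There is no substantive obstacle here; the corollary is purely an assembly of earlier results. The only point needing care is the bookkeeping in clause (a): the majorant must be genuinely integrable and cover every index $N\ge 1$, which is exactly the $N=1$ adjustment described above.
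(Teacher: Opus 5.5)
Your proposal is correct and follows the paper's proof: clause (a) comes from Theorem~\ref{thm:universal-positive}, combined with the a.e.\ convergence argument and Theorem~\ref{thm:local-stability-principle}, and clause (b) comes directly from Theorem~\ref{thm:finite-stage-lower}. Adding $|f\circ T^{a_1}|$ to the majorant to cover the index $N=1$ is harmless bookkeeping that the paper leaves implicit.
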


\begin{proof}
The positive  assertion follows from Theorem~\ref{thm:universal-positive}, and the lower assertion follows from Theorem~\ref{thm:finite-stage-lower}.
\end{proof}
 
\subsection{Residue blocks under polynomial ratio separation}\label{subsec:poly-ratio-blocks}

The purpose of this subsection is to verify the finite-stage assumptions for sequences with polynomial ratio separation.  
The residue construction used in this subsection is adapted from Quas and Wierdl \cite{QuasWierdl}.  
In the proof of \cite[Corollary~4.13]{QuasWierdl}, the condition
\[
        \frac{a_{n+1}/a_n}{n^\varepsilon}\to \infty
\]
is used to realize any prescribed finite residue pattern on a non-degenerate interval of parameters.  The covering of the residue classes which converts
such residue prescriptions into large maximal averages is carried out in the proof of \cite[Lemma~4.11]{QuasWierdl}.  The polynomial ratio
separation assumed below implies the displayed condition for every $0<\varepsilon<\eta$.

Here we isolate the finite quantitative form needed for the later interval construction.  At each stage we select finitely many averaging lengths and
an interval of admissible parameters, and obtain both a quantitative support bound and a uniform lower bound for the set on which one of the selected
averages is large.  These estimates are not stated separately in the cited argument, so we give the residue construction in full.

For $t\ge 2$, set
\[
        D_t=\{2^{t-1}+1,\ldots,2^t\}.
\]
If $U\subset\{2,3,\ldots\}$ is finite, put
\[
        I_U=\bigsqcup_{t\in U}D_t.
\]

The next lemma is the finite nesting step used in the proof of \cite[Corollary~4.13]{QuasWierdl}.  We provide the detailed proof because the
interval form will be convenient below.

\begin{lem}\label{lem:lacunary-residue-nesting}
Let $Q\ge 1$,  and let $0<b_1<\cdots<b_L$ be real numbers such that 
\[
        \frac{b_{i+1}}{b_i}>2Q, \quad  1\le i<L.
\]
For every $r_1,\ldots,r_L\in\{0,\ldots,Q-1\}$, the set of $\alpha>0$ such that
\[
        \lfloor\alpha b_i\rfloor\equiv r_i\pmod Q, \quad  1\le i\le L,
\]
contains a non-degenerate compact interval.
\end{lem}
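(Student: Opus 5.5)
Our plan is a nested-interval argument in the parameter $\alpha$, processing the constraints from the smallest $b_1$ up to the largest $b_L$. For each $i$ and each $m\in\Z$ with $m\equiv r_i\pmod Q$, write
\[
        W_{i,m}=\Bigl[\frac{m}{b_i},\frac{m+1}{b_i}\Bigr).
\]
For $\alpha$ in this set we have $\lfloor\alpha b_i\rfloor=m\equiv r_i\pmod Q$. Each admissible window has length $1/b_i$. Consecutive admissible windows for fixed $i$ start $Q/b_i$ apart, so every interval of length at least $Q/b_i$ contains the starting point of some admissible window, and every interval of length at least $(Q+1)/b_i$ contains a full admissible window.

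We will construct by induction closed intervals $J_1\supset J_2\supset\cdots\supset J_L$ of positive length. Each $J_i$ is required to lie inside an admissible window for $b_i$, shrunk slightly to make it compact. Begin with $J_1=[m_1/b_1,(m_1+\tfrac12)/b_1]$, where $m_1\equiv r_1\pmod Q$ is any integer with $m_1\ge 1$; this keeps $\alpha>0$. Now suppose $J_i$ has been built with $|J_i|=1/(2b_i)$ and $J_i\subset W_{i,m_i}$. The ratio condition gives
\[
        |J_i|=\frac{1}{2b_i}>\frac{Q}{b_{i+1}}.
\]
By the spacing observation, $J_i$ then contains the left endpoint $m/b_{i+1}$ of some admissible window $W_{i+1,m}$ with $m\equiv r_{i+1}\pmod Q$.

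The main obstacle is that this window may stick out past the right end of $J_i$. To handle it, we instead choose $m$ to be the \emph{smallest} admissible integer with $m/b_{i+1}\ge\min J_i$. Then
\[
        \frac{m}{b_{i+1}}\le \min J_i+\frac{Q}{b_{i+1}},
\]
and the window ends at
\[
        \frac{m+1}{b_{i+1}}\le \min J_i+\frac{Q+1}{b_{i+1}}.
\]
We need this endpoint to stay inside $J_i$, that is, $(Q+1)/b_{i+1}\le 1/(2b_i)$. This requires $b_{i+1}/b_i\ge 2(Q+1)$, which is slightly more than the hypothesis $b_{i+1}/b_i>2Q$ provides.

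To close this gap, we take the whole previous window rather than half of it, letting $J_i$ be nearly the full window. We can start with $J_1=[m_1/b_1,(m_1+1-\epsilon)/b_1]$, which has length $(1-\epsilon)/b_1$. The requirement then becomes $(Q+1)/b_{i+1}\le(1-\epsilon)/b_i$. Since $2Q\ge Q+1$ for $Q\ge1$, the hypothesis gives $b_{i+1}/b_i>2Q\ge Q+1$. Choosing $\epsilon>0$ small enough that
\[
        (1-\epsilon)\min_i\frac{b_{i+1}}{b_i}\ge Q+1
\]
(possible because there are finitely many strict inequalities), we then set
\[
        J_{i+1}=\Bigl[\frac{m}{b_{i+1}},\frac{m+1-\epsilon}{b_{i+1}}\Bigr]\subset J_i .
\]
This interval is compact and non-degenerate, and the induction closes. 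The final interval $J_L$ is a non-degenerate compact interval on which every constraint $\lfloor\alpha b_i\rfloor\equiv r_i\pmod Q$ holds, because $J_L\subset J_i\subset W_{i,m_i}$ for every $i$. It lies in $(0,\infty)$ because $m_1\ge1$.
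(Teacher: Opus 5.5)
Your proof is correct and follows the paper's own argument: the nested choice of admissible windows $[m/b_i,(m+1)/b_i)$ with $m\equiv r_i \pmod Q$, closed by the key inequality $(Q+1)/b_{i+1}<1/b_i$, which comes from $2Q\ge Q+1$. The only differences are cosmetic. The paper rescales $\alpha=Q\beta$, works with half-open components, and takes a compact subinterval of the interior of the final window at the end, whereas you keep closed intervals throughout by shrinking each window by $\epsilon$.
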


\begin{proof}
Write $\alpha=Q\beta$.  For $1\le i\le L$, let
\[
        U_i=\left\{\beta \in\R: \beta b_i\pmod1\in \left[\frac{r_i}{Q},\frac{r_i+1}{Q}\right) \right\}.
\]
Each component of $U_i$ is a half-open interval of length $1/(Q b_i)$, and the left endpoints of consecutive components are separated by  $1/b_i$.

We can construct nested components $J_1\supset J_2\supset\dots\supset J_L $ with 
\[
     |J_i|=\frac1{Qb_i} \ \text{ and } \ J_i \subset \bigcap_{\ell=1}^i U_\ell.
\]
First, choose a component $J_1\subset (0,\infty)$ of $U_1$.  Suppose that $J_i$, with $1\le i<L$, has been chosen.  Since
\[
        \frac1{b_{i+1}}+\frac1{Qb_{i+1}}=\frac{Q+1}{Qb_{i+1}} < \frac1{Qb_i} = |J_i|,
\]
the interval $J_i$ contains a complete component of $U_{i+1}$.  Choose one such component as $J_{i+1}$.

For $\beta\in J_L$, write
\[
        \beta b_i=m_i+\theta_i, \quad m_i\in\Z_+, \quad \frac{r_i}{Q}\le\theta_i<\frac{r_i+1}{Q}.
\]
Then
\[
        \lfloor Q\beta b_i\rfloor=Qm_i+r_i,
\]
and hence
\[
        \lfloor\alpha b_i\rfloor\equiv r_i\pmod Q.
\]
Any non-degenerate compact interval contained in the interior of $QJ_L$ meets the required property.
\end{proof}

The residue allocation and circle-rotation argument in the next lemma are the finite ingredients used in the proof of \cite[Lemma~4.11]{QuasWierdl}.  
We give the precise quantitative form needed later.

\begin{lem}\label{lem:finite-QW-rotation}
Let $\mathbf a=(a_n)_{n\ge 1}$ be a sequence of positive integers, $U\subset\{2,3,\ldots\}$ be finite and nonempty, $N, M\ge 1$, and $Q=NM \ge 2$. 
 Assume that positive numbers $(w_t)_{t\in U}$ satisfy
\[
        \sum_{t\in U}w_t>2M,  \quad   \frac1N<w_t<\frac{2^t}{4N},  \quad t\in U.
\]
Assume further that every map
\[
        \mathbf r: I_U\to \Z/Q\Z
\]
is realized by some $\alpha>0$, in the sense that
\[
        \lfloor\alpha a_n\rfloor\equiv r(n)\pmod Q,
        \quad  n\in I_U.
\]
Then there exists $\alpha>0$ such that, with $E=[0,2/Q)\subset\mathbb T$,  for the rotation
\[
        T_\alpha x=x+\frac{\alpha}{Q}\pmod1,
\]
one has
\[
        \max_{t\in U}\frac{8Nw_t}{2^t}\sum_{n=1}^{2^t}\1_E(T_\alpha^{a_n}x)>1 
\]
holds for every $x\in\mathbb T$.
\end{lem}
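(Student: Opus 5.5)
The plan is to pick a residue pattern $\mathbf r\colon I_U\to\Z/Q\Z$ by a covering argument on the residues, and then take $\alpha$ to be the parameter that realizes this pattern, as the hypothesis allows. The key observation concerns the rotation $T_\alpha$. Write $\alpha a_n=\lfloor\alpha a_n\rfloor+\{\alpha a_n\}$ and let $x\in[j/Q,(j+1)/Q)$. Then
\[
T_\alpha^{a_n}x=x+\frac{\lfloor\alpha a_n\rfloor}{Q}+\frac{\{\alpha a_n\}}{Q}\pmod 1 .
\]
If $\lfloor\alpha a_n\rfloor\equiv -j\pmod Q$, then $x+\lfloor\alpha a_n\rfloor/Q\in[0,1/Q)$ modulo $1$. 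Adding a quantity in $[0,1/Q)$ lands in $[0,2/Q)=E$. So for $x$ in the $j$-th arc,
\[
\sum_{n=1}^{2^t}\1_E(T_\alpha^{a_n}x)\ \ge\ \#\{n\in D_t:\ r(n)\equiv -j \pmod Q\}.
\]
This uses $D_t\subset\{1,\dots,2^t\}$ and the nonnegativity of the summands. The task is therefore purely combinatorial. We must assign residues so that every class $-j\in\Z/Q\Z$ is hit more than $2^t/(8Nw_t)$ times inside some single block $D_t$.

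For the allocation, set $s_t=\lfloor 2Nw_t\rfloor$ for $t\in U$. Since $2Nw_t>2$, we have $s_t\ge 2Nw_t-1> Nw_t$. Hence
\[
\sum_{t\in U}s_t> N\sum_{t\in U} w_t>2NM=2Q\ge Q .
\]
We can then choose pairwise disjoint sets $S_t\subset\Z/Q\Z$ with $|S_t|\le s_t$ and $\bigsqcup_{t}S_t=\Z/Q\Z$. To do this, fill residues greedily in order of $t$ and truncate; some $S_t$ may be empty. For each $t$ with $S_t\neq\emptyset$, distribute the $2^{t-1}$ elements of $D_t$ as evenly as possible among the residues in $S_t$. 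Each residue in $S_t$ then receives at least $\lfloor 2^{t-1}/|S_t|\rfloor$ indices. If $S_t=\emptyset$, the values of $\mathbf r$ on $D_t$ can be set arbitrarily. This defines $\mathbf r$ on all of $I_U$, because the blocks $D_t$ are disjoint.

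The main quantitative point is the strict inequality. Put $y=2^{t-1}/|S_t|$. Since $|S_t|\le 2Nw_t<2^{t}/2$ by the assumption $w_t<2^t/(4N)$, we get $y>2^{t-1}/2^{t-1}$. The elementary bound I plan to use is $\lfloor y\rfloor>y-1\ge y/2$, which needs $y\ge2$. So I will instead use $|S_t|\le 2Nw_t<2^{t-2}$, which again follows from $w_t<2^t/(4N)\cdot\frac12$. If this factor turns out to be tight, I will shrink to $s_t=\lfloor Nw_t\rfloor+1$, which still gives $\sum s_t>Q$ and leaves more room.

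With $y\ge 2$ in hand, each residue in $S_t$ occurs at least
\[
\lfloor y\rfloor> \frac{y}{2}=\frac{2^{t-2}}{|S_t|}\ge\frac{2^{t-2}}{2Nw_t}=\frac{2^t}{8Nw_t}
\]
times in $D_t$. Now take $\alpha$ realizing $\mathbf r$. Given $x$ in the arc $j$, choose the $t$ with $-j\in S_t$. By the first step,
\[
\frac{8Nw_t}{2^t}\sum_{n=1}^{2^t}\1_E(T_\alpha^{a_n}x)>1 .
\]
The arcs cover $\mathbb T$, so this gives the conclusion for every $x$. I expect the only delicate step to be balancing the constants in the choice of $s_t$, so that both $\sum_t s_t\ge Q$ and $\lfloor 2^{t-1}/s_t\rfloor>2^t/(8Nw_t)$ hold under the stated bounds on $w_t$. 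The rest is bookkeeping.
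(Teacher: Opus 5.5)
Your proof is correct and follows essentially the paper's route: the same arc/residue observation (for $x\in[j/Q,(j+1)/Q)$, a residue $\lfloor\alpha a_n\rfloor\equiv -j\pmod Q$ sends $T_\alpha^{a_n}x$ into $[0,2/Q)$), and the same covering of $\Z/Q\Z$ by residue sets of size of order $Nw_t$, each residue repeated about $2^{t-2}/(Nw_t)$ times in $D_t$ (the paper takes $m_t=\min\{Q,\lfloor Nw_t\rfloor\}$ and uses $\lfloor Nw_t\rfloor\ge Nw_t/2$ to get $\sum_t m_t\ge Q$). There is one slip: $w_t<2^t/(4N)$ only gives $Nw_t<2^{t-2}$, not $2Nw_t<2^{t-2}$, so you need your fallback $s_t=\lfloor Nw_t\rfloor+1$. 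It satisfies $s_t\le 2^{t-2}$ (hence $2^{t-1}/|S_t|\ge 2$) and $s_t<2Nw_t$, and with it your final chain of inequalities goes through verbatim; alternatively, keep $s_t=\lfloor 2Nw_t\rfloor$ and use $\lfloor 2z\rfloor>z$ for $z=2^{t-2}/|S_t|>1/2$.
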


\begin{proof}
For $t\in U$, set $ m_t=\min\{Q,\lfloor Nw_t\rfloor\}$. Since $N w_t>1$, we have $m_t\ge 1$.  Hence if $m_t<Q$, then 
\[
        m_t=\lfloor Nw_t\rfloor\ge \frac{Nw_t}{2}.
\]
Consequently, either $m_t=Q$ for some $t$, or
\[
        \sum_{t\in U}m_t \ge  \frac N2\sum_{t\in U}w_t > NM = Q.
\]
In both cases, we get 
\[
        \sum_{t\in U}m_t\ge Q.
\]

Write $U=\{t_1<\dots<t_s\}$, set $c_0=0$, and define  recursively 
\[
        R_{t_i}=\{c_{i-1},c_{i-1}+1,\ldots,c_{i-1}+m_{t_i}-1\}  \pmod Q,\quad  c_i=c_{i-1}+m_{t_i}
\]
for $1\le i\le s$. Then $\#R_t=m_t$ and
\[
        \bigcup_{t\in U}R_t=\Z/Q\Z.
\]

For $t\in U$, prescribe residues on $D_t=\{2^{t-1}+1,\ldots,2^t\}  $ so that every $r\in R_t$ occurs at least $\left\lfloor\frac{2^{t-1}}{m_t}\right\rfloor$ times.  This is possible since $m_t\left\lfloor\frac{2^{t-1}}{m_t}\right\rfloor \le 2^{t-1}=\#D_t  $. Complete the prescription arbitrarily on the remaining indices of $D_t$.
Since $m_t\le Nw_t<2^{t-2}$, we have $2^{t-1}/m_t>2$, and therefore
\[
        \left\lfloor\frac{2^{t-1}}{m_t}\right\rfloor  \ge  \frac{2^{t-2}}{m_t}  \ge  \frac{2^{t-2}}{Nw_t}.
\]
Doing this for all $t\in U$ defines a map $\mathbf r:I_U\to\Z/Q\Z$. By hypothesis, choose $\alpha>0$ such that  
\[
        \lfloor\alpha a_n\rfloor\equiv \mathbf r(n)\pmod Q, \quad n\in I_U.
\] 

Let $E=[0,2/Q)\subset\mathbb T$.  Fix $x\in\mathbb T$. Choose $0\le r<Q$ such that 
\[
        x\in  \left[\frac{Q-r}{Q},\frac{Q-r+1}{Q}\right) \pmod1.
\]
 Since the sets $R_t$, $t\in U$, cover $\Z/Q\Z$,  there exists
$t\in U$ such that $r\in R_t$. By construction, there are at
least $\left\lfloor\frac{2^{t-1}}{m_t}\right\rfloor$  indices $n\in D_t$ for which $\mathbf r(n)=r$. 
For each such $n$, write
\[
        \alpha a_n=k_nQ+r+\theta_n, \quad k_n\in\Z,\quad 0\le\theta_n<1,
\]
and 
\[
        x=\frac {Q-r}{Q}+\frac{\eta}{Q}\pmod 1, \quad 0\le\eta<1.
\]
Then we have
\[
        T_\alpha^{a_n}x=\frac{\eta+\theta_n}{Q}\pmod 1.
\]
Since $0\le\eta+\theta_n<2\le Q$, it follows that
\[
        T_\alpha^{a_n}x\in[0,2/Q)=E.
\]
Consequently,
\[
        \frac{8Nw_t}{2^t}\sum_{n=1}^{2^t}\1_E(T_\alpha^{a_n}x)\ge \frac{8Nw_t}{2^t}\frac{2^{t-2}}{Nw_t} =2>1.
\]
Since $x\in\mathbb T$ was arbitrary, the desired conclusion follows.
\end{proof}

\begin{lem}\label{lem:iterated-log-inversion}
Fix $q\in\N$ and $C\ge 1$. Let $(u_m)$ and $(v_m)$ be positive sequences such that $u_m\to \infty$, $ v_m\ge u_m$ and $ L_q(v_m)\le C L_q(u_m)$ for all sufficiently large $m\in\N$. Then 
\[
       \log v_m=o(u_m).
\]
\end{lem}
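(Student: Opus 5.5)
The plan is to peel off the iterated logarithms one at a time, using the identity $L_j(t)=\log(\mathrm e+L_{j-1}(t))$ with the convention $L_0(t)=t$. We propagate a ``little-$o$'' bound downward from level $q$ to level $1$. Since $\log v_m\le L_1(v_m)$, it suffices to prove $L_1(v_m)=o(u_m)$. Note that only $u_m\to\infty$ and the comparison $L_q(v_m)\le CL_q(u_m)$ will be used; the hypothesis $v_m\ge u_m$ is not needed.

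The inductive claim is: for each $j$ with $1\le j\le q$,
\[
L_j(v_m)=o\bigl(L_{j-1}(u_m)\bigr)\quad(m\to\infty).
\]
\emph{Base case $j=q$.} We have $L_q(v_m)\le CL_q(u_m)=C\log(\mathrm e+L_{q-1}(u_m))$. Since $u_m\to\infty$, also $L_{q-1}(u_m)\to\infty$. Hence $C\log(\mathrm e+L_{q-1}(u_m))=o(L_{q-1}(u_m))$.

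\emph{Step from $j$ to $j-1$, for $2\le j\le q$.} Fix $0<\varepsilon<1$. By the inductive hypothesis, $L_j(v_m)\le\varepsilon L_{j-1}(u_m)$ for all large $m$. Exponentiating gives
\[
L_{j-1}(v_m)\le \mathrm e+L_{j-1}(v_m)=\exp\bigl(L_j(v_m)\bigr)\le \exp\bigl(\varepsilon L_{j-1}(u_m)\bigr)=\bigl(\mathrm e+L_{j-2}(u_m)\bigr)^{\varepsilon}.
\]
Because $L_{j-2}(u_m)\to\infty$ and $\varepsilon<1$, we have $(\mathrm e+x)^\varepsilon=o(x)$ as $x\to\infty$. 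Since $\varepsilon$ was arbitrary, this yields $L_{j-1}(v_m)=o(L_{j-2}(u_m))$.

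After reaching $j=1$ we get $L_1(v_m)=o(L_0(u_m))=o(u_m)$, and therefore $\log v_m\le L_1(v_m)=o(u_m)$. In the case $q=1$ the base case is already the conclusion: $L_1(v_m)\le C\log(\mathrm e+u_m)=o(u_m)$.

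The only delicate point is choosing the right invariant. A bound of the form $L_j(v_m)\le L_j(u_m)^{1+o(1)}$ does not survive exponentiation, because the exponent becomes unbounded. By contrast, the bound $L_j(v_m)\le\varepsilon L_{j-1}(u_m)$ exponentiates to a power $\varepsilon<1$ of the next level, and that is again $o$ of the next level. The rest is routine bookkeeping, with ``for all sufficiently large $m$'' at each of the finitely many steps.
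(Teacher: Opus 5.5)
Your argument is correct and uses essentially the paper's mechanism. You exponentiate $L_q(v_m)\le CL_q(u_m)$ one level at a time, and at each level the constant or slack turns into a power $<1$ of the next iterate, which is then absorbed. The paper arranges this differently. It first writes $L_1(v_m)\le E_{q-1}(CL_q(u_m))$ with $E_r=\exp^{\circ r}$, and then proves the sequence-free bound $E_{k-1}(AL_k(u))\le u^{\varepsilon}$ for large $u$ by induction on $k$, using $L_k=L_{k-1}\circ L_1$. You instead peel from the outside with the invariant $L_j(v_m)=o(L_{j-1}(u_m))$. The difference is only bookkeeping.

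One correction: your remark that the hypothesis $v_m\ge u_m$ is not needed is wrong. What you actually prove is the upper bound $\log v_m\le L_1(v_m)=o(u_m)$. The two-sided statement $\log v_m=o(u_m)$ also needs $\log v_m\ge -o(u_m)$. That lower bound is exactly what $v_m\ge u_m\to\infty$ supplies, since then $0\le \log u_m\le\log v_m$ for large $m$; the paper closes the same way, with $0\le\log v_m\le L_1(v_m)\le u_m^{1/2}$. Without this hypothesis the lemma fails: take $u_m=m$ and $v_m=\mathrm{e}^{-m^2}$. Then $L_q(v_m)\le L_q(u_m)$ by monotonicity of $L_q$, yet $\log v_m=-m^2$. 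Keep the hypothesis and add that one line.
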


\begin{proof}
For $r\ge0$, write $E_r=\exp^{\circ r}$, with $E_0$ the identity.
Since
\[
        L_{j-1}(t)\le \exp(L_j(t)), \quad j\ge 2,
\]
we have
\[
        L_1(v_m)\le E_{q-1}\bigl(L_q(v_m)\bigr) \le E_{q-1}\bigl(C L_q(u_m)\bigr).
\]

We claim that: for every fixed $k\in\N$, $A>0$, and $\varepsilon>0$,
\[
        E_{k-1}\bigl(A L_k(u)\bigr)\le u^\varepsilon
\]
holds for all sufficiently large $u$. This follows by induction on $k$. The case $k=1$ is immediate from $L_1(u)=O(\log u)=o(u^\varepsilon)$. 
Now suppose that the assertion holds for $k-1$, where $k\ge 2$. Since $L_k(u)=L_{k-1}(L_1(u))$, we have
\[
        E_{k-2}\bigl(A L_k(u)\bigr)\le L_1(u)^{1/2}
\]
for all sufficiently large $u$, and hence for any $\varepsilon>0$,
\[
        E_{k-1}\bigl(A L_k(u)\bigr) \le \exp(L_1(u)^{1/2})\le u^\varepsilon,
\]
because $L_1(u)^{1/2}=o(\log u)$.

Taking $k=q$, $A=C$, and $\varepsilon=1/2$, together with $v_m\ge u_m\to \infty$,  we obtain
\[
        0\le \log v_m\le L_1(v_m)\le u_m^{1/2}=o(u_m)
\]
for all sufficiently large $m$. 
\end{proof}

\begin{lem}\label{lem:block-parameter-growth}
Fix $q\in \N$.  For each sufficiently large integer $M$, choose $t_0=t_0(M)\in \N$ such that
\[
        M\le L_q(t_0)\le 2M,
\]
and let $t_1=t_1(M)$ be the least integer $t\ge t_0$ satisfying
\[
        \sum_{s=t_0}^{t}\frac1{\Lambda_q(2^s)} > 2M.
\]
Then we have
\[
        \log\!\bigl(\Lambda_q(2^{t_1})M\bigr)=o(t_0) \quad (M\to \infty).
\]
Consequently,
\[
        \Lambda_q(2^{t_1})M=o(2^{t_0}),  \quad  M=o(2^{t_0}),
\]
and, for all sufficiently large $M$,
\begin{equation}\label{eq:block-slack}
        2\Lambda_q(2^{t_1})M+M \le 2^{t_0-4}.
\end{equation}
\end{lem}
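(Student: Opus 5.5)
Set $u_M = t_0$ and $v_M = \Lambda_q(2^{t_1})M$. The plan is to feed these into Lemma~\ref{lem:iterated-log-inversion}, which then gives $\log v_M = o(t_0)$ at once. Two hypotheses need checking: $u_M\to\infty$ with $v_M\ge u_M$, and a comparison $L_q(v_M)\le C L_q(u_M)$.

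First, $t_0\to\infty$ because $L_q(t_0)\ge M$. For $v_M\ge t_0$, use $\Lambda_q(2^{t_1})\ge L_1(2^{t_0})\gtrsim t_0$ and $M\ge1$; if needed, adjust by a constant, which costs nothing inside the $\log$.

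The main step is to bound $t_1$. By Lemma~\ref{lem:iterated-sum}, $1/\Lambda_q(2^s)\asymp_q \omega_q(s)$. Hence $\sum_{s=t_0}^{t}1/\Lambda_q(2^s)$ is comparable to $L_q(t)-L_q(t_0)$ up to additive constants. This follows from Lemma~\ref{lem:log-calc} applied at level $q-1$, via the integral $\int_{t_0}^t du/(u\Lambda_{q-1}(u))$, which equals $L_q(t)-L_q(t_0)$ up to $O_q(1)$ by \eqref{eq:iterated-log-derivative}. By minimality of $t_1$,
\[
\sum_{s=t_0}^{t_1-1}\frac1{\Lambda_q(2^s)}\le 2M,
\]
and each term is at most $1$, so the sum up to $t_1$ is at most $2M+1$. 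Therefore
\[
L_q(t_1)\le L_q(t_0)+C_q(M+1)\le C_q' M\le C_q' L_q(t_0).
\]

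Next, transfer this bound to $v_M$. We have $\Lambda_q(2^{t_1})\le C_q t_1\Lambda_{q-1}(t_1)\le C_q t_1^{q}$. Then Lemma~\ref{lem:log-calc}\ref{lem:log-calc:3} and Lemma~\ref{lem:Psi-estimates}\ref{lem:Psi-estimates:3} give
\[
L_q(v_M)\le C\bigl(1+L_q(t_1)+L_q(M)\bigr).
\]
Since $L_q(M)\le M\le L_q(t_0)$ and $L_q(t_0)\to\infty$ absorbs the constant, we get $L_q(v_M)\le C L_q(t_0)$ for large $M$. Lemma~\ref{lem:iterated-log-inversion} now yields $\log v_M=o(t_0)$.

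For the consequences, $\log v_M=o(t_0)$ gives $v_M\le 2^{\epsilon t_0}$ for any fixed $\epsilon$ and large $M$, so $v_M=o(2^{t_0})$. Since $\Lambda_q\ge1$ we have $M\le v_M$, so also $M=o(2^{t_0})$. Finally, $2v_M+M\le 3v_M\le 2^{t_0-4}$ once $M$ is large, which is \eqref{eq:block-slack}.

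The delicate part is the bookkeeping of additive constants in the comparison of the block sum with $L_q(t_1)-L_q(t_0)$. This can be handled because $M\to\infty$ absorbs all $O_q(1)$ errors. The case $q=1$ is included: there $\Lambda_0=1$ and the sum is harmonic, so the same argument applies.
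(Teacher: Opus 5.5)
Your proof is correct and follows essentially the paper's route: minimality of $t_1$ gives $L_q(t_1)\lesssim_q M\le L_q(t_0)$, and Lemma~\ref{lem:iterated-log-inversion} converts this comparison into the $o(t_0)$ bound. The only difference is bookkeeping. You apply the inversion lemma directly to $v_M=\Lambda_q(2^{t_1})M$, after transferring the $L_q$-comparison through the polynomial bound on $\Lambda_q(2^{t_1})$ and $L_q(M)\le L_q(t_0)$. The paper instead applies it to $t_1$ and then bounds $\log\Lambda_q(2^{t_1})\lesssim_q\log(1+t_1)$ and $\log M\le\log\log(\mathrm e+t_0)$ separately. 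One small point: \eqref{eq:iterated-log-derivative} gives multiplicative, not additive $O_q(1)$, comparability of the block sum with $L_q(t)-L_q(t_0)$, but the multiplicative form is all your argument needs.
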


\begin{proof}
Since $L_q$ is increasing, $L_q(t)\to \infty$ and $L_q(t+1)-L_q(t)\to 0$, the choice of $t_0$ is possible for all sufficiently large $M$.  
The existence of $t_1$ follows from Lemma~\ref{lem:iterated-sum}.

By minimality of $t_1$,
\[
        \sum_{s=t_0}^{t_1-1}\frac1{\Lambda_q(2^s)}\le 2M.
\]
Hence, again by Lemma~\ref{lem:iterated-sum}, we have
\[
\begin{aligned}
        1+L_q(t_1-1)
        &\lesssim_q \sum_{s=2}^{t_1-1}\frac1{\Lambda_q(2^s)}  \\
        &\le \sum_{s=2}^{t_0-1}\frac1{\Lambda_q(2^s)}+2M \lesssim_q 1+L_q(t_0)+M \lesssim_q M.
\end{aligned}
\]
As $M\to \infty$, we have $t_0(M)\to \infty$, and hence $t_1(M)\to \infty$. Thus $L_q(t_1)-L_q(t_1-1)=o(1)$. It follows that
\[
        L_q(t_1)\lesssim_q M\lesssim L_q(t_0).
\]
Thus, by Lemma~\ref{lem:iterated-log-inversion}, we have
\[
        \log t_1=o(t_0).
\]

For fixed $q$,
\[
        \Lambda_q(2^t)\lesssim_q (1+t)^q,
\]
and hence
\[
        \log\Lambda_q(2^{t_1}) \lesssim_q 1+\log(1+t_1) =o(t_0).
\]
Moreover,
\[
        M\le L_q(t_0)\le L_1(t_0)=\log(\mathrm e+t_0),
\]
so $\log M=o(t_0)$.  It follows that
\[
        \log\!\bigl(\Lambda_q(2^{t_1})M\bigr)=o(t_0).
\]
Since $2^{t_0}=\exp((\log2)t_0)$, the remaining assertions immediately follow.
\end{proof}

\begin{defn}\label{def:poly-ratio}
A sequence $\mathbf a=(a_n)_{n\ge 1}$ of positive integers has \emph{polynomial ratio separation} if there are $\eta>0$ and $n_*\in\N$ such that
\[
        \frac{a_{n+1}}{a_n}\ge n^\eta 
\]
for all $ n\ge n_*$.
\end{defn}

\begin{prop}\label{prop:poly-ratio-stages}
Let $q\in\N$, and let $\mathbf a=(a_n)$ have polynomial ratio separation.  For every  Young function $\Phi$ satisfying
\[
        \Phi(t)=o\bigl(tL_q(t)\bigr)\quad (t\to \infty),
\]
there exists an $(\mathbf a,q,\Phi)$-admissible sequence of finite stages.
\end{prop}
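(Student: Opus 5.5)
For each large integer $M$ I would build one finite stage from a rotation on $\mathbb T$, using Lemma~\ref{lem:finite-QW-rotation} with the block parameters of Lemma~\ref{lem:block-parameter-growth}. Put $t_0=t_0(M)$ and $t_1=t_1(M)$, let $U=\{t_0,\dots,t_1\}$, and set
\[
w_t=\frac{1}{\Lambda_q(2^t)}.
\]
Then $\sum_{t\in U}w_t>2M$. Next choose $N=\lceil 2\Lambda_q(2^{t_1})\rceil$, so that $w_t\ge 1/\Lambda_q(2^{t_1})>1/N$. By \eqref{eq:block-slack}, $Nw_t\le N\le 2^{t_0-4}<2^t/4$, so $w_t<2^t/(4N)$. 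Finally set $Q=NM$.

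The residue hypothesis of Lemma~\ref{lem:finite-QW-rotation} comes from Lemma~\ref{lem:lacunary-residue-nesting} applied to $b_i=a_n$, $n\in I_U$. This needs $a_{n+1}/a_n>2Q$ for all $n\ge 2^{t_0-1}$. Polynomial ratio separation gives $a_{n+1}/a_n\ge n^\eta\ge 2^{(t_0-1)\eta}$. Lemma~\ref{lem:block-parameter-growth} gives $\log Q=\log(NM)=o(t_0)$, so the inequality holds for large $M$. This is exactly where the sequence hypothesis enters.

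With $\alpha$ as in Lemma~\ref{lem:finite-QW-rotation}, the rotation $S=T_\alpha$ on $\Omega=\mathbb T$ and $E=[0,2/Q)$ satisfy, for every $x$, some $t\in U$ with
\[
\frac{8Nw_t}{2^t}\sum_{n\le 2^t}\1_E(S^{a_n}x)>1.
\]
I would convert this into the stage form with $K=2^t$ and $\mathcal K=\{2^t:t\in U\}$. Since $8Nw_t/2^t=8N/(K\Lambda_q(K))$, we get
\[
\frac{H}{K\Lambda_q(K)}\sum_{n\le K}\1_E(S^{a_n}x)>A \quad\text{with } H=8NA,
\]
for any $A$ of our choosing. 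So $\mathcal G(\mathfrak s)=\Omega$ and $c_0=1$. We also have $\min\mathcal K=2^{t_0}\to\infty$, and we need $A=A_M\to\infty$ slowly.

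The remaining and main obstacle is the summability \eqref{eq:finite-stage-summability}, in which the $o(tL_q(t))$ hypothesis must be used. Here $\nu(E)=2/Q=2/(NM)$ and $H=8NA$, so the $j$-th term (with $M=M_j$) is
\[
2^{-j}\,\frac{2}{NM}\,\Phi(2^j\cdot 8NA).
\]
Write $\Phi(s)\le\epsilon(s)\,s\,L_q(s)$ with $\epsilon(s)\to 0$. The term is then at most
\[
\frac{16A}{M}\,\epsilon(\cdot)\,L_q(2^{j+3}NA).
\]
Now $N\lesssim\Lambda_q(2^{t_1})\lesssim(1+t_1)^q$, and $L_q(t_1)\lesssim M$ as shown in the proof of Lemma~\ref{lem:block-parameter-growth}. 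Hence $L_q(N)\lesssim_q 1+L_q(t_1)\lesssim M$, using Lemma~\ref{lem:log-calc}\ref{lem:log-calc:3} and Lemma~\ref{lem:Psi-estimates}\ref{lem:Psi-estimates:3}. Therefore
\[
L_q(2^{j+3}NA)\lesssim 1+L_q(2^j)+L_q(A)+M.
\]
I would choose $M_j$ increasing fast enough that $M_j\ge j$ dominates $L_q(2^j)$, and also such that $\epsilon(s)\le 2^{-j}/A_j$ for all $s\ge 8N_jA_j$. The latter is possible since the argument tends to infinity with $M$. Taking $A_j=j$, the term becomes $\lesssim A_j\,\epsilon\lesssim 2^{-j}$, which is summable.

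The bookkeeping I expect to be delicate is that the growth of $\Lambda_q(2^{t_1})$ only enters through $L_q$, so its contribution is comparable to $M$ and cancels the $1/M$ factor. This cancellation is precisely why the endpoint is $L\log_qL$ rather than one logarithm higher.
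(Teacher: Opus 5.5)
Your proposal is correct and follows the paper's proof essentially step by step. It uses the same block parameters $t_0(M),t_1(M)$ and the same choices $N=\lceil 2\Lambda_q(2^{t_1})\rceil$, $Q=NM$, $E=[0,2/Q)$, $H_j=8jN_j$, $A_j=j$, $\mathcal K_j=\{2^t:t\in U_j\}$. It realizes the residues via Lemma~\ref{lem:lacunary-residue-nesting} using $\log Q=o(t_0)$, and it bounds the cost through $L_q(2^{j+3}jN)\lesssim M$, which cancels the factor $1/M$. The only difference is bookkeeping: you keep constants uniform in $j$ (via $M_j\ge j$) to get terms $\lesssim 2^{-j}$, whereas the paper allows $j$-dependent constants and picks $M_j$ so the $j$-th term is below a prescribed summable $\varepsilon_j$. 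You should also state explicitly that $M$ is taken large enough that $2^{t_0-1}\ge n_*$, as the paper does.
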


\begin{proof}
Set
\[
        \rho_\Phi(s)= \frac{\Phi(s)}{s(1+L_q(s))}.
\] 
By assumption, $\rho_\Phi(s)\to 0$ as $s\to \infty$. Since $\mathbf a$ has polynomial ratio separation, choose $\eta>0$ and $n_*\in\N$ as in Definition~\ref{def:poly-ratio}. Fix a positive summable sequence $(\varepsilon_j)_{j\ge 1}$.

In view of Definition~\ref{def:finite-stage-hypotheses}, it is enough to
construct, for every $j\in\N$, a finite stage
\[
        \mathfrak s_j= \bigl(  \mathbb T,\B_{\mathbb T},\lambda,   T_j,E_j,H_j,A_j,\mathcal K_j  \bigr)
\]
such that
\[
        A_j=j,\quad  \min\mathcal K_j\ge 2^j, \quad  \mathcal G(\mathfrak s_j)=\mathbb T,
\]
and
\begin{equation}\label{eq:poly-stage-cost-target}
        2^{-j}\lambda(E_j)\Phi(2^jH_j)  \le\varepsilon_j.
\end{equation}

Fix $j\in\N$.  For every sufficiently large integer $M$, let $t_0=t_0(M)$ and $t_1=t_1(M)$ be given by Lemma~\ref{lem:block-parameter-growth}, and put
\[
        N=\left\lceil2\Lambda_q(2^{t_1})\right\rceil,   \quad   U=\{t_0,\ldots,t_1\},  \quad Q=NM,
\]
and
\[
        w_t=\frac1{\Lambda_q(2^t)}, \quad t\in U.
\]
Since $\Lambda_q(2^{t_1})\ge 1$, we have
\[
        Q  \le  \bigl(2\Lambda_q(2^{t_1})+1\bigr)M  \le 3\Lambda_q(2^{t_1})M.
\]
Then by Lemma~\ref{lem:block-parameter-growth}, 
\[
        \log Q=o(t_0) \quad (M\to \infty).
\]
Since $t_0(M)\to \infty$, after increasing $M$ we may assume that
\begin{equation}\label{eq:poly-stage-large-M}
        t_0\ge  \max\{j,3\}, \quad  2^{t_0-1}\ge n_*,  \quad  2^{\eta(t_0-1)}>2Q,
\end{equation}
and that \eqref{eq:block-slack} holds.  

We first verify the residue-realization assumption in Lemma~\ref{lem:finite-QW-rotation}.  Since $U$ is an  integer interval, 
\[
        I_U = \{2^{t_0-1}+1,\ldots,2^{t_1}\}.
\]
Let $\mathbf r:I_U \to \Z/Q\Z$ be arbitrary.  If $n,n+1\in I_U$, we have $n>2^{t_0-1}\ge n_*$, and hence
\[
        \frac{a_{n+1}}{a_n}\ge n^\eta> 2^{\eta(t_0-1)} > 2Q.
\]
Thus, applying Lemma~\ref{lem:lacunary-residue-nesting} to the finite sequence $(a_n)_{n\in I_U}$ , we obtain a realization of the prescribed
residues.

We next check the weight assumptions.  For $t\in U$, it is clear that $N\ge 2\Lambda_q(2^{t_1})  \ge 2\Lambda_q(2^t)$, and hence
\[ 
\frac1N<w_t.
\]
Moreover, \eqref{eq:block-slack} implies 
\[
        Q \le\bigl(2\Lambda_q(2^{t_1})+1\bigr)M  \le  2^{t_0-4}.
\]
Since $N\le Q$ and $t\ge t_0$, we have 
\[
        \frac{2^t}{4N} \ge \frac{2^{t_0}}{4\cdot2^{t_0-4}}  = 4.
\]
On the other hand, $\Lambda_q(2^t)\ge 1$, so $w_t\le 1$.  Hence
\[
        w_t<\frac{2^t}{4N}, \quad t\in U.
\]
Finally, by the definition of $t_1$, we get 
\[
        \sum_{t\in U}w_t>2M.
\]
Therefore, all the assumptions of Lemma~\ref{lem:finite-QW-rotation} are satisfied for every sufficiently large $M$.

We now estimate the prospective Orlicz cost.  By the minimality of $t_1$, we have
\[
        \sum_{t=t_0}^{t_1-1} \frac1{\Lambda_q(2^t)} \le 2M.
\]
Since $\Lambda_q(2^{t_1})\ge 1$, Lemma~\ref{lem:iterated-sum} and the assumption  $M\le L_q(t_0)\le 2M$ given by the choice of $t_0$,   yields
\[
\begin{aligned}
        1+L_q(t_1)
        &\lesssim_q 1+\sum_{t=2}^{t_1}\frac1{\Lambda_q(2^t)}    \\
        &\le 1+\sum_{t=2}^{t_0-1}\frac1{\Lambda_q(2^t)}+2M+\frac1{\Lambda_q(2^{t_1})}    \\
        &\lesssim_q 1+L_q(t_0)+M \lesssim_q M.
\end{aligned}
\]
Furthermore, we have 
\[
        N \le 2\Lambda_q(2^{t_1})+1 \lesssim_q(1+t_1)^q,
\]
which follows from the fact $L_\ell(2^{t_1})\le L_1(2^{t_1})\lesssim1+t_1$ for $1\le\ell\le q$.
Put $s_{j,M}=2^{j+3}jN$. Then $s_{j,M}\lesssim_{j,q}(1+t_1)^q$. By Lemma~\ref{lem:Psi-estimates}(c) and Lemma~\ref{lem:log-calc}\ref{lem:log-calc:3}, we obtain
\[
        1+L_q(s_{j,M}) \lesssim_{j,q} 1+L_q(\mathrm e+t_1)\lesssim_q 1+L_q(t_1) \lesssim_q M.
\]
Consequently,
\[
        2^{-j}\frac{2}{NM}\Phi(s_{j,M})=\frac{16j}{M}\, \rho_\Phi(s_{j,M}) \bigl(1+L_q(s_{j,M})\bigr) \lesssim_{j,q}\rho_\Phi(s_{j,M}).
\]
since $t_1(M)\ge t_0(M)\to \infty$ and $\Lambda_q(2^{t_1(M)})\to \infty$, it follows that $N(M)=\left\lceil2\Lambda_q(2^{t_1})\right\rceil\to \infty$, and hence for the fixed index $j$, $s_{j,M}\to \infty$. Consequently, 
\[
        2^{-j}\frac{2}{NM}\Phi(s_{j,M}) \to 0  \quad (M\to \infty).
\]

Choose $M_j$ so large that \eqref{eq:poly-stage-large-M}, all the weight
and residue requirements above, and \eqref{eq:block-slack} hold, and also
\[
        2^{-j}\frac{2}{N_jM_j}   \Phi(2^{j+3}jN_j)   \le\varepsilon_j,
\]
where $t_{0,j}=t_0(M_j)$, $t_{1,j}=t_1(M_j)$, $N_j=\left\lceil2\Lambda_q(2^{t_{1,j}})\right\rceil$, $U_j=\{t_{0,j},\ldots,t_{1,j}\}$ and $Q_j=N_jM_j$.
Apply Lemma~\ref{lem:finite-QW-rotation} with
\[
        w_{j,t}=\frac1{\Lambda_q(2^t)},\quad t\in U_j.
\]
Let $\alpha_j>0$ be supplied by Lemma~\ref{lem:finite-QW-rotation} and set
\[
        T_jx=x+\frac{\alpha_j}{Q_j}\pmod1,   \quad    E_j=[0,2/Q_j),
\]
and
\[
        H_j=8jN_j, \quad A_j=j, \quad\mathcal K_j=\{2^t:t\in U_j\}.
\]
Moreover, Lemma~\ref{lem:finite-QW-rotation} shows
\[
        \max_{t\in U_j} \frac{8N_j}{2^t\Lambda_q(2^t)} \sum_{n=1}^{2^t} \1_{E_j}(T_j^{a_n}x)>1 \quad \text{ for every } \ x\in\mathbb T.
\]
Equivalently we have
\[
        \max_{K\in\mathcal K_j} \frac{H_j}{K\Lambda_q(K)}\sum_{n=1}^{K} \1_{E_j}(T_j^{a_n}x)>A_j\quad \text{ for every } \ x\in\mathbb T.
\]

Define $\mathfrak s_j= \bigl(  \mathbb T,\B_{\mathbb T},\lambda, T_j,E_j,H_j,A_j,\mathcal K_j \bigr)$. 
Since $T_j$ is an invertible Lebesgue measure-preserving rotation, $\mathfrak s_j$ is a finite stage.  Furthermore,
\[
        A_j=j\to \infty, \ \text{ and } \  \min\mathcal K_j =  2^{t_{0,j}} \ge 2^j \to \infty.
\]
Thus \eqref{eq:finite-stage-escape} holds.  The preceding pointwise estimate gives $\mathcal G(\mathfrak s_j)=\mathbb T$, 
proving \eqref{eq:finite-stage-success} with $c_0=1$.
Finally, since $E_j=[0,2/Q_j)$, $Q_j=N_jM_j$, and $H_j=8jN_j$, the choice of $M_j$ yields
\[
        \sum_{j=1}^{\infty} 2^{-j}\lambda(E_j)\Phi(2^jH_j) = \sum_{j=1}^{\infty} 2^{-j}\frac{2}{N_jM_j}  \Phi(2^{j+3}jN_j)
        \le  \sum_{j=1}^{\infty}\varepsilon_j <\infty,
\]
proving \eqref{eq:finite-stage-summability}. Hence $(\mathfrak s_j)_{j\ge 1}$ is an $(\mathbf a,q,\Phi)$-admissible sequence of finite stages.
\end{proof}

\subsection{Exponential blocks and finite amplification}\label{subsec:exponential-blocks}

We verify the finite-stage assumptions for the sequence $\mathbf a=(k^n)_{n\ge 1}$, where $k\ge 2$.  The proof of
\cite[Theorem~6.1]{QuasWierdl} uses primitive cyclic classes of base-$k$ words and, at the final step, compares the scales $y$ and
$2y$.  We use these two ideas in a finite quantitative form.  More precisely, the construction below gives a testing set of controlled measure
and a lower bound for the measure of the associated large-value set at finitely many averaging lengths.  A finite product then turns this lower bound into a fixed positive constant.  We include the details so that the verification of \eqref{eq:finite-stage-escape}--\eqref{eq:finite-stage-summability}
is self-contained and uses the normalization required here.

For $y\in\N$, set
\[
        n_0(y)=\left\lfloor2y\log_k2\right\rfloor.
\]
Fix $D_k\ge 2^{12}$.  If $w=(w_t)_{t\ge 2}$ is a positive sequence with
$w_t\to 0$, define
\[
        m_y(w)  = \sum_{\substack{t\ge 2\\2^{-y}<w_t<2^{t-y}}}w_t
\]
and
\[
        l_y(w)= \sum_{\substack{t\ge 2\\ D_kn_0(y)2^{-y}<w_t<2^{t-y}}}w_t.
\]

The next lemma is a modified form of the $y$-versus-$2y$ argument used at the end of the proof of \cite[Theorem~6.1]{QuasWierdl}.  
The underlying scale-splitting idea is the same, but the estimate needed here has a different normalization: in particular, the additional lower cutoff
$D_kn_0(y)2^{-y}$ is imposed by the finite repeated-word construction below.  We therefore present the required form separately.

\begin{lem}\label{lem:QW-mass-split}
There is $C_k>0$ such that, for every positive sequence $w_t\to 0$ and every sufficiently large $y$,
\[
        l_y(w)+l_{2y}(w) \ge m_y(w)-C_ky^2\,2^{-y}.
\]
\end{lem}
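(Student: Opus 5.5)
The plan is a direct term-by-term comparison: split the index set defining $m_y(w)$ into three parts. The first part is absorbed by $l_y(w)$, the second by $l_{2y}(w)$, and the third consists of $O(y)$ indices, each carrying weight $O(y2^{-y})$. Since all $w_t>0$, it suffices to show that every index $t$ contributing to $m_y(w)$ either contributes to $l_y(w)$, or contributes to $l_{2y}(w)$, or lies in an exceptional set whose total weight is at most $C_ky^22^{-y}$. Overlaps between $l_y$ and $l_{2y}$ do no harm, because we only need an upper bound for $m_y$ by a sum of nonnegative quantities.

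Fix $t\ge2$ with $2^{-y}<w_t<2^{t-y}$. If $w_t>D_kn_0(y)2^{-y}$, then $t$ contributes to $l_y(w)$, since the upper constraint $w_t<2^{t-y}$ is the same in both sums. Otherwise
\[
2^{-y}<w_t\le D_kn_0(y)2^{-y}.
\]
In this case we test membership in $l_{2y}(w)$, which requires
\[
D_kn_0(2y)2^{-2y}<w_t<2^{t-2y}.
\]
The lower condition follows from $w_t>2^{-y}$ as soon as $D_kn_0(2y)<2^{y}$. This holds for all sufficiently large $y$, because $n_0(2y)\le 4y\log_k2$ grows only linearly; this is where "sufficiently large $y$" is used.

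For the upper condition, since $w_t\le D_kn_0(y)2^{-y}$, it suffices that $D_kn_0(y)2^{-y}<2^{t-2y}$, that is,
\[
t>\tau_y:=y+\log_2\bigl(D_kn_0(y)\bigr).
\]
Hence every index $t>\tau_y$ in the second case contributes to $l_{2y}(w)$.

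The exceptional indices are those with $2\le t\le\tau_y$ and $w_t\le D_kn_0(y)2^{-y}$. Their number is at most $\tau_y\le y+\log_2(2D_ky\log_k2)\lesssim_k y$, and each has weight at most $D_kn_0(y)2^{-y}\le 2D_k(\log_k2)\,y2^{-y}$. Their total weight is therefore at most $C_ky^22^{-y}$ with $C_k$ depending only on $k$ (through $D_k$). Summing the three cases gives
\[
m_y(w)\le l_y(w)+l_{2y}(w)+C_ky^22^{-y}.
\]

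There is no real obstacle here. The only care needed is in the bookkeeping of thresholds: choose the cutoff $\tau_y$ so that it simultaneously guarantees the upper constraint of $l_{2y}$ and remains $O(y)$, and check that the lower cutoff $D_kn_0(2y)2^{-2y}$ is eventually below $2^{-y}$. The hypothesis $w_t\to0$ is not needed for this inequality beyond ensuring that the sums are meaningful; if some sum diverged, the inequality would hold trivially or would be interpreted in $[0,\infty]$.
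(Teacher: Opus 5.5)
Your proposal is correct and follows essentially the same route as the paper. The paper argues contrapositively: it defines the exceptional set $\mathcal R_y$ of indices counted in $m_y(w)$ but in neither $l_y(w)$ nor $l_{2y}(w)$, and shows these indices satisfy $t\le y+\log_2(D_kn_0(y))$ and $w_t\le D_kn_0(y)2^{-y}$. That is the same threshold $\tau_y$, the same use of the eventual inequality $D_kn_0(2y)2^{-2y}<2^{-y}$, and the same $O_k(y)\cdot O_k(y2^{-y})$ count that you give.
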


\begin{proof}
Since $n_0(2y)=O_k(y)$, we may choose $y_0=y_0(k)$ so large that
\[
        D_kn_0(2y)2^{-2y}<2^{-y}
\]
whenever $y\ge y_0$.  Fix such a $y$, and let $\mathcal R_y$ denote the set of indices whose terms are counted in $m_y(w)$ but in neither $l_y(w)$ nor $l_{2y}(w)$.

If $t\in\mathcal R_y$, then $2^{-y}<w_t<2^{t-y}$. Since the upper condition in the definition of $l_y(w)$ is already satisfied, the fact that $w_t$ is not counted in $l_y(w)$ yields
\[
        w_t\le D_kn_0(y)2^{-y}.
\]
On the other hand, $w_t>2^{-y}>D_kn_0(2y)2^{-2y}$, so $w_t$ satisfies the lower condition in the definition of $l_{2y}(w)$.  Since $w_t$  is not counted in $l_{2y}(w)$,
the upper condition there must fail, and hence $w_t\ge 2^{t-2y}$. Therefore
\[
        \mathcal R_y  \subset\left\{  t\ge 2: 2^{t-2y}\le D_kn_0(y)2^{-y} \right\}.
\]
Every index in the set on the right  satisfies
\[
        t\le y+\log_2\!\bigl(D_kn_0(y)\bigr).
\]
Since $n_0(y)=O_k(y)$, it follows that $\#\mathcal R_y=O_k(y)$. By the definition of $\mathcal R_y$ and the positivity of the terms,
\[
        m_y(w)\le l_y(w)+l_{2y}(w)  +\sum_{t\in\mathcal R_y}w_t.
\]
Moreover, $w_t\le D_kn_0(y)2^{-y}$ for every $t\in\mathcal R_y$.  Consequently, we have 
\[
        \sum_{t\in\mathcal R_y}w_t \le D_kn_0(y)2^{-y}\,\#\mathcal R_y  \le C_ky^2\,2^{-y}.
\]
The desired estimate follows.
\end{proof}

A word of length $m$ over $\{0,\ldots,k-1\}$ is called \emph{primitive} if its orbit under cyclic permutation has cardinality $m$. 
A primitive cyclic class, also called a primitive necklace, is such an orbit. We use the standard terminology from combinatorics on words, see \cite[Chapter~1]{Lothaire}.

The cyclic-word technique in the following lemma is adapted from the proof of \cite[Theorem~6.1]{QuasWierdl}.  In contrast with that argument, we use ordinary base-$k$ cylinders, confine the digit prescription to finitely many dyadic blocks, and present separate quantitative bounds for the testing set and the associated large-value set.

\begin{lem}\label{lem:exp-small-block}
Let $k\ge 2$, and assume that $D_k\ge 2^{12}$.  There are constants $c_k,C_k>0$ such that the following holds.
Let $y$ be sufficiently large,  $U\subset\{2,3,\ldots\}$ be finite and nonempty, and $(w_t)_{t\in U}$ satisfy
\[
        D_kn_0(y)2^{-y}<w_t<2^{t-y}, \quad t\in U.
\]
Then there exist $\alpha\in[0,1)$ and a Borel set $F_y\subset\mathbb T$ such that
\[
        \lambda(F_y)\le C_k2^{-2y},
\]
and, for the rotation 
\[
        T_\alpha x=x-\alpha\pmod1,
\]
the set
\[
        G_y(\alpha)=\left\{x\in\mathbb T: \max_{t\in U}\frac{2^yw_t}{2^t} \sum_{n=1}^{2^t} \1_{F_y}(T_\alpha^{k^n}x)>1 \right\}
\]
satisfies
\[
        \lambda\bigl(G_y(\alpha)\bigr)\ge  c_k\min\left\{ 1,\, 2^{-y}\sum_{t\in U}w_t \right\}.
\]
\end{lem}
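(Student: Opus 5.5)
The plan is to exploit base-$k$ digit shifting: for the rotation $T_\alpha x=x-\alpha$ we have $T_\alpha^{k^n}x=x-k^n\alpha \pmod 1$, and $k^n\alpha\pmod 1$ is the base-$k$ expansion of $\alpha$ shifted left by $n$ digits. Put $n_0=n_0(y)$, so that $k^{-n_0}\asymp_k 2^{-2y}$. Take
\[
F_y=\bigl[-2k^{-n_0},\,2k^{-n_0}\bigr)\pmod 1 .
\]
Then $\lambda(F_y)\le C_k2^{-2y}$. Moreover, if the base-$k$ digits of $\alpha$ in positions $n+1,\ldots,n+n_0$ coincide with the first $n_0$ digits of $x$, then $|x-k^n\alpha|<2k^{-n_0}$ modulo $1$. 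Taking a fattened neighbourhood of $0$ rather than a single cylinder absorbs the tail digits of both $\alpha$ and $x$, so no carry analysis is needed. Hence $\sum_{n\le 2^t}\mathbf 1_{F_y}(T_\alpha^{k^n}x)$ is at least the number of windows of length $n_0$, starting in positions $n\le 2^t$, at which the digit string of $\alpha$ reads the length-$n_0$ prefix of $x$.

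Next I will build the digits of $\alpha$ blockwise on $D_t=\{2^{t-1}+1,\ldots,2^t\}$ for $t\in U$; all other digits are set to $0$. For $t\in U$ let $r_t=\lceil 2\cdot 2^{t-y}/w_t\rceil$. This is at least $2$ because $w_t<2^{t-y}$. Fill $D_t$ with concatenations $W^{r_t+1}$ of primitive words $W$ of length $n_0$. Every cyclic rotation of $W$ then occurs as a length-$n_0$ window at least $r_t$ times inside $D_t\subset[1,2^t]$. The number of words that fit is
\[
p_t=\Bigl\lfloor \frac{2^{t-1}}{(r_t+1)n_0}\Bigr\rfloor\gtrsim \frac{2^y w_t}{n_0}.
\]
Here the lower cutoff $w_t>D_kn_0 2^{-y}$ with $D_k\ge 2^{12}$ ensures $p_t\ge 1$, and also that the floor and the extra repetition cost at most a constant factor.

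Now fix $x$ whose length-$n_0$ prefix is a cyclic rotation of a word $W$ used in block $t$. For this $x$,
\[
\frac{2^yw_t}{2^t}\sum_{n\le 2^t}\mathbf 1_{F_y}(T_\alpha^{k^n}x)\ge \frac{2^yw_t}{2^t}\,r_t\ge 2>1 .
\]
Consequently $G_y(\alpha)$ contains the union of the base-$k$ cylinders of length $n_0$ labelled by rotations of the words used. I will use distinct primitive necklaces across all the words and all the blocks. Then each word contributes $n_0$ distinct cylinders of measure $k^{-n_0}$, and the covered measure is
\[
n_0k^{-n_0}\sum_{t\in U}p_t\gtrsim_k 2^{-2y}\,2^y\sum_{t\in U}w_t=2^{-y}\sum_{t\in U}w_t .
\]

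The remaining issue is the supply of necklaces. By the standard count (Lothaire), the number of primitive necklaces of length $n_0$ is at least $c\,k^{n_0}/n_0\asymp_k 2^{2y}/n_0$. If $\sum_t p_t$ exceeds this, I will truncate, using only as many words as there are necklaces, processing the blocks $t\in U$ in increasing order. The covered measure is then $\gtrsim_k 1$. This gives the bound $c_k\min\{1,2^{-y}\sum_{t\in U}w_t\}$.

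I expect the main obstacle to be the bookkeeping at the block and word boundaries. Windows straddling two different words, or running into the zero padding, produce spurious matches; these are harmless, since they only increase the counts. The loss of a boundary window per word is already covered by the extra repetition. Care is still needed that $D_t$ really accommodates $(r_t+1)n_0$ digits, which is where the lower cutoff on $w_t$ and $y$ sufficiently large enter.
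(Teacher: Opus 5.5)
Your proposal is correct and follows essentially the same route as the paper. It uses the same $n_0(y)$, the same repetition count $r_t=\lceil 2^{t-y+1}/w_t\rceil$ with segments $W^{r_t+1}$ built from distinct primitive necklaces placed in the dyadic digit blocks $D_t$, and the same dichotomy: either the primitive classes are exhausted (measure $\gtrsim 1$) or they are not (measure $\gtrsim 2^{-y}\sum_{t\in U}w_t$).

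The differences are only cosmetic. The paper uses the narrower set $F_y=\{\|z\|_{\mathbb T}<k^{-n_0}\}$, which already suffices because $\{k^n\alpha\}$ and $x$ lie in the same cylinder $I_W$ when $\alpha$ has a terminating expansion. It also bounds the primitive mass directly, by counting nonprimitive words, instead of citing the necklace count.
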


\begin{proof}
Write $ n_0=n_0(y)$. By the definition of $n_0$, we have 
\[
        2^{-2y}\le k^{-n_0}< k2^{-2y}.
\]

For a word $W=w_1\cdots w_{n_0} \in\{0,\ldots,k-1\}^{n_0}$, set
\[
        I_W =\left[ \sum_{j=1}^{n_0}w_jk^{-j},\, \sum_{j=1}^{n_0}w_jk^{-j}+k^{-n_0} \right) \subset \mathbb T.
\]
Put
\[
        F_y = \left\{z\in\mathbb T: \|z\|_{\mathbb T}<k^{-n_0} \right\}, 
\]
where $\|z\|_{\mathbb T} = \min_{m\in\Z}|z-m|$. For all sufficiently large $y$, we have
\[
        \lambda(F_y)=2k^{-n_0}\le 2k 2^{-2y}.
\]
For $\alpha\in [0,1)$, if the canonical base-$k$ expansion of $\alpha$ has the word $W$ in positions $n+1,\ldots,n+n_0$, then $\{k^n\alpha\}\in I_W$.
Consequently, for every $x\in I_W$, 
\begin{equation}\label{eq:exp-cylinder-hit}
        x-k^n\alpha\in F_y\pmod1.
\end{equation}

Let $\mathcal W_{n_0}^{\mathrm{prim}}$ denote the set of primitive words of length $n_0$. If $W$ is nonprimitive, then its least cyclic
period is a proper divisor $d$ of $n_0$. In particular, $d\le n_0/2$, and $W$ is determined by its first $d$ symbols. Consequently,
\[
\#\left(\{0,\ldots,k-1\}^{n_0}\setminus\mathcal W_{n_0}^{\mathrm{prim}}\right)\le \sum_{\substack{d\mid n_0\\ d<n_0}} k^d 
\le\sum_{d=1}^{\lfloor n_0/2\rfloor}k^d\le \frac{k}{k-1} k^{n_0/2}.
\]
Hence
\[
\lambda\left(\bigcup_{W\in\mathcal W_{n_0}^{\mathrm{prim}}}I_W \right) = k^{-n_0}\#\mathcal W_{n_0}^{\mathrm{prim}}
\ge 1-\frac{k}{k-1} k^{-n_0/2}.
\]
Since $n_0=n_0(y)\to \infty$ as $y\to \infty$, it follows that for all sufficiently large $y$,
\begin{equation}\label{eq:exp-primitive-mass}
\lambda\left( \bigcup_{W\in\mathcal W_{n_0}^{\mathrm{prim}}}I_W \right) \ge \frac12.
\end{equation}

For $t\in U$, set
\[
        \xi_t= \frac{2^{t-y+1}}{w_t}, \quad  R_t=\lceil\xi_t\rceil, \quad   L_t=n_0(R_t+1).
\]
Since $w_t<2^{t-y}$, we have $\xi_t>2$, and hence
\[
        R_t<\xi_t+1<\frac32\xi_t, \quad     R_t+1<2\xi_t.
\]
Using the lower bound on $w_t$, we obtain
\[
        L_t<2n_0\xi_t = \frac{4n_0\,2^{t-y}}{w_t} <   \frac4{D_k}\,2^t < 2^{t-4}.
\]

Define $ Q_t =\left\lfloor \frac{2^{t-1}}{L_t}\right\rfloor$. Since the quantity inside the floor is greater than $8$, we have
\[
        Q_t \ge \frac{2^{t-2}}{L_t} >  \frac{2^{t-3}}{n_0\xi_t} =\frac{2^{y-4}w_t}{n_0}.
\]
Thus, together with $k^{-n_0}\ge 2^{-2y}$, it follows that 
\begin{equation}\label{eq:exp-new-class-mass}
        Q_tn_0k^{-n_0} > 2^{-y-4}w_t.
\end{equation}

Process the elements of $U$ in increasing order. For distinct $t$, use the disjoint digit blocks $D_t= \{2^{t-1}+1,\ldots,2^t\}$. 
At stage $t$, partition an initial subblock of $D_t$ into $Q_t$ consecutive segments of length $L_t$.  This is possible because $Q_tL_t\le2^{t-1}=|D_t|$.
If at least $Q_t$ primitive cyclic classes remain unused, assign distinct unused classes to these segments.  
Otherwise, assign all remaining classes and stop the construction.  For a class assigned to a segment, choose a representative $W$ and fill that segment with
$R_t+1$ consecutive copies of $W$. Suppose that such a segment occupies the digit positions $s+1,\ldots,s+L_t$. 
Let $\sigma^uW$ denote the cyclic shift of $W$ by $u$ places, $0\le u<n_0$.  For
\[
        n=s+u+\ell n_0, \quad   0\le\ell<R_t,
\]
the digits in positions $n+1,\ldots,n+n_0$ form the word $\sigma^u W$. Note that 
\[
\begin{aligned}
        n+n_0 &\le  s+(n_0-1)+(R_t-1)n_0+n_0  \\
        &=  s+(R_t+1)n_0-1<  s+L_t.
\end{aligned}
\]
Thus the entire digit window $\sigma^uW$ lies in the prescribed segment.  Moreover,
since the segment is contained in $D_t$, we have 
\[
        s\ge 2^{t-1},   \quad   n+n_0\le2^t,
\]
and in particular $1\le n<2^t$.

Set every digit not prescribed above equal to zero, and let
\[
        \alpha = \sum_{j=1}^{\infty}a_jk^{-j}
\]
be the number represented by the resulting base-$k$ expansion.  Since only finitely many digits are prescribed, the resulting expansion is eventually zero and is therefore the canonical base-$k$ expansion of $\alpha$.
By \eqref{eq:exp-cylinder-hit}, for every $x\in I_{\sigma^uW}$ and every $0\le\ell<R_t$, $\1_{F_y}(x-k^n\alpha)=1$.  The corresponding $R_t$ integers $n$ are distinct and belong to $\{1,\ldots,2^t\}$.  Hence
\[
        \sum_{n=1}^{2^t}  \1_{F_y}(x-k^n\alpha) \ge R_t.
\]
Since $R_t\ge\xi_t$,
\[
        \frac{2^yw_t}{2^t}\sum_{n=1}^{2^t} \1_{F_y}(x-k^n\alpha)\ge \frac{2^yw_tR_t}{2^t} \ge 2>1.
\]
Thus every cylinder belonging to a selected primitive cyclic class is contained in $G_y(\alpha)$.

If the primitive classes are exhausted, by \eqref{eq:exp-primitive-mass} we have
\[
        \lambda\bigl(G_y(\alpha)\bigr)
        \ge\frac12.
\]
Otherwise, exactly $Q_t$ new classes are selected at every stage
$t\in U$.  Since distinct primitive classes consist of disjoint collections of
words, the corresponding cylinders are pairwise disjoint. Then by \eqref{eq:exp-new-class-mass} we have
\[
        \lambda\bigl(G_y(\alpha)\bigr)\ge \sum_{t\in U}Q_tn_0k^{-n_0}  >  2^{-y-4}\sum_{t\in U}w_t.
\]
Therefore, the assertion holds with
\[
        C_k=2k, \quad c_k=2^{-4}.
\]
In fact, the latter constant is independent of $k$.
\end{proof}

\begin{lem}\label{lem:exp-weight-mass}
Fix $q\in\N$. Let $w=(w_t)_{t\ge 2}$ satisfy $w_t=\frac{1}{\Lambda_q(2^t)}$.
Then there exists a strictly increasing sequence of integers $y_j\to \infty$ such that
\[
        l_{y_j}(w)   \gtrsim_{k,q} 1+L_q(2^{y_j}).
\]
Moreover, define 
\[
       U_y(w)= \left\{ t\ge 2: D_k n_0(y)2^{-y}<w_t<2^{t-y} \right\} \ \text{ and } \  l_y(w)=\sum_{t\in U_y(w)}w_t.
\]
It holds that  $ \min U_{y_j}(w)\to  \infty$.
\end{lem}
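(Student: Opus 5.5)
The plan is to combine the lower bound for $m_y(w)$ with the mass-splitting Lemma~\ref{lem:QW-mass-split}. First I estimate $m_y(w)$ for the weights $w_t=1/\Lambda_q(2^t)$. By Lemma~\ref{lem:iterated-sum}, $w_t\asymp_q\omega_q(t)=1/(t\Lambda_{q-1}(t))$, so $w_t\to 0$.

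The condition $w_t<2^{t-y}$ holds for every $t\ge y$, since $w_t\le 1\le 2^{t-y}$. The condition $w_t>2^{-y}$ holds as long as $\Lambda_q(2^t)<2^y$. Because $\Lambda_q(2^t)\lesssim_q(1+t)^q$, it holds for all $t\le T_y:=\lfloor 2^{y/(2q)}\rfloor$ once $y$ is large. Hence
\[
m_y(w)\ge\sum_{y\le t\le T_y}w_t\gtrsim_q \bigl(1+L_q(T_y)\bigr)-C_q\bigl(1+L_q(y)\bigr),
\]
again by Lemma~\ref{lem:iterated-sum}. Since $L_q(T_y)\asymp_q L_q(2^y)$ (use Lemma~\ref{lem:Psi-estimates}\ref{lem:Psi-estimates:3}), and $L_q(y)=L_q(\log_2 2^y)$ is of strictly smaller order than $L_q(2^y)$ (one fewer logarithm), we get $m_y(w)\gtrsim_{q} 1+L_q(2^y)$ for all large $y$. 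For $q=1$ this reads $\log y$ versus $\log\log y$, which is fine. In general $L_q(2^y)/L_q(y)\to\infty$.

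Next apply Lemma~\ref{lem:QW-mass-split}:
\[
l_y(w)+l_{2y}(w)\ge m_y(w)-C_ky^22^{-y}.
\]
The error term is negligible, so for each large $y$ at least one of $l_y(w)$, $l_{2y}(w)$ is $\gtrsim_{k,q}1+L_q(2^y)$. Since $1+L_q(2^{2y})\asymp_q 1+L_q(2^y)$, in either case we obtain some $y'\in\{y,2y\}$ with $l_{y'}(w)\gtrsim_{k,q}1+L_q(2^{y'})$. Running $y$ through a rapidly increasing sequence (e.g. $y_{(i)}=4^i$, so the candidate values $\{y,2y\}$ from different $i$ never coincide) and picking $y'$ each time gives the strictly increasing sequence $y_j\to\infty$.

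It remains to show $\min U_{y_j}(w)\to\infty$. If $t\in U_y(w)$, then $w_t<2^{t-y}$. Since $w_t\ge 1/\Lambda_q(2^t)\gtrsim_q(1+t)^{-q}$, this forces $2^{y-t}\lesssim_q(1+t)^q$, so $t\ge y-O_q(\log y)$, which tends to infinity with $y$.

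The main obstacle is the comparison step $m_y\gtrsim L_q(2^y)$. One must check that discarding the indices $t<y$ (where the upper cutoff $2^{t-y}$ fails) costs only $O(1+L_q(y))$, and that this is genuinely smaller than $L_q(2^y)$ with constants uniform enough to survive the constant in Lemma~\ref{lem:iterated-sum}. If the lower constant $c_q$ is too weak against $C_q(1+L_q(y))$, I would instead sum only over $t\in[y^2,T_y]$, where $L_q(T_y)-L_q(y^2)\gtrsim L_q(2^y)$ directly from the integral comparison in Lemma~\ref{lem:log-calc}\ref{lem:log-calc:1}.
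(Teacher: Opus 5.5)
Your proof is correct and follows the paper's route. You bound $m_y(w)\gtrsim_q 1+L_q(2^y)$ from below using a window of indices where both cutoffs hold (you take $[y,2^{y/(2q)}]$, the paper takes $[3y,2^y/y^2]$), then apply Lemma~\ref{lem:QW-mass-split} and choose between $y$ and $2y$ along a geometric sequence; your bound $t\ge y-O_q(\log y)$ replaces the paper's monotonicity argument for $\min U_{y_j}(w)\to\infty$. The worry in your last paragraph is unfounded, since $L_q(y)/L_q(2^y)\to 0$ absorbs any constants, and for $q=1$ the comparison is $y$ versus $\log y$, not $\log y$ versus $\log\log y$ (that is the $q=2$ case).
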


\begin{proof}
By Lemma~\ref{lem:iterated-sum}, we have  $ w_t \asymp_q \frac{1}{t\Lambda_{q-1}(t)}$. For sufficiently large $y$, set $b_y=\left\lfloor\frac{2^y}{y^2}\right\rfloor$.
We first observe that
\[
        2^{-y}<w_t<2^{t-y}, \quad 3y\le t\le b_y.
\]
Indeed, since $\Lambda_q\ge 1$, we have $w_t\le1$, and hence
\[
        w_t\le1<2^{t-y}
\]
whenever $t\ge3y$. For the lower bound, note that
\[
        b_y\Lambda_{q-1}(b_y)=o_q(2^y).
\]
For $q=1$, this follows immediately from $b_y\le2^y/y^2$.  For $q\ge 2$,
\[
        \Lambda_{q-1}(b_y)\asymp_q  y\Lambda_{q-2}(y)   = y^{1+o_q(1)},
\]
and therefore
\[
        b_y\Lambda_{q-1}(b_y) \le 2^y y^{-1+o_q(1)} = o_q(2^y).
\]
Since $t\mapsto t\Lambda_{q-1}(t)$ is  increasing, we have 
\[
        \inf_{3y\le t\le b_y}2^yw_t \gtrsim_q \frac{2^y} {b_y\Lambda_{q-1}(b_y)} \to  \infty.
\]
Thus $w_t>2^{-y}$ throughout this range  $3y\le t\le b_y$ for all sufficiently large $y$.

Consequently, together with Lemma~\ref{lem:iterated-sum}, it follows that 
\[
        m_y(w) \ge\sum_{3y\le t\le b_y}w_t   \gtrsim_q \sum_{3y\le t\le b_y} \frac{1}{t\Lambda_{q-1}(t)}  \gtrsim_q  L_q(b_y)-L_q(3y).
\]
If $q=1$, then
\[
        L_1(b_y)-L_1(3y)= y\log2-3\log y+O(1).
\]
If $q\ge 2$, then
\[
        L_q(b_y)= L_q(2^y)+O_q(1), \quad L_q(3y) = o_q\bigl(L_q(2^y)\bigr).
\]
This implies that
\[
        m_y(w)\gtrsim_q 1+L_q(2^y)
\]
for all sufficiently large $y$.

Since $y^2 2^{-y}= o\bigl(1+L_q(2^y)\bigr)$, Together with Lemma~\ref{lem:QW-mass-split}, we have 
\[
        l_y(w)+l_{2y}(w)\gtrsim_{k,q}  1+L_q(2^y).
\]
Moreover, note that
\[
        1+L_q(2^{2y})\asymp_q 1+L_q(2^y).
\]
Hence, for every sufficiently large $y$, there exists
$z_y\in\{y,2y\}$ such that
\[
        l_{z_y}(w)\gtrsim_{k,q} 1+L_q(2^{z_y}).
\]
Taking $y_j=z_{3^j}$ for all sufficiently large $j$, we obtain a strictly increasing sequence satisfying the required estimate.

It remains to prove $ \min U_{y_j}(w)\to  \infty$.  Fix $T\ge 2$. Since $w_t=\Lambda_q(2^t)^{-1}$ is decreasing,  $w_t\ge w_T>0$ for $2\le t\le T$.
As $y_j\to \infty$, we have $2^{T-y_j}<w_T$ for all sufficiently large $j$.  Thus, for every $2\le t\le T$,
\[
        2^{t-y_j} \le 2^{T-y_j}<w_T\le w_t,
\]
so the upper inequality in the definition of $U_{y_j}(w)$ fails.  Therefore
\[
        U_{y_j}(w)\cap\{2,\ldots,T\} =\emptyset
\]
for all sufficiently large $j$. By the preceding lower bound, $l_{y_j}(w)>0$. Since
\[
        l_{y_j}(w)= \sum_{t\in U_{y_j}(w)}w_t
\]
and all the summands are positive, the set $U_{y_j}(w)$ is non-empty.  Since $T$ was arbitrary,  $\min U_{y_j}(w)\to \infty$.
\end{proof}

\begin{lem}\label{lem:independent-amplification}
Fix $q\in\N$, a sequence $\mathbf a=(a_n)$, a number $h>0$, a finite nonempty set $\mathcal K\subset \N$, and $0<p\le 1$.
For $1\le\rho\le R$, let $ (\Omega_\rho,\B_\rho,\nu_\rho,S_\rho)$ be measure-preserving systems and $E_\rho\in\B_\rho$.  Suppose that
\[
        \nu_\rho\left\{ \omega_\rho\in \Omega_\rho: \max_{K\in\mathcal K} \frac{h}{K\Lambda_q(K)} \sum_{n=1}^K \1_{E_\rho}(S_\rho^{a_n}\omega_\rho)>1 \right\} \ge p
\]
for every $\1\le \rho\le R$.
Let $ (\Omega^{(R)},\nu^{(R)},S^{(R)}) =  \prod_{\rho=1}^R(\Omega_\rho,\nu_\rho,S_\rho)$ and $\pi_\rho:\Omega^{(R)}\to\Omega_\rho$ be the coordinate maps. Set
$E^{(R)} =\bigcup_{\rho=1}^R\pi_\rho^{-1}(E_\rho)$. 
Then
\[
        \nu^{(R)}(E^{(R)})   \le \sum_{\rho=1}^R\nu_\rho(E_\rho),
\]
and
\[
        \nu^{(R)}\left\{\omega\in \Omega^{(R)}:\max_{K\in\mathcal K}\frac{h}{K\Lambda_q(K)}\sum_{n=1}^K \1_{E^{(R)}}((S^{(R)})^{a_n}\omega)>1 \right\} \ge 1-(1-p)^R.
\]
In particular, if $R=\lceil p^{-1}\rceil$, the last quantity is at least $1-\mathrm e^{-1}$.
\end{lem}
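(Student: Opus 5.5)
The plan is to prove the two assertions separately, using only the product structure and monotonicity. For the measure bound, $E^{(R)}$ is a union of the cylinder sets $\pi_\rho^{-1}(E_\rho)$, and the product measure gives $\nu^{(R)}(\pi_\rho^{-1}(E_\rho))=\nu_\rho(E_\rho)$. Subadditivity then yields $\nu^{(R)}(E^{(R)})\le\sum_\rho\nu_\rho(E_\rho)$ at once.

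For the lower bound on the large-value set, first record the coordinatewise identity $\pi_\rho\circ (S^{(R)})^{a_n}=S_\rho^{a_n}\circ\pi_\rho$. Since $E^{(R)}\supset\pi_\rho^{-1}(E_\rho)$, we get, for every $\omega$ and every $\rho$,
\[
        \1_{E^{(R)}}\bigl((S^{(R)})^{a_n}\omega\bigr)\ge \1_{E_\rho}\bigl(S_\rho^{a_n}\pi_\rho\omega\bigr).
\]
Summing over $n\le K$ and taking the maximum over $K\in\mathcal K$ shows that the large-value set for $E^{(R)}$ contains $\bigcup_{\rho}\pi_\rho^{-1}(G_\rho)$. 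Here
\[
        G_\rho=\Bigl\{\omega_\rho: \max_{K\in\mathcal K}\tfrac{h}{K\Lambda_q(K)}\textstyle\sum_{n=1}^K\1_{E_\rho}(S_\rho^{a_n}\omega_\rho)>1\Bigr\},
\]
which is measurable because it is a finite Boolean combination of measurable sets. The complement of the union is the product set $\prod_\rho(\Omega_\rho\setminus G_\rho)$, whose measure is $\prod_\rho(1-\nu_\rho(G_\rho))\le(1-p)^R$. This gives the bound $1-(1-p)^R$.

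For the final remark, take $R=\lceil p^{-1}\rceil$, so that $Rp\ge1$. Then $(1-p)^R\le \e^{-pR}\le \e^{-1}$.

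There is no serious obstacle; the only point needing care is the independence step. The sets $\pi_\rho^{-1}(\Omega_\rho\setminus G_\rho)$ depend on distinct coordinates, so the product measure factorizes over them. In addition, $S^{(R)}$ is the product map, so each coordinate orbit is exactly the orbit of $S_\rho$, and $S^{(R)}$ is measure-preserving for the product measure.
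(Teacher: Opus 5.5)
Your proposal is correct and follows essentially the same route as the paper. Both use the union bound for $\nu^{(R)}(E^{(R)})$, the coordinatewise domination $\1_{E^{(R)}}\circ (S^{(R)})^{a_n}\ge \1_{E_\rho}\circ S_\rho^{a_n}\circ\pi_\rho$, product independence to get measure at least $1-\prod_\rho(1-\nu_\rho(G_\rho))\ge 1-(1-p)^R$, and finally $(1-p)^R\le \mathrm e^{-pR}\le \mathrm e^{-1}$.
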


\begin{proof}
The estimate for $E^{(R)}$ follows from the union bound.  Let $G_\rho$ denote the large-value event in the $\rho$-th coordinate.
Product independence implies 
\[
        \nu^{(R)}  \left(\bigcup_{\rho=1}^R\pi_\rho^{-1}(G_\rho) \right) = 1-\prod_{\rho=1}^R\bigl(1-\nu_\rho(G_\rho)\bigr) \ge  1-(1-p)^R.
\]
If $\omega\in\pi_\rho^{-1}(G_\rho)$, we have 
\[
        \1_{E^{(R)}}((S^{(R)})^{a_n}\omega) \ge \1_{E_\rho}(S_\rho^{a_n}\omega_\rho), \quad n\ge 1.
\]
Hence the product large-value event contains $\bigcup_{\rho=1}^R\pi_\rho^{-1}(G_\rho)$.  Finally, we have
\[
        (1-p)^R\le\exp(-pR)\le\mathrm e^{-1}
\]
when $R=\lceil p^{-1}\rceil$.
\end{proof}

\begin{prop}\label{prop:exponential-stages}
Let $q\in\N$ and $\mathbf a=(k^n)_{n\ge 1}$, where $k\ge 2$.  For every  Young function $\Phi$ satisfying
\[
        \Phi(t)=o\bigl(tL_q(t)\bigr) \quad (t\to \infty),
\]
there exists an $(\mathbf a,q,\Phi)$-admissible sequence of finite stages.
\end{prop}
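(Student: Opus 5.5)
The plan is to build, for each $j$, a finite stage on a finite-dimensional torus. Each stage combines three earlier results: Lemma~\ref{lem:exp-weight-mass} for the choice of scales, Lemma~\ref{lem:exp-small-block} for a single rotation, and Lemma~\ref{lem:independent-amplification} to boost the success probability to a fixed constant. Set
\[
w_t=\frac{1}{\Lambda_q(2^t)},\qquad \rho_\Phi(s)=\frac{\Phi(s)}{s(1+L_q(s))}\to 0,
\]
and fix a summable positive sequence $(\varepsilon_j)$.

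Lemma~\ref{lem:exp-weight-mass} provides scales $y_i\to\infty$ such that
\[
l_{y_i}(w)\gtrsim_{k,q}1+L_q(2^{y_i}),\qquad \min U_{y_i}(w)\to\infty .
\]
Each $U_{y_i}(w)$ is finite, because $w_t$ decreases to $0$ and so the lower cutoff $D_kn_0(y)2^{-y}<w_t$ excludes all large $t$. For a given $j$, I pick $y=y_{i(j)}$ large enough that five things hold: Lemma~\ref{lem:exp-small-block} applies at scale $y$; $\min U_y(w)\ge j$; and three cost conditions fixed below. Applying Lemma~\ref{lem:exp-small-block} with $U=U_y(w)$ gives a rotation $T_\alpha$ and a set $F_y$ with $\lambda(F_y)\le C_k2^{-2y}$. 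Its large-value set satisfies
\[
\lambda(G_y)\ge p:=c_k\min\{1,2^{-y}l_y(w)\}\gtrsim_{k,q}\min\bigl\{1,\,2^{-y}(1+L_q(2^y))\bigr\}.
\]
On $G_y$ the defining inequality reads
\[
\max_{K\in\mathcal K}\frac{2^y}{K\Lambda_q(K)}\sum_{n=1}^K\1_{F_y}(T_\alpha^{k^n}x)>1,\qquad \mathcal K=\{2^t:t\in U_y(w)\}.
\]
This is exactly the hypothesis of Lemma~\ref{lem:independent-amplification} with $h=2^y$.

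Next, take $R=\lceil p^{-1}\rceil$ independent copies of $(\mathbb T,\lambda,T_\alpha,F_y)$. The product $\Omega_j=\mathbb T^R$ is a compact metric space carrying an invertible measure-preserving rotation $S_j$. Let $E_j=E^{(R)}$, $H_j=j2^y$, $A_j=j$ and $\mathcal K_j=\mathcal K$. Lemma~\ref{lem:independent-amplification} gives the large-value inequality at level $1$ with weight $2^y$ on a set of measure at least $1-\e^{-1}$. Multiplying both sides by $j$ turns this into the inequality defining $\mathcal G(\mathfrak s_j)$ with $H_j,A_j$, so \eqref{eq:finite-stage-success} holds with $c_0=1-\e^{-1}$. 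Condition \eqref{eq:finite-stage-escape} also holds, since $A_j=j\to\infty$ and $\min\mathcal K_j=2^{\min U_y}\ge 2^j$.

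The step needing care is the summability \eqref{eq:finite-stage-summability}, since this is where the endpoint $tL_q(t)$ enters. Two measure bounds drive it. By the union bound,
\[
\nu_j(E_j)\le RC_k2^{-2y}\lesssim_{k,q}\frac{2^{-y}}{1+L_q(2^y)}.
\]
If $p$ is capped at $c_k$, then $R=O(1)$ and the bound $\nu_j(E_j)\lesssim_k 2^{-2y}$ is even better. Writing $s=2^jH_j=j2^j2^y$, we get
\[
2^{-j}\nu_j(E_j)\Phi(s)\lesssim_{k,q} j\,\rho_\Phi(s)\,\frac{1+L_q(j2^j2^y)}{1+L_q(2^y)}.
\]
For fixed $j$ the ratio of the two logarithmic factors stays bounded as $y\to\infty$, by Lemma~\ref{lem:log-calc}\ref{lem:log-calc:3}, while $\rho_\Phi(s)\to0$. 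So I choose $y$ so large that the right side is at most $\varepsilon_j$.

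The main obstacle I expect is keeping this bookkeeping consistent. The three competing requirements are these: the amplification factor $R\approx 2^y/(1+L_q(2^y))$ must exactly cancel the loss in $p$; the height $H_j$ must absorb the factor $j$ needed for $A_j\to\infty$; and the $y$-dependence of all these choices must be checked against the uniformity claimed in Lemma~\ref{lem:exp-small-block} ("$y$ sufficiently large" independent of $U$). Once those choices are in place, the three conditions of Definition~\ref{def:finite-stage-hypotheses} hold, and the sequence $(\mathfrak s_j)$ is $(\mathbf a,q,\Phi)$-admissible.
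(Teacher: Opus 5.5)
Your proposal is correct and follows essentially the same route as the paper: scales from Lemma~\ref{lem:exp-weight-mass}, a single rotation from Lemma~\ref{lem:exp-small-block}, amplification over $R=\lceil p^{-1}\rceil$ product copies via Lemma~\ref{lem:independent-amplification}, the choices $H_j=j2^y$, $A_j=j$, $\mathcal K_j=\{2^t:t\in U_y(w)\}$, and the cost bound $j\,\rho_\Phi(s)\frac{1+L_q(s)}{1+L_q(2^y)}\to 0$ for fixed $j$. The only point you leave implicit is $\nu_j(E_j)>0$, which the definition of a finite stage requires. It follows at once because the large-value set has positive measure, and the paper records this check explicitly.
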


\begin{proof}
Set
\[
        \rho_\Phi(s)=\frac{\Phi(s)} {s\bigl(1+L_q(s)\bigr)}\ \text{ and } \ w_t=\frac{1}{\Lambda_q(2^t)}.
\]
Then $\rho_\Phi(s)\to 0$ as $s\to \infty$.
By Lemma~\ref{lem:exp-weight-mass}, there exists an unbounded set $\mathcal Y\subset \N$ such that
\[
        l_y(w)\gtrsim_{k,q} 1+L_q(2^y), \quad y\in\mathcal Y,
\]
and $\min   U_{y}(w)\to \infty $ as $y\to\infty$ along $\mathcal Y$, where $ U_y(w)= \left\{ t\ge 2: D_k n_0(y)2^{-y}<w_t<2^{t-y} \right\}$.  
For $y\in\mathcal Y$, write $U_y=U_y(w)$, then 
\begin{equation}\label{equation:ly-lower-bound}
   \sum_{t\in U_y}w_t= l_y(w)\gtrsim_{k,q} 1+L_q(2^y).
\end{equation}
Since $w_t\to 0$, the lower inequality in the definition of $U_y$
fails for all sufficiently large $t$.  Hence $U_y$ is finite, and it is nonempty by \eqref{equation:ly-lower-bound}.

After removing finitely many elements from $\mathcal Y$, we may apply Lemma~\ref{lem:exp-small-block} for every $y\in\mathcal Y$.  Thus we obtain $\alpha_y\in[0,1)$ and a Borel set $F_y\subset\mathbb T$ such that
\[
        \lambda(F_y)\le C_k 2^{-2y}
\]
and for the rotation $T_{\alpha_y} x=x-{\alpha_y} \pmod1$, 
\[
    \lambda\left\{ x\in\mathbb T: \max_{t\in U_y} \frac{2^yw_t}{2^t} \sum_{n=1}^{2^t} \1_{F_y}(T_{\alpha_y}^{k^n}x)>1 \right\}
         \ge c_k \min\left\{ 1,\, 2^{-y}\sum_{t\in U_y}w_t \right\}
\]
for constants $c_k, C_k>0$. By \eqref{equation:ly-lower-bound}, there is $c'_{k,q}>0$ such that
\[
        2^{-y}\sum_{t\in U_y}w_t\ge  c'_{k,q}2^{-y}\bigl(1+L_q(2^y)\bigr).
\]
Since $2^{-y}\bigl(1+L_q(2^y)\bigr)\to 0$, we may further discard finitely many elements of $\mathcal Y$ and
assume that
\[
        c'_{k,q}2^{-y}\bigl(1+L_q(2^y)\bigr)\le1.
\]
Set $c_{k,q}=\min\{c_k,1\}c'_{k,q}$.
It follows that the preceding large-value set has measure at least
\[
        p_y  = c_{k,q}2^{-y}\bigl(1+L_q(2^y)\bigr)
\]
with  $0<p_y\le 1$. Put $R_y=\left\lceil p_y^{-1}\right\rceil$. Then
\[
        R_y \le 2 p_y^{-1}  \lesssim_{k,q} \frac{2^y}{1+L_q(2^y)}.
\]
Define
\[
\Omega_y=\mathbb T^{R_y}, \quad \B_y=\B_{\mathbb T}^{\otimes R_y},  \quad  \nu_y=\lambda^{R_y}, \ \ \text{ and } \ \  S_y=T_{\alpha_y}^{\times R_y}.
\]
Equipped with the product metric, $\Omega_y$ is a metric space and $\B_y$ is its Borel $\sigma$-algebra.  
Moreover, $S_y$ is an invertible $\nu_y$-preserving transformation.   

Let $\pi_\rho:\Omega_y\to\mathbb T$ denote the coordinate maps. Set
\[
        E_y  = \bigcup_{\rho=1}^{R_y}\pi_\rho^{-1}(F_y), \ \text{ and } \ \mathcal K_y=\{2^t:t\in U_y\}.
\]
Applying Lemma~\ref{lem:independent-amplification} with $h=2^y$, with
the same system and testing set in each coordinate, yields 
\[
   \nu_y\left\{ \omega\in\Omega_y: \max_{K\in\mathcal K_y} \frac{2^y}{K\Lambda_q(K)} \sum_{n=1}^{K} \1_{E_y}(S_y^{k^n}\omega)>1 \right\}
       \ge 1-\mathrm e^{-1},
\]
and 
\[
        \nu_y(E_y) \le  R_y\lambda(F_y) \lesssim_{k,q} \frac{2^{-y}}{1+L_q(2^y)}.
\]
In particular, $\nu_y(E_y)>0$. Otherwise measure preservation would force every finite sum in the preceding maximal expression to vanish
almost everywhere, contradicting the positive measure of its large-value set.

Fix $j\in\N$.  For $y\in\mathcal Y$, set $H_{j,y}=j2^y$ and  $A_{j,y}=j$, and consider
\[
        \mathfrak s_{j,y} =\bigl(  \Omega_y,\B_y,\nu_y,S_y, E_y,H_{j,y},A_{j,y},\mathcal K_y \bigr).
\]
Since $K=2^t$, $w_t=\Lambda_q(2^t)^{-1}$ and $H_{j,y}/A_{j,y}=2^y$, we have
\[
        \frac{H_{j,y}}{A_{j,y}K\Lambda_q(K)} = \frac{2^y}{2^t\Lambda_q(2^t)} =\frac{2^yw_t}{2^t}.
\]
Hence the preceding large-value event is precisely $\mathcal G(\mathfrak s_{j,y})$, as defined in Definition \ref{def:finite-stage-hypotheses}.  Therefore
\[
        \nu_y\bigl(\mathcal G(\mathfrak s_{j,y})\bigr) \ge 1-\mathrm e^{-1}.
\]

Put $s_{j,y} = 2^jH_{j,y} = 2^jj2^y$. By Lemma~\ref{lem:log-calc}\ref{lem:log-calc:3}, for every fixed $j$ and $q$,
\[
        1+L_q(s_{j,y})  \lesssim_{j,q}  1+L_q(2^y).
\]
Consequently, we have
\[
\begin{aligned}
        2^{-j}\nu_y(E_y)\Phi(2^jH_{j,y})
        &\lesssim_{k,q} 2^{-j}\frac{2^{-y}}{1+L_q(2^y)}\Phi(s_{j,y})      \\
        &= j\,\rho_\Phi(s_{j,y}) \frac{1+L_q(s_{j,y})}{1+L_q(2^y)}      \lesssim_{j,k,q} \rho_\Phi(s_{j,y}).
\end{aligned}
\]
For fixed $j$, we have $s_{j,y}\to \infty$ as $y\to \infty$ along $\mathcal Y$.  Hence
\[
        2^{-j}\nu_y(E_y)\Phi(2^jH_{j,y})\to 0 \quad  \text{as }y\to \infty,\quad y\in\mathcal Y.
\]
We may therefore choose $y_j\in\mathcal Y$ inductively so that for  every $j\ge 1$, $\min U_{y_j}\ge j$ and
\[
        2^{-j}\nu_{y_j}(E_{y_j})\Phi(2^jH_{j,y_j}) \le 2^{-2j},
\]
while $ y_j>y_{j-1}$ for $j\ge 2$. Write
\[
\begin{gathered}
          \Omega_j=\Omega_{y_j}, \quad  \B_{\Omega_j}=\B_{y_j}, \quad \nu_j=\nu_{y_j},  \quad S_j=S_{y_j},      \\
          E_j=E_{y_j},\quad    H_j=j2^{y_j}, \quad     A_j=j, \quad   \mathcal K_j=\mathcal K_{y_j},
\end{gathered}
\]
and define
\[
        \mathfrak s_j  = \bigl(\Omega_j,\B_{\Omega_j},\nu_j,S_j,E_j,H_j,A_j,\mathcal K_j\bigr).
\]
Since $U_{y_j}$ is finite and nonempty, so is $\mathcal K_j$. Moreover, $E_j\in\B_{\Omega_j}$,
$0<\nu_j(E_j)\le1$  and $H_j,A_j>0$. Thus each $\mathfrak s_j$ is a finite stage.

We now verify the three $(\mathbf a,q,\Phi)$-admissibility conditions.  First,
\[
        A_j=j\to  \infty, \ \text{ and }\  \min\mathcal K_j = 2^{\min U_{y_j}} \ge 2^j \to  \infty.
\]
Thus \eqref{eq:finite-stage-escape} holds.  Next,
\[
        \nu_j\bigl(\mathcal G(\mathfrak s_j)\bigr) \ge 1-\mathrm e^{-1},  \quad j\in\N,
\]
so \eqref{eq:finite-stage-success} holds with $c_0=1-\mathrm e^{-1}$. Finally,
\[
        \sum_{j=1}^{\infty} 2^{-j}\nu_j(E_j)\Phi(2^jH_j)\le \sum_{j=1}^{\infty}2^{-2j} < \infty.
\]
This proves \eqref{eq:finite-stage-summability}.  Hence $(\mathfrak s_j)_{j\ge 1}$ is an $(\mathbf a,q,\Phi)$-admissible sequence of finite stages.
\end{proof}

Now we are ready to give the proof of Theorem~\ref{thm:universal-sparse}\ref{thm:universal-sparse:2}.

\begin{proof}[Proof of Theorem~\ref{thm:universal-sparse}\ref{thm:universal-sparse:2}]
Suppose first that $\mathbf a=(a_n)$ has polynomial ratio separation. Proposition~\ref{prop:poly-ratio-stages} shows that, for every
 Young function $\Phi$ satisfying
\[
        \Phi(t)=o\bigl(tL_q(t)\bigr) \quad (t\to \infty),
\]
there exists an $(\mathbf a,q,\Phi)$-admissible sequence of finite stages.  Corollary~\ref{cor:finite-stage-sharp} therefore implies that
$ L\log_{q}L$ is the sharp endpoint for the local observation problem associated with $(\mathbf a,\mathbf D^{(q)})$.  
This proves \ref{thm:universal-sparse}\ref{thm:universal-sparse:2}\ref{thm:universal-sparse:2:1}.

Suppose now that $a_n=k^n$ for some $k\ge 2$. Proposition~\ref{prop:exponential-stages} verifies the same hypothesis of Corollary~\ref{cor:finite-stage-sharp}.  The asserted endpoint sharpness follows, proving \ref{thm:universal-sparse}\ref{thm:universal-sparse:2}\ref{thm:universal-sparse:2:2}.
\end{proof}

\end{document}